\documentclass{article}
\usepackage{amsmath,amsthm,amssymb,amsfonts,bbm,color}
\usepackage{enumerate}
\usepackage{young}
\usepackage{graphicx}
\usepackage{epstopdf}
\usepackage{float}
\usepackage[mathscr]{eucal}
\usepackage{microtype}
\usepackage{hyperref}
\usepackage{pslatex}
\parindent=0pt

\setlength{\topmargin}{0in} \setlength{\leftmargin}{0in}
\setlength{\rightmargin}{0in} \setlength{\evensidemargin}{0in}
\setlength{\oddsidemargin}{0in}

\setlength{\textwidth}{6.5in} \setlength{\textheight}{8.4in}

\newtheorem{theorem}{Theorem}[section]
\newtheorem{corollary}[theorem]{Corollary}
\newtheorem{lemma}[theorem]{Lemma}
\newtheorem{remark}[theorem]{Remark}

\newtheorem{proposition}[theorem]{Proposition}
\newtheorem{fact}[theorem]{Fact}

\newcommand\bin{\operatorname{Bin}}
\newcommand\inv{\operatorname{Inv}}

\newcommand{\p}{\mathbb{P}}
\renewcommand{\P}{\mathbb{P}}
\newcommand{\E}{\mathbb{E}}

\newcommand\LIS{\operatorname{LIS}}
\newcommand\LDS{\operatorname{LDS}}
\newcommand\var{\operatorname{Var}}

\newcommand\txtred[1]{{\color{black}#1}}

\begin{document}

\title{Lengths of Monotone Subsequences in a Mallows Permutation}
\author{Nayantara Bhatnagar\thanks{Department of Mathematical Sciences, University of Delaware, USA, Email: {\tt naya@math.udel.edu}. Supported by NSF grant DMS-1261010.} \and Ron Peled\thanks{School of Mathematical Sciences, Tel Aviv University, Tel Aviv, Israel. E-mail: {\tt peledron@post.tau.ac.il}. Supported by an ISF grant and an IRG grant.}}

\maketitle

\begin{abstract}

We study the length of the longest increasing and longest decreasing
subsequences of random permutations drawn from the Mallows measure.
Under this measure, the probability of a permutation $\pi \in S_n$
is proportional to $q^{\inv(\pi)}$ where $q$ is a real parameter and
$\inv(\pi)$ is the number of inversions in $\pi$. The case $q=1$
corresponds to uniformly random permutations. The Mallows measure
was introduced by Mallows in connection with ranking problems in
statistics.

We determine the typical order of magnitude of the lengths of the
longest increasing and decreasing subsequences, as well as large
deviation bounds for them. We also provide a simple bound on the
variance of these lengths, and prove a law of large numbers for the
length of the longest increasing subsequence. Assuming without loss
of generality that $q<1$, our results apply when $q$ is a function
of $n$ satisfying $n(1-q) \to \infty$. The case that $n(1-q)=O(1)$
was considered previously by Mueller and Starr. In our parameter
range, the typical length of the longest increasing subsequence is
of order $n\sqrt{1-q}$, whereas the typical length of the longest
decreasing subsequence has four possible behaviors according to the
precise dependence of $n$ and $q$.

We show also that in the graphical representation of a
Mallows-distributed permutation, most points are found in a
symmetric strip around the diagonal whose width is of order
$1/(1-q)$. This suggests a connection between the longest increasing
subsequence in the Mallows model and the model of last passage
percolation in a strip.

\end{abstract}

\section{Introduction}
The length of the longest increasing subsequence of a uniformly
random permutation has attracted the attention of researchers from
several areas with significant  contributions from Hammersley
\cite{Ham72}, Logan and Shepp \cite{LogShe77} Vershik and Kerov
\cite{VerKer77}, Aldous and Diaconis \cite{AldDia99} and culminating
with the breakthrough work of Baik, Deift and Johansson
\cite{BaiDeiJoh99} who related this length to the theory of random
matrices and proved that it has a Tracy-Widom limiting distribution.
In this work we study the lengths of monotone subsequences
(increasing or decreasing) of a random permutation having a
different probability law, introduced by Mallows in \cite{Mal57} in order to study the statistical properties of non-uniformly random permutations (see also \cite{DiaRam00} and references therein for more background). The
Mallows distribution is parameterized by a number $q>0$, with the
probability of a permutation $\pi$ proportional to $q^{\inv(\pi)}$,
where $\inv(\pi)$ is the number of inversions in $\pi$, or pairs of
elements of $\pi$ which are out of order.

For $q>0$ and integer $n\ge 1$, the $(n,q)$-Mallows measure over
permutations in $S_n$ is given by
\begin{equation}\label{eq:mu_n_q_def}
\mu_{n,q}(\pi) := \frac{q^{\inv(\pi)}}{Z_{n,q}},
\end{equation}
where
\begin{equation*}
  \inv(\pi) := |\{(i,j)\,:\,\text{$i<j$ and $\pi(i)>\pi(j)$}\}|
\end{equation*}
denotes the number of inversions in $\pi$, and $Z_{n,q}$ is a
normalizing constant, given explicitly by the following well-known
formula \cite[pg.~21]{Sta97} (see also the remark after
Lemma~\ref{lem:Mallows_process_distribution} below)
\begin{equation}\label{eq:Z_formula}
Z_{n,q} = \displaystyle\prod_{i=1}^{n}\frac{1-q^{i}}{1-q}.
\end{equation}
Let $I=(i_1,\ldots,i_m)$ be an increasing sequence of indices. We
say $I$ is an {\em increasing subsequence} of a permutation $\pi$ if
$\pi(i_{k+1})>\pi(i_k)$ for $1\le k\le m-1$. Define a {\em
decreasing
  subsequence} analogously. Denote by $\LIS(\pi)$ the maximal length of an increasing
subsequence in $\pi$. That is,
\begin{equation*}
  \LIS(\pi) = \max\{m\,:\, \exists\, i_1<\cdots<i_m \text{
  satisfying }\pi(i_1)<\cdots<\pi(i_m)\}.
\end{equation*}
Analogously define $\LDS(\pi)$ to be the maximal length of a
decreasing subsequence in $\pi$. That is,
\begin{equation*}
  \LDS(\pi) = \max\{m\,:\, \exists\, i_1<\cdots<i_m \text{
  satisfying }\pi(i_1)>\cdots>\pi(i_m)\}.
\end{equation*}

Our goal is to investigate the distribution of $\LIS(\pi)$ and
$\LDS(\pi)$ when $\pi$ is randomly sampled from the Mallows measure.
We mention that the asymptotics of these lengths for other non-uniform distributions have been
considered in the literature previously. 
For instance, Baik and Rains \cite{BaiRai01} study the longest increasing and decreasing subsequences of random permutations satisfying certain symmetry conditions such as uniformly chosen involutions.
F\'{e}ray and M\'{e}liot
\cite{FerMel12} studied a
distribution similar to \eqref{eq:mu_n_q_def}, but with $\inv$
replaced by another permutation statistic, the {\em major index}. Fulman \cite{Ful02} relates the longest increasing subsequence in this major index distribution to the study of eigenvalues of random matrices over finite fields, analogously to the relation of the longest increasing subsequence of a uniform permutation with random Hermitian matrices. In
addition, $\LIS(\pi)$ and $\LDS(\pi)$ have been studied for the
Mallows distribution itself, by Mueller and Starr \cite{MueSta11}, as
detailed below.

We focus our investigations on the Mallows measure with $q<1$. This
restriction can be made without loss of generality since there is a
duality between the measures $\mu_{n,q}$ and $\mu_{n,1/q}$. Indeed,
if $\pi\sim\mu_{n,q}$ then its reversal $\pi^R$, defined by
$\pi^R(i) := \pi(n+1-i)$, is distributed as $\mu_{n,1/q}$ (see
Lemma~\ref{lem:reversal_inverse_Mallows} below). In particular,
$\LIS(\pi)$ is distributed as $\LDS(\pi^R)$. It is natural to allow
$q$ to be a function of $n$. Mueller and Starr \cite{MueSta11} studied
the regime where $n(1-q)$ tends to a finite limit $\beta$. They
showed that $\LIS(\pi) / \sqrt{n}$ converges in probability to
$\ell(\beta)$, where $\ell(\beta)$ is an explicitly given function
of $\beta$ satisfying $\ell(0)=2$ (see Theorem~\ref{thm:MS-wlln}
for the precise statement), thus extending the results of \cite{AldDia99,LogShe77,VerKer77}.
This implies an analogous result for $\LDS(\pi)$ by the
above-mentioned duality. Thus, in this limiting sense, in the regime
where $n(1-q)$ tends to a finite constant as $n$ tends to infinity,
$\LIS(\pi)$ and $\LDS(\pi)$ have the same order of magnitude as for
a uniformly random permutation, with a different leading constant.
In this paper we complete this picture by considering the case that
$n(1-q)$ tends to infinity with $n$. We find the typical order of
magnitude of $\LIS(\pi)$ and $\LDS(\pi)$ (which now differ from the
uniformly random case) and establish large deviation results for these
lengths and a law of large numbers for $\LIS(\pi)$. We also prove a
simple bound on the variance of $\LIS(\pi)$ and $\LDS(\pi)$.

Our first result concerns the displacement $|\pi(i)-i|$ of an
element in a random Mallows permutation. The result gives bounds on
the tails of this displacement. This theorem is not used later in
our analysis of monotone subsequences of random Mallows permutations
but it is useful in developing intuition for their behavior. \txtred{The
upper bound follows by methods of Braverman and Mossel \cite[Lemma
17]{BraMos09} as well as Gnedin and Olshanski \cite[Remark 5.2]{GneOls12}.
In \cite{GneOls12}, the authors studied a model of random
permutations of the infinite group of integers $\mathbb{Z}$ which is
obtained as a limit of the Mallows model, and obtained precise
formulas for the distribution of displacements in this limiting
model.}
%
%
\begin{figure}[t]
\centering
\includegraphics[width=5.5cm]{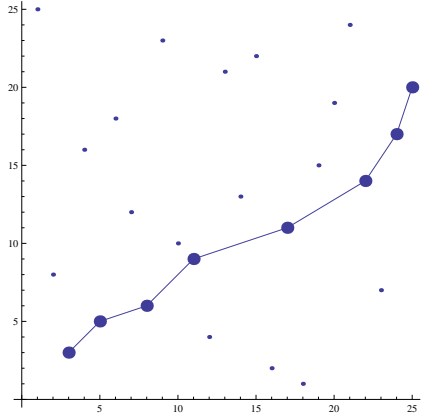}
\caption{An increasing piecewise linear curve corresponding to a
longest increasing subsequence.} \label{fig:mallows-points-25}
\end{figure}

\begin{figure}[t]
\center
\includegraphics[width=5cm]{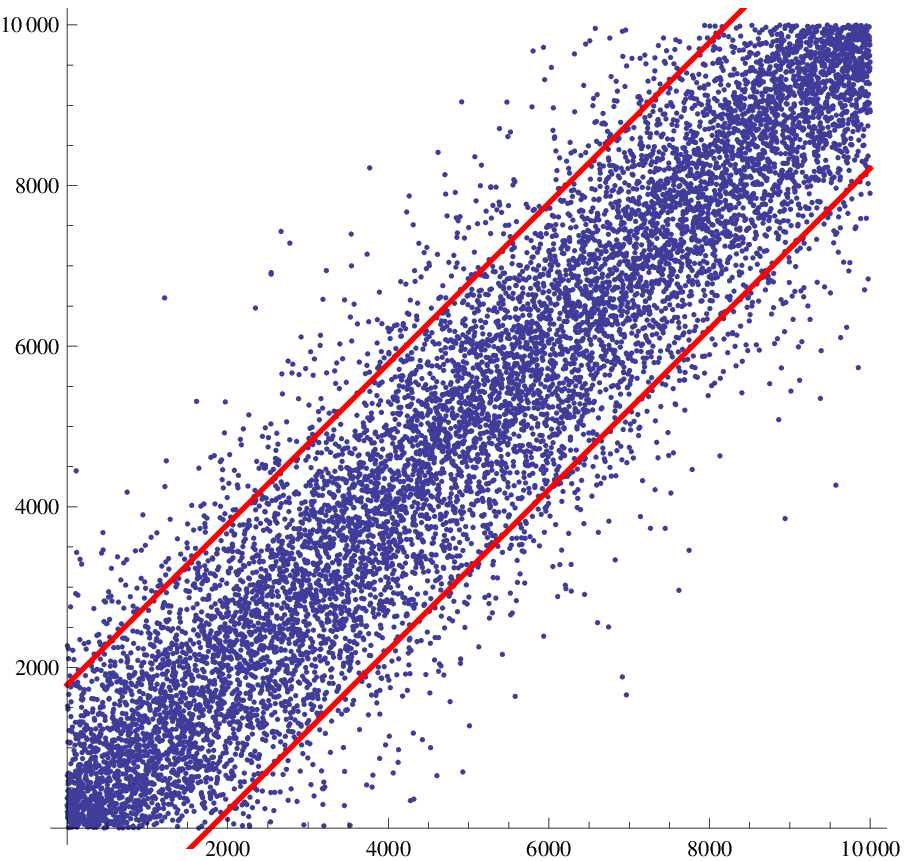}\hspace{0.5cm}
\includegraphics[width=5cm]{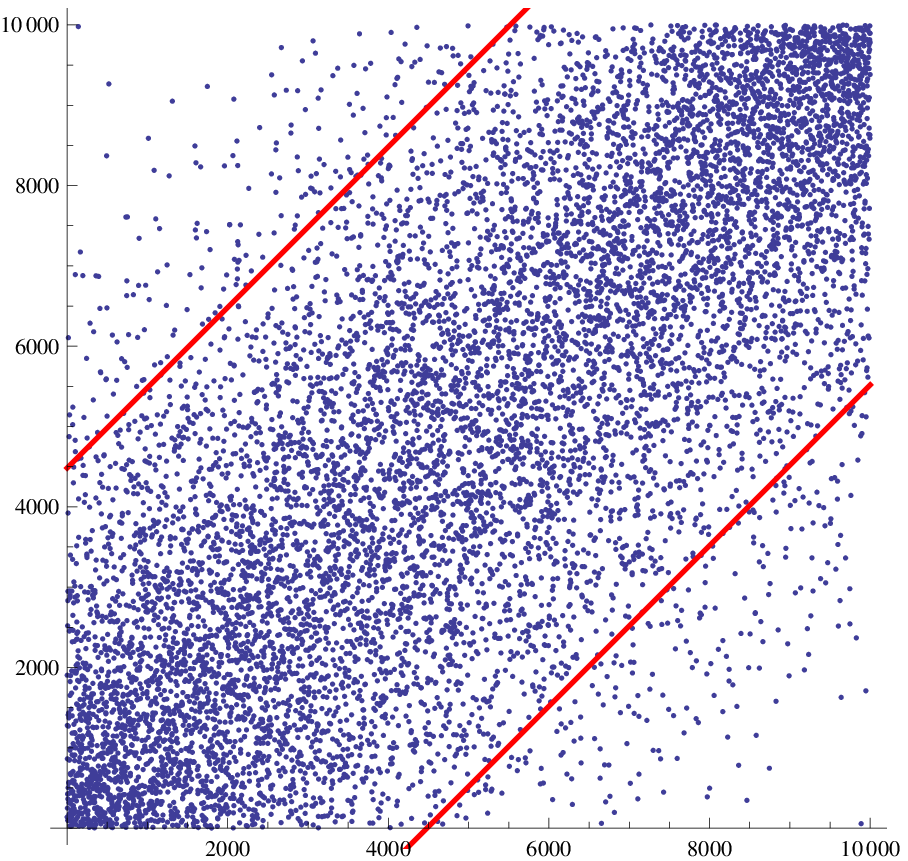}\hspace{0.5cm}
\includegraphics[width=5cm]{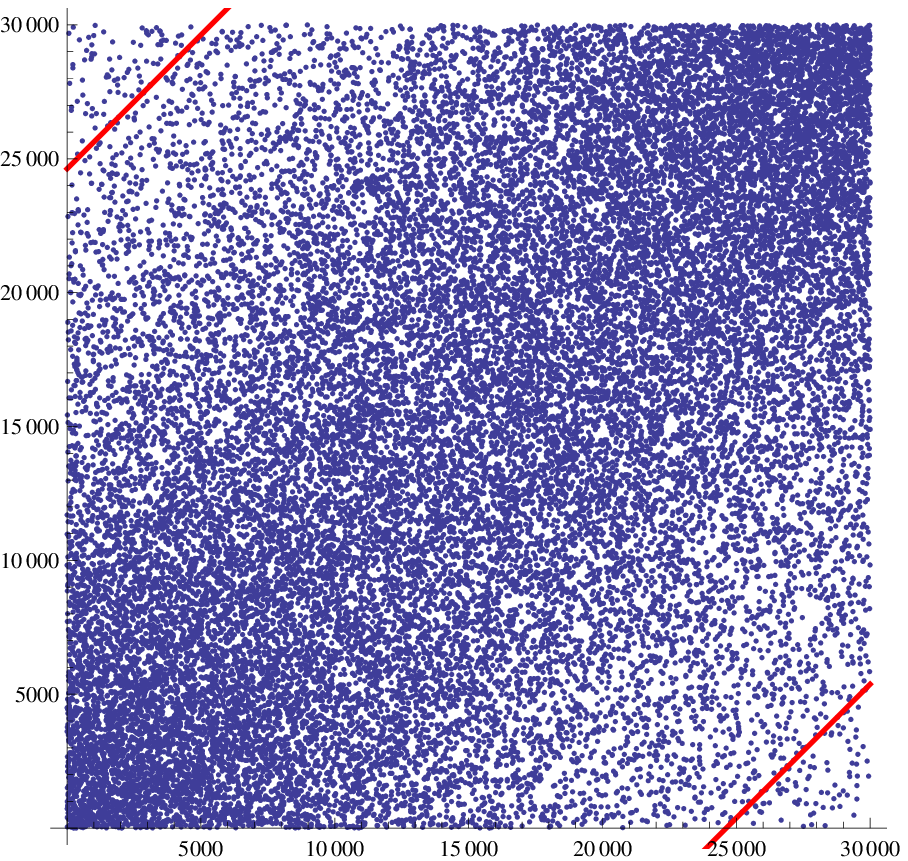}
\caption{Graphical representation for random Mallows-distributed
permutations with $1-q=n^{-0.7}, \ n^{-0.8}$
  and $n^{-0.88}$. The diagonal lines delineate a symmetric strip with width proportional to $\frac{1}{1-q}$. Theorem~\ref{thm:displacement} shows that most points of the permutation must lie in such a strip.}
\label{fig:mallows-points}
\end{figure}
\begin{theorem}\label{thm:displacement} For all $0<q<1$, and integer $n\ge 1$, $1\le i\le n$ and $t\ge 1$, if $\pi \sim
\mu_{n,q}$ then
\begin{equation}\label{eq:displacement_probability_bound}
  \P(|\pi(i)-i|\ge t)\le 2q^t,
\end{equation}
and
\begin{equation}\label{eq:required_displacement_bound}
  c \min \left( \frac{q}{1-q}, n-1\right) \le \E|\pi(i) - i| \le \min \left( \frac{2q}{1-q},n-1 \right)
\end{equation}
for some absolute constant $c>0$. In addition, if $n\ge 3$ and $1\le
t\le \frac{n+5}{8}$ then
  \begin{equation*}
    \P(|\pi(i) - i|\ge t)\ge\frac{1}{2}q^{2t-1}.
  \end{equation*}
\end{theorem}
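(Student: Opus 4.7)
The theorem has three parts: the upper tail bound~\eqref{eq:displacement_probability_bound}, the two-sided estimates on $\E|\pi(i)-i|$ in~\eqref{eq:required_displacement_bound}, and the lower tail bound. Both expectation bounds follow from the tail estimates by summing $\E|\pi(i)-i|=\sum_{t\ge 1}\P(|\pi(i)-i|\ge t)$ and invoking the deterministic bound $|\pi(i)-i|\le n-1$, so the core work is to prove the two tail bounds.

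For the upper tail, my plan is to use the sequential construction of $\pi\sim\mu_{n,q}$ via independent truncated geometric variables $R_1,\ldots,R_n$ (with $R_i$ supported on $\{1,\ldots,n-i+1\}$ and $\P(R_i=r)=q^{r-1}(1-q)/(1-q^{n-i+1})$), where $\pi(i)$ is defined as the $R_i$-th smallest element of $\{1,\ldots,n\}\setminus\{\pi(1),\ldots,\pi(i-1)\}$. Since positions $1,\ldots,i-1$ absorb at most $i-1$ values from $\{1,\ldots,i+t-1\}$, at least $t$ such values remain available at position $i$, so $\pi(i)\ge i+t$ forces $R_i\ge t+1$ and hence $\P(\pi(i)\ge i+t)\le\P(R_i\ge t+1)\le q^t$. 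The reflection $\tilde\pi(i):=n+1-\pi(n+1-i)$ preserves $\inv$ and thus the Mallows law, and interchanges the two tails at positions $i$ and $n+1-i$; combining the two directions gives $\P(|\pi(i)-i|\ge t)\le 2q^t$.

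For the lower tail, the plan is an injection argument. For each $\pi$ with $\pi(i)=v\in[t+1,n-t]$, define $\phi(\pi)$ by swapping the two values $v$ and $v+t$ in $\pi$ when $v\ge i$, and swapping $v$ and $v-t$ when $v<i$. The image then satisfies $|\phi(\pi)(i)-i|\ge t$, and since the sign of $\phi(\pi)(i)-i$ identifies the direction of the swap, $\phi$ is injective. An elementary bookkeeping of inversions under the swap of two values $v_1<v_2$ at positions $a<b$ gives a change of $1+2|\{j:a<j<b,\pi(j)\in(v_1,v_2)\}|\le 2t-1$ (since $v_2-v_1=t$ leaves at most $t-1$ intermediate values), while the opposite position ordering gives a strict decrease in $\inv$. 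Summing yields
\[
\P(|\pi(i)-i|\ge t)\ge q^{2t-1}\,\P(\pi(i)\in[t+1,n-t]).
\]

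The main obstacle is then to establish $\P(\pi(i)\in[t+1,n-t])\ge 1/2$, which I plan to handle by a dichotomy. Either this probability is at least $1/2$ and the injection bound finishes the argument, or it is less than $1/2$, in which case $\P(\pi(i)\le t)+\P(\pi(i)\ge n-t+1)>1/2$; and for $i$ in the bulk range $[2t,n-2t+1]$ each of these two events already implies $|\pi(i)-i|\ge t$, so that $\P(|\pi(i)-i|\ge t)>1/2\ge\tfrac12 q^{2t-1}$. The hypothesis $t\le(n+5)/8$ enters here precisely to ensure this bulk range is substantial; the remaining boundary positions $i<2t$ or $i>n-2t+1$ (including $i=1$ and $i=n$) are handled by direct computation from the sequential construction, for instance $\P(\pi(1)\ge 1+t)=(q^t-q^n)/(1-q^n)\ge q^t(n-t)/n\ge q^t/2\ge\tfrac12 q^{2t-1}$ using the inequality $(1-q^{n-t})/(1-q^n)\ge(n-t)/n$ together with the hypothesis on $t$. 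The lower bound on $\E|\pi(i)-i|$ in~\eqref{eq:required_displacement_bound} follows by summing $\tfrac12 q^{2t-1}$ over $1\le t\le\lfloor(n+5)/8\rfloor$ and comparing the resulting geometric series with $\min(q/(1-q),n-1)$.
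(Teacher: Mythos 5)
Your upper-tail argument is correct: the value-insertion construction with independent truncated geometric ranks $R_i$ does generate $\mu_{n,q}$ (since $\inv(\pi)=\sum_i(R_i-1)$), the implication $\{\pi(i)\ge i+t\}\subseteq\{R_i\ge t+1\}$ is valid, and combining with the $180^\circ$ rotation gives \eqref{eq:displacement_probability_bound}; this is essentially the same mechanism as the paper's coupling with the Mallows process. The swap/injection argument is a genuinely different and attractive route to the lower tail, and your bookkeeping is right: it yields $\P(|\pi(i)-i|\ge t)\ge q^{2t-1}\,\P(\pi(i)\in[t+1,n-t])$, and your dichotomy settles all bulk positions $2t\le i\le n-2t+1$.

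The gap is at the boundary positions. Your ``direct computation'' is only exhibited for $i=1$, and it extends (via $\pi(i)\ge R_i$) only to $i\le t$, where $\P(\pi(i)\ge i+t)\ge\P(R_i\ge i+t)\gtrsim q^{i+t-1}\ge q^{2t-1}$. For $t<i<2t$ (and the mirror range $n-2t+1<i<n+1-t$) this fails: $\P(R_i\ge i+t)$ is of order $q^{i+t-1}$ with $i+t-1>2t-1$, and conditioning on the first $i-1$ insertions gives $\P(\pi(i)\ge i+t\mid \mathcal{F}_{i-1})\approx q^{\,i+t-1-m}$ where $m$ is the number of values in $\{1,\dots,i+t-1\}$ already used; to reach $q^{2t-1}$ you must show $m$ is typically close to $i-1$, and in the regime where $q\to1$ with $t\ll 1/(1-q)$ neither the geometric tail bounds \eqref{eq:displacement_probability_bound} nor Jensen-type estimates on $q^{\,i-1-m}$ deliver this (the same obstruction blocks proving $\P(\pi(i)\in[t+1,n-t])\ge\frac12$ there, since the bounds $\P(\pi(i)\le t)\le q^{\,i-t}$, $\P(\pi(i)\ge n-t+1)\le q^{\,n-t+1-i}$ become vacuous as $q\to1$). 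What is needed is an anticoncentration statement uniform in $q$; the paper supplies it by conditioning on all insertion variables except one and using monotonicity (Lemma~\ref{lem:monotonicity_Mallows_process} and Lemma~\ref{lem:displacement_lower_bound_max}): given the others, $\pi(i)$ is monotone in the remaining variable, so at most $2t-1$ contiguous values of it give $|\pi(i)-i|<t$, and since that variable's law is nonincreasing the escape probability is at least $\P\bigl(p_j(j)\ge 2t\bigr)$ with $j=\max(i,n+1-i)\ge\frac{n+1}{2}\ge 2t$, which handles every $i$ at once. Without an idea of this kind your proof does not cover $t<i<2t$. Two minor further points: the lower bound in \eqref{eq:required_displacement_bound} for $n=2$ cannot be obtained by summing a tail bound that requires $n\ge3$ and needs a separate two-line computation, and the comparison of $\sum_{t\le\lfloor (n+5)/8\rfloor}\frac12 q^{2t-1}$ with $c\min\bigl(\frac{q}{1-q},n-1\bigr)$ requires splitting into the cases $q<1-\frac1n$ and $q\ge1-\frac1n$.
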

A permutation $\pi$ in $S_n$ can be naturally associated to a
collection of $n$ points in the square $[1,n]^2$ by placing a point
at $(i,\pi(i))$ for each $i$. In this \emph{graphical
representation}, increasing subsequences correspond to increasing
curves passing through the points (see
Figure~\ref{fig:mallows-points-25}), and decreasing subsequences
correspond to decreasing curves. The graphical representation is
depicted in Figure~\ref{fig:mallows-points} for permutations
simulated from the Mallows distribution $\mu_{n,q}$ for various
choices of $n$ and $q$. The figure illustrates the fact that most
points of the permutation are displaced by less than a constant
times $q / (1-q)$, as Theorem~\ref{thm:displacement} proves.

The previous remark suggests a connection between the study of the
longest increasing subsequence of a random Mallows permutation, and the last passage
percolation model in a strip. In one version of the latter model,
one puts independent and identically distributed random points in a
strip, and studies the last passage time, which is the same as the
longest increasing subsequence when these points are taken to be the
graphical representation of a permutation. In
Section~\ref{sec:open-ques} we mention some works related to the
limiting distribution of the last passage time and raise the
question of whether the same limiting distributions arise also for
the Mallows model.

Our next results concern the typical order of magnitude of
$\LIS(\pi)$ when $\pi$ is sampled from the Mallows distribution. A
heuristic guess for this order of magnitude may be obtained from
Figure~\ref{fig:mallows-points-boxes}. 
Suppose that $\beta/(1-q)$ and $n(1-q)/\beta$ are integers for some large constant $\beta>0$. Consider $n(1-q)/\beta$ disjoint squares of side length $\beta/(1-q)$ along the strip delineated in the figure, such that the bottom left corner of each square equals the top right corner of the preceding square. The figure hints that the distribution of points in each square is close to a sample from the $\mu_{\beta/(1-q),q}$ distribution (here “close” should be interpreted as saying that the box contains a significant subsample of a Mallows distributed permutation of size $\beta/(1-q)$. Theorem \ref{thm:displacement} and the
results in Section~\ref{sec:Mallows_process_basic_properties} give rigorous meaning to such statements).
%
%
%
%
%
Thus the parameters fall in the regime
of \cite{MueSta11} and according to their results, the typical length of the longest increasing
subsequence in each square is of order $1 / \sqrt{1-q}$. We may thus
create an increasing subsequence with length of order $n\sqrt{1-q}$
by concatenating the longest increasing subsequences in each of the $n(1-q)/\beta$
squares. This reasoning gives rise to the prediction that
$\LIS(\pi)$ is about $C n\sqrt{1-q}$ for some constant $C>0$. The
next theorem establishes the correctness of this prediction, with a
precise constant $C=1$, in the limit
\eqref{eq:limit_for_weak_convergence}.

\begin{theorem}\label{thm:lp-convergence-lis}
Let $(q_n)$ be a sequence satisfying
\begin{equation}\label{eq:limit_for_weak_convergence}
  q_n\to 1\quad\text{and}\quad n(1-q_n)\to\infty
\end{equation}
as $n$ tends to infinity. Suppose $\pi_n\sim\mu_{n,q_n}$. Then
\begin{align*}
\frac{\LIS(\pi_n)}{n\sqrt{1-q_n}} \to 1
\end{align*}
as $n$ tends to infinity, where the convergence takes place in $L_p$
for any $0< p <\infty$.
\end{theorem}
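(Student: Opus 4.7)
The plan is to implement the heuristic stated just before the theorem: regard $\pi_n$ as being approximately ``block diagonal'' on boxes of side $L=\lfloor\beta/(1-q_n)\rfloor$ along the main diagonal, and apply the Mueller--Starr law of large numbers \cite{MueSta11} inside each box. The free parameter $\beta>0$ will be sent to infinity after $n$, using the fact that $\ell(\beta)/\sqrt{\beta}\to 1$ as $\beta\to\infty$ (which I would verify at the outset from the explicit formula of \cite{MueSta11}). Theorem~\ref{thm:displacement} and the block-restriction tools of Section~\ref{sec:Mallows_process_basic_properties} give rigorous meaning to ``approximately block diagonal''. The large deviation bounds proved earlier in the paper will be used only at the end, to upgrade convergence in probability to convergence in $L_p$.

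For the lower bound, fix $\eps>0$ and a large $\beta$ with $\ell(\beta)/\sqrt{\beta}\ge 1-\eps$. Partition $[1,n]$ into $\lfloor n/L\rfloor$ consecutive intervals of length $L$ on both the position axis and the value axis, producing disjoint diagonal boxes. Using Theorem~\ref{thm:displacement}, which gives $\P(|\pi_n(i)-i|\ge L/2)\le 2q_n^{L/2}\to 0$ uniformly in $i$, together with the restriction property from Section~\ref{sec:Mallows_process_basic_properties}, the sub-permutation induced inside each diagonal box is within total variation $o(1)$ of a sample of $\mu_{L,q_n}$ on its relative order, and contains all but $o(L)$ of the points with abscissa in the corresponding interval. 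Mueller--Starr then produces, in each box, an increasing subsequence of length at least $(1-\eps)\ell(\beta)\sqrt{L}$ with high probability. Concatenating across the $\lfloor n/L\rfloor$ boxes yields an increasing subsequence of length at least
\[
(1-o(1))\cdot \frac{n}{L}\cdot (1-\eps)\ell(\beta)\sqrt{L}\;\ge\;(1-O(\eps))\,n\sqrt{1-q_n}.
\]

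The upper bound is the main obstacle, since an arbitrary increasing subsequence need not respect the box partition. I would cover the diagonal strip of width $C/(1-q_n)$ (which contains all but $o(n)$ points, by Theorem~\ref{thm:displacement}) by a family of overlapping squares of side $L$ whose bottom-left corners march along the diagonal in steps of $L/2$, so that every monotone path is expressible as a concatenation of at most $2n/L$ subpaths each contained in a single square. Applying Mueller--Starr inside each square, via the same block-restriction lemma as above, bounds $\LIS$ in every square by $(1+\eps)\ell(\beta)\sqrt{L}$ with high probability; a union bound over the $O(n/L)$ squares together with an additive correction of $o(n\sqrt{1-q_n})$ for the out-of-strip points (controlled by a first-moment estimate from \eqref{eq:displacement_probability_bound}) yields
\[
\LIS(\pi_n)\le (1+O(\eps))\,n\sqrt{1-q_n}\qquad\text{with high probability}.
\]
Letting $\eps\to 0$ after $n\to\infty$ completes convergence in probability.

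To upgrade to $L_p$ convergence for every $p<\infty$, I would combine two ingredients. Deterministically $\LIS(\pi_n)\le n$, so $X_n:=\LIS(\pi_n)/(n\sqrt{1-q_n})\le (1-q_n)^{-1/2}$. On the other hand, the large deviation bound proved earlier in the paper shows $\P(X_n\ge 1+\delta)$ decays stretched-exponentially in $n(1-q_n)\to\infty$, and hence faster than any polynomial in $(1-q_n)$. Splitting
\[
\E[X_n^p]=\E[X_n^p;\,|X_n-1|\le\delta]+\E[X_n^p;\,|X_n-1|>\delta],
\]
the first term is at most $(1+\delta)^p$ while the second is at most $(1-q_n)^{-p/2}\P(|X_n-1|>\delta)\to 0$. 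A matching lower bound via Fatou and the convergence in probability gives $\E[X_n^p]\to 1$, which together with $X_n\to 1$ in probability yields convergence in $L_p$. The truly nontrivial step is the upper bound, specifically showing that the overhead in decomposing an arbitrary increasing subsequence across overlapping diagonal boxes is negligible; the lower bound and $L_p$ upgrade are comparatively routine given Mueller--Starr, Theorem~\ref{thm:displacement}, and the restriction lemma.
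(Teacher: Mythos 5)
Your outline follows the paper's heuristic, but at the two places where the heuristic must be made rigorous there are genuine gaps. The first is the lower bound (concatenation). You decompose into \emph{diagonal} boxes and assert that each box contains ``all but $o(L)$'' of the points of an (almost) Mallows sample, with total-variation error $o(1)$; neither claim is supported by Theorem~\ref{thm:displacement} or by the restriction results of Section~\ref{sec:Mallows_process_basic_properties}, which give an exact Mallows law only for the order induced on a block of consecutive \emph{indices}, not for the point set of a box in both coordinates. Quantitatively, the displacement bound \eqref{eq:displacement_probability_bound} only shows that the number of points of an index block of length $L=\beta/(1-q)$ whose values escape the corresponding value interval is of order $1/(1-q)=L/\beta$ per box, and this is \emph{larger} than the box $\LIS$ itself (which is $\approx\beta/\sqrt{1-q}$) once $q\to1$ with $\beta$ fixed. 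So correcting the concatenation by the \emph{cardinality} of the obstructing points, as your sketch implicitly does, destroys the lower bound. The paper's proof exists precisely to handle this: it keeps the index-block decomposition (so each $X_i=\LIS(\pi_{B_i})$ is exactly Mallows by Corollary~\ref{cor:induced_permutation_Mallows}), and bounds the concatenation error by $\sum_i\bigl(\LIS(\pi_{E_i})+\LIS(\pi_{F_i})\bigr)$ --- the $\LIS$, not the size, of the exceptional sets --- using a stochastic domination by a Mallows permutation of size $a/(1-q)$ for $E_i$ and the purpose-built Theorem~\ref{thm:generalized_upper_bound_LIS} for $F_i$. Nothing in your proposal plays the role of that theorem, and this is the core of the argument.

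The second gap is the upper bound, which you identify as the hard direction; in fact it is the easy one, and your proposed route does not work. Since the index blocks partition $[n]$, one has the deterministic inequality $\LIS(\pi)\le\sum_i\LIS(\pi_{B_i})$, so no covering by overlapping squares is needed. Your union bound over the $O(n/L)$ squares cannot be carried out: Theorem~\ref{thm:MS-wlln} is a convergence-in-probability statement with no rate, the paper's tail bound \eqref{eq:LIS_greater_than_L} only controls deviations above $C$ times the typical value for a large absolute constant $C$, and with the number of boxes tending to infinity one should in fact \emph{expect} some box to exceed $(1+\eps)\ell(\beta)\sqrt{L}$ (the per-box probability of such an excess is bounded below by a constant), so the intermediate claim ``every square has $\LIS\le(1+\eps)\ell(\beta)\sqrt{L}$ with high probability'' is false. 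What makes the sharp constant come out is averaging, not a uniform bound: the $X_i$ are i.i.d., their normalized expectations converge to $\ell(\beta)/\sqrt{\beta}$ (this needs uniform integrability, supplied by the large deviation bounds, since Mueller--Starr alone gives no moment convergence), and the variance of the normalized sum vanishes by independence. Your $L_p$ upgrade via uniform integrability is essentially the paper's and is fine, but the proof as proposed is incomplete without (i) a bound on the $\LIS$ of the obstruction sets in the lower bound and (ii) replacing the square union bound by the trivial partition inequality plus a law of large numbers for the block variables.
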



\begin{figure}[t]
\centering
\includegraphics[width=5.5cm]{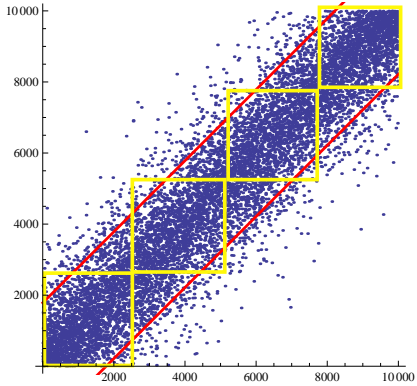}
\caption{Disjoint boxes with side length $\frac{\beta}{1-q}$
\txtred{along} a symmetric strip around the diagonal.}
\label{fig:mallows-points-boxes}
\end{figure}

In addition to this limiting behavior,
Theorem~\ref{thm:lis-largedev} below gives large deviation bounds on
the length of the longest increasing subsequence for fixed values of
$n$ and $q$. The proof of Theorem \ref{thm:lp-convergence-lis}
proceeds along the lines of the heuristic outlined above, combining
our large deviation results with the weak law of large numbers shown
in \cite{MueSta11}.\\


%

{\bf Notation:} We will write $a_{n,q} \approx b_{n,q}$ if there exist absolute constants $0 < c \le C < \infty$ such that $c b_{n,q} \le a_{n,q} \le C b_{n,q}$ for all $n$ and $q$ in a specified regime.


\begin{theorem}\label{thm:lis-largedev}
Suppose that $n\ge 1$, $\frac{1}{2} \le q \le 1-\frac{4}{n}$ and
$\pi\sim\mu_{n,q}$. Then,
\begin{align}\label{eq:E(LIS)}
\E(\LIS(\pi)) \approx n\sqrt{1-q}.
\end{align}
Furthermore, there exist absolute constants $0<C,c<\infty$ such that
\begin{enumerate}[(i)]
\item For integer $L\ge Cn\sqrt{1-q}$,
  \begin{align}\label{eq:LIS_greater_than_L}
    \left(\frac{c(1-q)n^2}{L^2} \right)^L \le \P(\LIS(\pi) \ge L)
    \le \left(\frac{C(1-q)n^2}{L^2} \right)^L.
\end{align}
\item \label{part:lis-less-than-ub} For integer $n(1-q) \le L \le cn\sqrt{1-q}$,
\begin{align}\label{eq:LIS_less_than_L_UB}
\P(\LIS(\pi) < L) \le \exp\left(-\frac{c(1-q)n^2}{L}\right).
\end{align}
\end{enumerate}
\end{theorem}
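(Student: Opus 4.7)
The statement has three pieces, which I will handle in order: the upper tail in \eqref{eq:LIS_greater_than_L}, the lower-tail estimates (the left half of \eqref{eq:LIS_greater_than_L} and the bound \eqref{eq:LIS_less_than_L_UB}), and then \eqref{eq:E(LIS)}, which will follow from the two tail bounds by integration.

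For the upper tail I would use the union bound
$$\P(\LIS(\pi) \ge L) \le \sum_{i_1 < \cdots < i_L} \P(\pi(i_1) < \cdots < \pi(i_L)),$$
together with the uniform estimate $\P(\pi(i_1) < \cdots < \pi(i_L)) \le (c(1-q)n/L)^L$ valid for any choice of positions. To establish this uniform estimate I would use the Mallows-process construction (Lemma~\ref{lem:Mallows_process_distribution}), in which $\pi$ is built sequentially using independent truncated geometric random variables that determine the rank of the next value among those still available. Requiring the values at $i_1, \dots, i_L$ to form an increasing sequence imposes a joint constraint on these geometric variables whose probability decays like $(c(1-q)n/L)^L$. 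Combined with $\binom{n}{L} \le (en/L)^L$, this yields the desired bound $(C(1-q)n^2/L^2)^L$.

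For the lower-tail estimates I would implement the heuristic suggested by Figure~\ref{fig:mallows-points-boxes}. Partition the diagonal strip into $B := \lceil n(1-q)/\beta \rceil$ disjoint boxes of side $\beta/(1-q)$, for a large constant $\beta$. Using the Mallows process (Section~\ref{sec:Mallows_process_basic_properties}) together with Theorem~\ref{thm:displacement} (to ensure that few points of $\pi$ escape their home box), couple $\pi$ with $B$ nearly-independent Mallows samples of size $\beta/(1-q)$ at parameter $q$, each sitting in the Mueller--Starr regime $n(1-q) = \beta$. Theorem~\ref{thm:MS-wlln} then gives, for each box, an increasing subsequence of length of order $1/\sqrt{1-q}$ with probability bounded away from zero, and concatenating along the diagonal produces a global increasing subsequence of length of order $n\sqrt{1-q}$. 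Quantitative Chernoff-style estimates over the $B$ nearly-independent boxes, together with Mallows large-deviation inputs for a single box, yield both the exponential upper bound \eqref{eq:LIS_less_than_L_UB} and---by demanding unusually large LIS values in a prescribed number of boxes---the matching lower bound on $\P(\LIS \ge L)$ in \eqref{eq:LIS_greater_than_L}. Finally, \eqref{eq:E(LIS)} follows by integration: the upper tail gives $\E\LIS \le Cn\sqrt{1-q}$, while \eqref{eq:LIS_less_than_L_UB} applied at $L = cn\sqrt{1-q}$ for small $c$ gives $\E\LIS \ge c'n\sqrt{1-q}$.

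The hardest step will be the box decomposition. The Mallows measure does not factorize across disjoint position intervals, so the per-box sub-permutations are not genuinely independent; making precise the coupling with $B$ independent Mallows samples---and controlling the leakage of points across box boundaries via Theorem~\ref{thm:displacement}---is the technical core. This is also where $\beta$ must be chosen large enough to absorb the boundary effects while remaining fixed as $n$ and $q$ vary.
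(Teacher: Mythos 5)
Your treatment of the upper tail is essentially the paper's argument: a union bound over position sets combined with a per-sequence estimate $\P(\pi(i_1)<\cdots<\pi(i_L))\le (C(1-q)n/L)^L$ obtained from the Mallows-process insertion variables (in the paper this is Proposition~\ref{prop:prob-bound-i}, whose key point is the counting of admissible insertion-value sequences, at most $\binom{2n}{m}$, plus a translation/independence argument to handle indices below $1/(1-q)$). That half is sound, modulo filling in that counting step.

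The lower-tail half, however, has genuine gaps. First, the box-plus-Mueller--Starr scheme cannot produce the stated strength of \eqref{eq:LIS_less_than_L_UB}: with $\sim n(1-q)/\beta$ boxes of side $\beta/(1-q)$, a Chernoff bound over boxes yields at best a failure probability $\exp(-cn(1-q))$, whereas the theorem demands $\exp(-c(1-q)n^2/L)$, which at $L=n(1-q)$ is $e^{-cn}$ and throughout the range is at least $e^{-cn\sqrt{1-q}}\ll e^{-cn(1-q)}$; getting a better per-box failure rate would require precisely the quantitative lower-tail bound you are trying to prove, so the argument is circular. The same circularity afflicts the lower bound in \eqref{eq:LIS_greater_than_L}: Theorem~\ref{thm:MS-wlln} is a weak law with no rate and no upper-tail information, so ``Mallows large-deviation inputs for a single box'' are not available from it, and the trivial bound (a prescribed block increasing, probability $\ge(1-q)^k$) is too weak except when $L\gtrsim n$. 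Relatedly, since Mueller--Starr is purely asymptotic, it cannot by itself give absolute constants valid for all $n\ge1$, $\tfrac12\le q\le1-\tfrac4n$. Second, controlling leakage ``via Theorem~\ref{thm:displacement}'' is too crude: the expected number of points in a box whose displacement exceeds a constant times $\beta/(1-q)$ is of order $e^{-c\beta}/(1-q)$, which is much larger than the box LIS $\asymp 1/\sqrt{1-q}$, so subtracting escapers destroys the estimate; this is exactly why the paper, in its law-of-large-numbers argument, needs the refined bound of Theorem~\ref{thm:generalized_upper_bound_LIS} on the LIS of the far-displaced points rather than a count of them. The paper avoids all of this for Theorem~\ref{thm:lis-largedev} by working directly with the Mallows process: it defines a window $W$ of width $\asymp 1/(1-q)+n/L$ and stopping times at which new elements are inserted into $W$ (such elements automatically form an increasing subsequence of the reversed permutation), and then binomial Chernoff and lower-tail estimates for the number of such insertions give both \eqref{eq:LIS_less_than_L_UB} and the lower half of \eqref{eq:LIS_greater_than_L} (with the trivial $(1-q)^L$ bound covering $L\gtrsim n$); the $L$-dependent window width is what produces the correct exponent $c(1-q)n^2/L$, something a fixed box size cannot capture. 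Your derivation of \eqref{eq:E(LIS)} from the two tail bounds is fine once those bounds are actually established.
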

The bound \eqref{eq:LIS_less_than_L_UB} can be improved for certain
regimes of $n,q$ and $L$; for details see section~\ref{sec:erdos-szekeres}.
Complementing the regime of $q$ in \eqref{eq:E(LIS)}, we have the following simple bound
on $\E(\LIS(\pi))$, which is rather precise for small $q$.
\begin{proposition}\label{prop:small_q_LIS_exp_bound}
Suppose that $n\ge 1$, $0 < q \le 1$ and $\pi\sim\mu_{n,q}$. Then
\begin{equation*}
  n(1-q) \le \E(\LIS(\pi))\le n - \frac{q}{1+q}(n-1).
\end{equation*}
\end{proposition}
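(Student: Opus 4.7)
The plan is to prove the two inequalities by independent arguments, each of which is quite short.

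For the lower bound, I will appeal to the Mallows process representation of $\pi\sim\mu_{n,q}$ stated in Lemma~\ref{lem:Mallows_process_distribution}. This realizes $\pi$ in terms of independent truncated-geometric random variables $r_1,\ldots,r_n$, with $r_i$ supported on $\{0,1,\ldots,n-i\}$ and $\P(r_i=k)=(1-q)q^k/(1-q^{n-i+1})$, via the rule that $\pi(i)$ is the $(r_i+1)$-th smallest element of $\{1,\ldots,n\}\setminus\{\pi(1),\ldots,\pi(i-1)\}$. I will set $S:=\{i:r_i=0\}$ and check that the subsequence $(\pi(i))_{i\in S}$ is automatically increasing: if $i<j$ both lie in $S$, then $\pi(i)$ is the smallest value available at time $i$, so every value $\le \pi(i)$ has been used by then; since $\pi(j)$ is the smallest value available at the later time $j$, it must exceed $\pi(i)$. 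Consequently $\LIS(\pi)\ge |S|$, and
\[
  \E(\LIS(\pi))\ge \sum_{i=1}^n \P(r_i=0)=\sum_{i=1}^n\frac{1-q}{1-q^{n-i+1}}\ge n(1-q).
\]

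For the upper bound, I will combine a deterministic inequality with a one-line swapping computation. The deterministic step is $\LIS(\pi)\le n-D(\pi)$, where $D(\pi):=|\{i\in\{1,\ldots,n-1\}:\pi(i)>\pi(i+1)\}|$ is the number of descents. This holds because $\{1,\ldots,n\}$ decomposes into $n-D(\pi)$ maximal runs of consecutive positions on which $\pi$ strictly decreases (one more than the number of ascents), and any increasing subsequence can select at most one entry from each such run. For the expectation, I will note that swapping the values in positions $i$ and $i+1$ is an involution on $S_n$ which exchanges $\{\pi:\pi(i)<\pi(i+1)\}$ and $\{\pi:\pi(i)>\pi(i+1)\}$ while altering $\inv(\pi)$ by exactly $\pm 1$; weighting by $q^{\inv(\pi)}$ immediately yields $\P(\pi(i)>\pi(i+1))=q/(1+q)$ for every $i$. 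Summing gives $\E(D(\pi))=(n-1)q/(1+q)$, and therefore $\E(\LIS(\pi))\le n-\frac{q}{1+q}(n-1)$.

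Neither step presents a substantial obstacle, so this proposition should admit a quick proof. The only place that needs mild care is the deterministic bound $\LIS\le n-D$: one must work with maximal \emph{decreasing} runs (not ascending runs) and verify that an increasing subsequence hits each such run at most once. Everything else is a clean use of the insertion process on one side and the inversion-flip involution on the other.
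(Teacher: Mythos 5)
Your proof is correct and follows essentially the same route as the paper: the lower bound counts the indices at which the sequential truncated-geometric variable takes its minimal value — the paper does this via the coupling $\pi(i)=n+1-p_n(i)$ and the set $\{i: p_i(i)=1\}$, the mirror image of your set $S$ — and the upper bound is exactly the paper's descent bound $\LIS(\pi)\le n-|I_2|$ with $\P(\pi(i)>\pi(i+1))=q/(1+q)$. The only cosmetic differences are that you use the value-selection (Lehmer-code) form of the construction rather than the insertion process actually stated in Lemma~\ref{lem:Mallows_process_distribution} (the two are equivalent, cf.\ Corollary~\ref{cor:perms-that-are-mallows}), and you compute the descent probability by an adjacent-swap involution instead of invoking Corollary~\ref{cor:induced_permutation_Mallows}.
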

When $\pi$ is sampled uniformly from $S_n$, symmetry implies that
$\LIS(\pi)$ and $\LDS(\pi)$ have the same distribution.
For the Mallows measure, the analogous fact is not true. Indeed,
looking at Figure \ref{fig:mallows-points} one expects $\LDS(\pi)$
to be of a smaller order of magnitude than $\LIS(\pi)$ when
$\pi\sim\mu_{n,q}$ with $q<1$ since the overall trend of the points
is positive. Our next theorem establishes the order of magnitude for
$\LDS(\pi)$, confirming this expectation. Interestingly, we find as
many as four different behaviors for this order of magnitude
according to the relation between $n$ and $q$.

\begin{theorem}\label{thm:E(LDS)}
There exist constants $C_0, c_1>0$ such that the following is true.
Suppose that $n\ge 2$, $0 < q < 1$ and $\pi\sim\mu_{n,q}$.
  \begin{enumerate}[(i)]
    \item \begin{equation}\label{eq:E_LDS_large_q}
      \E(\LDS(\pi))\approx\begin{cases}
      \frac{1}{\sqrt{1-q}}&1-\frac{C_0}{(\log n)^2}\le q\le
      1-\frac{4}{n}\\[1em]
      \frac{\log n}{\log((1-q)(\log n)^2)}&1-\frac{c_1(\log\log n)^2}{\log n} \le q\le 1-\frac{C_0}{(\log
    n)^2}\\[1em]
    \sqrt{\frac{\log n}{\log\left(\frac{1}{q}\right)}}&\frac{1}{n}\le q\le 1-\frac{c_1(\log\log n)^2}{\log n}
      \end{cases}.
    \end{equation}
    \item If $0<q\le\frac{1}{n}$ then
      \begin{equation*}
         \E(\LDS(\pi))-1\approx nq.
      \end{equation*}
  \end{enumerate}
\end{theorem}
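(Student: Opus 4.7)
The proof treats the four regimes separately, since the dominant mechanism that produces a long decreasing subsequence changes qualitatively across them. The common toolkit consists of Theorem~\ref{thm:displacement} for the geometric tails of displacements; the restriction invariance of the Mallows measure on any block of consecutive positions (the pattern of $\pi$ on $\{a, a+1, \ldots, a+\ell-1\}$ is distributed as $\mu_{\ell, q}$, a routine consequence of the $q^{\inv(\pi)}$ weight); the Mueller--Starr scaling theorem~\ref{thm:MS-wlln}, together with its $\LDS$-analogue obtained through the reversal duality $\LDS(\pi)\stackrel{d}{=}\LIS(\pi^R)$; and the Erd\H{o}s--Szekeres inequality $\LIS\cdot\LDS\ge n$. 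In each regime one proves the upper bound by a tail estimate on $\P(\LDS\ge m)$ and the lower bound by an explicit construction.

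For the first sub-regime of part~(i), $1-q\le C_0/(\log n)^2$, the heuristic of Figure~\ref{fig:mallows-points-boxes} is made rigorous. Partition the positions into consecutive blocks of size $\beta/(1-q)$ for a large constant $\beta$. By restriction invariance, within a single block the pattern is exactly $\mu_{\beta/(1-q), q}$, so Mueller--Starr applied in the regime $n'(1-q')=\beta$ shows that its $\LDS$ is of order $\sqrt{1/(1-q)}$, yielding the lower bound at once. For the upper bound, Theorem~\ref{thm:displacement} forces all but an $o(1)$ fraction of the points into the strip of width $\Theta(1/(1-q))$, and a decreasing subsequence crossing more than $O(1)$ consecutive blocks would require pairs of points $(i_1,\pi(i_1))\in B_k$ and $(i_2,\pi(i_2))\in B_{k+c}$ with $\pi(i_1)>\pi(i_2)$ but the value ranges of $B_k$ and $B_{k+c}$ disjoint. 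The displacement bound rules out such pairs except with small probability, and iterating the single-block Mueller--Starr bound over the remaining $O(1)$ blocks yields the matching upper bound.

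For the third sub-regime of part~(i), $q\le 1-c_1(\log\log n)^2/\log n$, the strip is too thin for the tiling to capture the true order, and a direct counting argument takes its place. By restriction invariance, the probability that a fixed block of $\ell$ consecutive positions contains a decreasing pattern of length $m$ is at most $\binom{\ell}{m} q^{\binom{m}{2}}/Z_{m,q}$. The geometric tail of Theorem~\ref{thm:displacement} in turn localises any such $m$-subsequence to a window of width $O(m/(1-q))$, so a union bound over $O(n(1-q)/m)$ such windows gives a tail of the form $\exp(O(m\log(m/(1-q)))-cm^2\log(1/q))$, which is nontrivial precisely when $m\gtrsim\sqrt{\log n/\log(1/q)}$. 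For the matching lower bound, restriction invariance produces a decreasing $m$-pattern in one large block with probability $\asymp q^{\binom{m}{2}}/Z_{m,q}$, and a second-moment calculation over many well-separated blocks (whose patterns are approximately independent, controlled by the displacement tail) ensures at least one occurrence when $m\lesssim\sqrt{\log n/\log(1/q)}$.

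The middle sub-regime of part~(i) is handled by interpolating the two methods, with the block size chosen to saturate the strip width, which after optimisation produces the expression $\log n/\log((1-q)(\log n)^2)$. Finally, for part~(ii), $q\le 1/n$, the upper bound follows from the elementary observation $\LDS(\pi)-1\le |\{i:\pi(i)>\pi(i+1)\}|$ (between any two consecutive elements of a longest decreasing subsequence there must lie at least one descent), whose expectation is $(n-1)q/(1+q)\le nq$ by restriction invariance on consecutive pairs; the lower bound uses $\E(\LDS)-1\ge\P(\pi\ne\mathrm{id})=1-1/Z_{n,q}$, and a direct Taylor expansion of~\eqref{eq:Z_formula} gives $1-1/Z_{n,q}\asymp nq$ in the range $nq\le 1$. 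The main obstacle is the middle sub-regime: matching the Mueller--Starr and counting arguments uniformly across the transitions $1-q\sim 1/(\log n)^2$ and $1-q\sim(\log\log n)^2/\log n$ is what pins down the precise thresholds and forces the careful tuning of the constants $C_0$ and $c_1$.
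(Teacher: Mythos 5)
Your part (ii) and your lower-bound constructions are essentially sound (indeed your part (ii) upper bound via the descent count, $\LDS(\pi)-1\le|\{i:\pi(i)>\pi(i+1)\}|$ with expectation $\frac{(n-1)q}{1+q}$, is cleaner than the paper's route through Proposition~\ref{prop:identity_for_small_q} and the tail at $L=5$), but the upper bounds in part (i) are not actually established, and that is where the theorem lives. In the first regime you propose to deduce the upper bound from Mueller--Starr plus the displacement bound. Theorem~\ref{thm:MS-wlln} is only convergence in probability, with no rate and no tail control, so it cannot bound an expectation, let alone the quantity you actually need: $\LDS(\pi)$ is essentially a maximum of block-$\LDS$'s over $\Theta(n(1-q))$ blocks (which block carries the long decreasing run is random), so any upper bound must beat a union over polynomially many locations. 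This is exactly why the paper first proves the quantitative tail $\P(\LDS(\pi)\ge L)\le n^8\bigl(\tfrac{C}{(1-q)L^2}\bigr)^L$ of \eqref{eq:LDS_greater_than_L} (via the union bound over index sets $I$ \emph{and} insertion values $J$ in Section 6.1), and why the first regime is precisely where $C/\sqrt{1-q}\gtrsim\log n$; ``iterating the single-block MS bound over the remaining $O(1)$ blocks'' ignores this maximum, as well as the points outside the strip (a constant fraction $\sim e^{-a}n$ of points lie outside width $a/(1-q)$), whose contribution needs its own estimate. Your third-regime counting has two concrete holes as well: the per-index-set bound $q^{\binom{m}{2}}/Z_{m,q}$ is asserted for \emph{non-consecutive} positions, where the induced pattern is not Mallows (the paper explicitly notes that non-translates behave differently after Lemma~\ref{lem:shift}); proving such a bound is essentially the content of the paper's Section 6.1 computation, not a consequence of restriction invariance. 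And the localization ``to a window of width $O(m/(1-q))$'' does not survive the union bound: the probability that some point has displacement exceeding $m/(1-q)$ is of order $ne^{-cm}$, which is far larger than the target $1/n$ whenever $m\ll\log n$, as happens throughout the third regime.

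The middle regime, which you yourself flag as the main obstacle, is left as ``interpolation/optimisation'' with no argument, and it is exactly where the paper needs a mechanism absent from your sketch: the lower bound there comes from one \emph{atypical} box among $\Theta(n(1-q))$ disjoint boxes along the diagonal containing a decreasing run of length $L$, an event of probability $\bigl(\tfrac{c}{(1-q)L^2}\bigr)^L$ per box (first case of \eqref{eq:LDS_greater_than_L_lb}, proved by a dedicated construction with the Mallows process), valid only for $C/\sqrt{1-q}\le L\le 1/(1-q)$; checking that $L^*=\Theta\bigl(\log n/\log((1-q)(\log n)^2)\bigr)$ fits that window, and switching to the consecutive-reversed-run bound in the lower part of the regime, is precisely the bookkeeping that determines $C_0$ and $c_1$. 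By contrast, the paper's proof of Theorem~\ref{thm:E(LDS)} is a short verification once Theorem~\ref{thm:lds-largedev} and \eqref{eq:LDS_less_than_L} are in hand: in each regime one plugs a suitable $L^*$ into the relevant tail bound. Your plan replaces that quantitative machinery with a qualitative limit theorem and an unproven monotonicity-plus-localization scheme, so as written it does not close; if you want to salvage it, you must first prove uniform upper-tail bounds of the type \eqref{eq:LDS_greater_than_L} (or an equivalent), at which point you are back on the paper's route.
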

We pause briefly to give an informal reasoning for the results of
Theorem~\ref{thm:E(LDS)}. As explained before
Theorem~\ref{thm:lp-convergence-lis} above, one may again employ the
idea of placing $n(1-q)/\beta$ disjoint squares of side length $\beta/(1-q)$
along the diagonal as in Figure~\ref{fig:mallows-points-boxes}.
Since we expect the distribution of the points in each such square
to be close to that of the Mallows $\mu_{\beta/(1-q), q}$ measure, the
results of \cite{MueSta11} suggest that the typical order of magnitude
of the length of the longest decreasing subsequence in each square
is of order $1/\sqrt{1-q}$. When considering decreasing subsequences we
cannot concatenate the subsequences of disjoint squares, since the
overall trend of the points is positive. This heuristic suggests
that $\LDS(\pi)$ should have order of magnitude at least as large as
$1/\sqrt{1-q}$ and possibly not much larger. This is indeed the
order of magnitude obtained in the first regime of
Theorem~\ref{thm:E(LDS)}. However, as $q$ decreases a different
behavior takes over. Since we have $n(1-q)/\beta$ disjoint squares in
which to consider the longest decreasing subsequence, we may expect that one of these
squares exhibits \emph{atypical} behavior, with a decreasing subsequence of order which is
significantly longer than $1/\sqrt{1-q}$. The length of such an
atypical decreasing subsequence may be predicted rather accurately
using the large deviation results in Theorem~\ref{thm:lds-largedev}
below and it turns out to be indeed significantly longer than
$1/\sqrt{1-q}$ when $(\log n)^2(1-q)\to \infty$. This is what causes
the transition between the first two regimes in
Theorem~\ref{thm:E(LDS)}. A different strategy for obtaining a
decreasing subsequence should also be considered. Consider the
length of a longest decreasing subsequence composed solely of
\emph{consecutive} elements, i.e., the largest $m$ for which
$\pi(j)>\pi(j+1)>\ldots>\pi(j+m-1)$ for some $j$. The proof of
Theorem~\ref{thm:E(LDS)} shows that the length of such a decreasing
subsequence will have the same order of magnitude as the longest decreasing subsequence when
$q$ is so small that the typical longest decreasing subsequence is longer than $1/(1-q)$.
This is what governs the behavior in the third regime of the
parameters in the theorem as well as in part of the second regime.
Lastly, when $q\le\frac{1}{n}$, i.e., in the fourth regime of the
theorem, the probability that the random permutation differs from
the identity is of order $nq$ (see
Proposition~\ref{prop:identity_for_small_q} below). This is what
governs the behavior in the fourth regime of the theorem.

%

\begin{remark}\label{rem:LDS_limit}It seems likely that $\LDS(\pi)$ satisfies a law of
large numbers similar to the one in
Theorem~\ref{thm:lp-convergence-lis}. Indeed, if one formally takes
the limit $\beta\to-\infty$ in the results of $\cite{MueSta11}$ one
obtains that $\LDS(\pi)\sqrt{1-q}$ should tend to the constant
$\pi$. We expect this result to hold when $n(1-q)\to\infty$ and
$(\log n)^2(1-q)\to 0$, corresponding to the first regime in
\eqref{eq:E_LDS_large_q}, see also Section~\ref{sec:open-ques}.
\end{remark}
Analogously to Theorem \ref{thm:lis-largedev}, we obtain large
deviation estimates for $\LDS(\pi)$ holding for fixed $n$ and $q$.
\begin{theorem}\label{thm:lds-largedev}
There exist constants $C,c>0$ such that the following is true. Let
$n\ge 2$, $0 < q < 1$ and $\pi\sim\mu_{n,q}$.
\begin{enumerate}[(i)]
\item  \label{part:lds-greater-than-ub} If $0<q<1-\frac{2}{n}$ then for integer $L\ge
2$,
\begin{align}\label{eq:LDS_greater_than_L}
\P(\LDS(\pi) \ge L) \le
  n^8\begin{cases}\left(\frac{C}{(1-q)L^2}\right)^L&
  L\le\frac{3}{1-q}\\(C(1-q))^L
  q^{\frac{L(L-1)}{2}}&L>\frac{3}{1-q}\end{cases}.
\end{align}
Moreover, if  $\; 0<q<\frac{1}{2}$ then for integer $L\ge 2$,
\begin{equation}\label{eq:LDS_greater_than_L_refined}
  \P(\LDS(\pi)\ge L)\le nC^L q^{\frac{L(L-1)}{2}}.
\end{equation} \label{part:LDS_large_dev_upperbound}
\item \label{part:LDS_greater_than_L_lb} For integer $L$,
\begin{align}\label{eq:LDS_greater_than_L_lb}
\P(\LDS(\pi) \ge L) \ge
\begin{cases}
1- \left(1-\left(\frac{c}{(1-q)L^2}\right)^L\right)^{\lfloor \frac{n(1-q)}{4} \rfloor} &
\text{if }\frac{C}{\sqrt{1-q}}\le L \le \frac{1}{1-q}\;\;\text{ and }\; \; \frac{1}{2} \le q \le 1-\frac{4}{n}\\
 1-\left(1-q^{\frac{L(L-1)}{2}} (1-q)^L\right)^{\lfloor\frac{n}{L}\rfloor} & \text{for any $L\ge 2$}
\end{cases}.
\end{align}
\item \label{part:lds-less-than-ub} Let $\frac{1}{2} \le q \le 1-\frac{4}{n}$. For integer $2\le L<\frac{c}{\sqrt{1-q}}$,
\begin{align}\label{eq:LDS_less_than_L}
\p(\LDS(\pi) < L) \le (C(1-q)L^2)^{\frac{n}{L}}.
 \end{align}
\end{enumerate}
\end{theorem}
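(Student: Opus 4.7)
The theorem has three claims: an upper bound (i), a two-case lower bound (ii), and an upper bound (iii) for the complementary event. I treat them in order of increasing difficulty.

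\textbf{Part (iii).} This is a direct consequence of Erdős–Szekeres and the LIS upper bound \eqref{eq:LIS_greater_than_L}. Since $\LIS(\pi)\cdot\LDS(\pi)\ge n$, the event $\{\LDS(\pi)<L\}$ is contained in $\{\LIS(\pi)\ge n/L\}$. The hypothesis $L<c/\sqrt{1-q}$ yields $n/L>Cn\sqrt{1-q}$, which lies in the range of validity of \eqref{eq:LIS_greater_than_L}. Substituting $L':=n/L$ and simplifying gives $\P(\LDS(\pi)<L)\le(C(1-q)L^2)^{n/L}$.

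\textbf{Part (ii).} I use disjoint position intervals and the following Mallows-measure decomposition: if $W_1,\dots,W_k$ are consecutive disjoint intervals of positions, then conditional on the value sets $V_j:=\pi(W_j)$, the restrictions $\pi|_{W_j}$ are independent, each distributed as $\mu_{|W_j|,q}$ after order-preserving relabeling of $V_j$ to $\{1,\dots,|W_j|\}$. This holds by a direct inversion count: for $i\in W_a,\ j\in W_b$ with $a<b$ one has $i<j$ automatically, so the cross-inversion indicator $\mathbf{1}[\pi(i)>\pi(j)]$ depends only on the unordered pair $(V_a,V_b)$, and hence the Mallows weight factorizes. For the second line of \eqref{eq:LDS_greater_than_L_lb} I take $\lfloor n/L\rfloor$ consecutive windows of length $L$; each is strictly decreasing with conditional probability $q^{L(L-1)/2}/Z_{L,q}\ge q^{L(L-1)/2}(1-q)^L$ using \eqref{eq:Z_formula}, and integrating out the value partition yields the stated product form. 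For the first line (valid when $L\ge C/\sqrt{1-q}$) I replace the windows by $\lfloor n(1-q)/4\rfloor$ disjoint diagonal squares of side of order $1/(1-q)$; Theorem~\ref{thm:displacement} together with the Mallows process facts of Section~\ref{sec:Mallows_process_basic_properties} shows that the points in each square are very close to a $\mu_{1/(1-q),q}$ sample, and inside such a sample a specific descending-staircase configuration on $L$ equally spaced positions and values yields $\P(\LDS\ge L)\ge (c/((1-q)L^2))^L$ by a direct Mallows-weight computation.

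\textbf{Part (i).} This is the technical heart. The backbone is a union bound over increasing position sequences $i_1<\dots<i_L$ and decreasing value sequences $v_1>\dots>v_L$. The factorization underlying part (ii) supplies the factor $q^{L(L-1)/2}$ from the $\binom{L}{2}$ inversions internal to the subsequence. The challenge is pruning the naive combinatorial count $\binom{n}{L}^2$. In the regime $L\le 3/(1-q)$, the subsequence fits inside the typical strip of width $O(1/(1-q))$ from Theorem~\ref{thm:displacement}, and grouping positions into diagonal $\mu_{1/(1-q),q}$-pieces reduces the effective count to $(C/((1-q)L^2))^L$, with coarse choices (starting point, span, boundary corrections) absorbed into an $n^8$ prefactor. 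In the regime $L>3/(1-q)$, the subsequence cannot fit in the strip, and the displacement tail $\P(|\pi(i)-i|\ge t)\le 2q^t$ supplies the extra factor $(C(1-q))^L$ from the atypical endpoint displacements that must be paid. For the refined bound \eqref{eq:LDS_greater_than_L_refined} in the regime $q<1/2$, the displacement tail $q^t$ is sharp enough to force any decreasing subsequence of length $L$ into a window of $O(L)$ consecutive indices, reducing the combinatorial factor from $n^8$ to $n$ and from $\binom{n}{L}$ to $nC^L$. The main obstacle I expect is this quantitative accounting in part (i): extracting a polynomial-in-$n$ prefactor rather than an exponential-in-$L$ one while retaining the sharp $q^{L(L-1)/2}$ term from the internal inversions, with a secondary difficulty being to rigorously decouple the diagonal squares in the first case of part (ii) via the Mallows process machinery rather than only its marginal description.
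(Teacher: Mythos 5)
Your part (iii) and the second line of \eqref{eq:LDS_greater_than_L_lb} are correct and essentially the paper's own arguments: Erd\H{o}s--Szekeres combined with the upper bound of \eqref{eq:LIS_greater_than_L}, and reversed-identity blocks of length $L$ combined with Lemma~\ref{lem:indep-seqs} and Corollary~\ref{cor:induced_permutation_Mallows} (your conditioning-on-value-sets factorization is a valid re-derivation of those two facts). The genuine gaps are in part (i) and in the first line of \eqref{eq:LDS_greater_than_L_lb}.

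For part (i) your sketch does not identify the mechanism that actually produces the bounds. In the regime $L\le\frac{3}{1-q}$ the cost of a fixed spread-out index set $I=(i_1,\ldots,i_L)$ being decreasing is not governed by the internal inversion weight $q^{L(L-1)/2}$ (which is only of size $c^L$ there), and it cannot be extracted from Theorem~\ref{thm:displacement}, whose tail bounds are marginal, one-element estimates that the paper explicitly does not use in this analysis. The dominant effect is that between consecutive elements $i_k<i_{k+1}$ of the subsequence essentially no later element may be inserted at a low position, or else $i_k$ is pushed past $i_{k+1}$. The paper quantifies this through the Mallows-process insertion variables: conditioned on the insertion positions $j_k$, the relevant count stochastically dominates a $\bin(i_{k+1}-i_k-1,\,1-q^{j_k+1})$ variable which must stay below $j_{k+1}-j_k$ (Proposition~\ref{prop:prob-I-decreases}); summing over $I$ via Lemma~\ref{lem:sum_bin_prob} gives factors $(j_{k+1}-j_k)/(1-q^{j_k+1})$, and the combinatorial summations over the insertion sequences $J$ (Corollary~\ref{cor:sum_ind}, Lemma~\ref{lem:second_j_bound}, Proposition~\ref{prop:final_I_J_summation}), followed by splitting $I$ and $J$ at the scales $\frac{1}{1-q}$ and $\frac{1}{2(1-q)}$, are what yield $(C/((1-q)L^2))^L$, $(C(1-q))^Lq^{L(L-1)/2}$ and the $n^8$ prefactor. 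Nothing in your plan (pruning the naive count by ``grouping positions into diagonal pieces'', paying displacement tails at endpoints) supplies the crucial $L^{-2L}$ factor or this binomial suppression; and your route to \eqref{eq:LDS_greater_than_L_refined} --- that the displacement tail forces a length-$L$ decreasing subsequence into $O(L)$ consecutive indices --- is not a valid reduction, since no such deterministic confinement holds; the refined bound comes from summing $q^{\sum_k j_k}$ over admissible insertion sequences when $q<\frac12$.

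For the first line of \eqref{eq:LDS_greater_than_L_lb}, the block-independence skeleton is right, but the entire content is the per-block estimate $\P(\LDS\ge L)\ge(c/((1-q)L^2))^L$ for a Mallows permutation of size about $\frac{1}{1-q}$, and your justification --- that the points in each diagonal square are ``very close to a $\mu_{1/(1-q),q}$ sample'' and that a staircase configuration then has this probability ``by a direct Mallows-weight computation'' --- is a heuristic, not a proof: Corollary~\ref{cor:induced_permutation_Mallows} controls only the induced relative order on a block of positions, not the joint location of values, and the probability that prescribed spread-out positions carry prescribed decreasing values involves exactly the interactions with intermediate elements that make part (i) hard. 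The paper instead builds the staircase from the insertion variables: for each $k$ it asks that some index in an interval $I_{j,k}$ be inserted with $p_i(i)$ in a window $O_k$; these events are independent across $(j,k)$, each has probability at least $c/((1-q)L^2)$, and the monotone structure of the process guarantees the selected indices form a decreasing subsequence, which yields the stated bound after taking $\lfloor n(1-q)/4\rfloor$ independent trials.
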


The discussion above focused on the typical order of magnitude and
large deviations of $\LIS(\pi)$ and $\LDS(\pi)$ when $\pi$ is
distributed according to the Mallows distribution. Also interesting,
and seemingly more difficult, is the study of the typical deviations
of $\LIS(\pi)$ and $\LDS(\pi)$ from their expected value. In this
paper we make only a modest contribution towards understanding these
quantities, as given in the following proposition. We denote by
$\var(X)$ the variance of $X$.
\begin{proposition}\label{prop:var_prop}
  Let $n\ge 1, 0<q<\infty$ and $\pi\sim\mu_{n,q}$. Then
  \begin{equation*}
    \var(\LIS(\pi))\le n-1.
  \end{equation*}
  Furthermore, for all $t>0$,
  \begin{equation*}
    \P(|\LIS(\pi)-\E(\LIS(\pi))|> t\sqrt{n-1})< 2e^{-t^2/2}.
  \end{equation*}
\end{proposition}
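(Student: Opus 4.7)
The plan is to derive both assertions from the Azuma--Hoeffding inequality applied to a Doob martingale for $\LIS(\pi)$ built from the sequential Mallows construction.

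Using Lemma~\ref{lem:Mallows_process_distribution}, I would express $\pi$ as a deterministic function of $n-1$ independent random variables $R_1,\ldots,R_{n-1}$, where $R_i$ has a truncated geometric distribution on $\{1,\ldots,n-i+1\}$ with probabilities proportional to $q^{R_i-1}$, and $\pi(i)$ is defined as the $R_i$-th smallest element of the remaining set $\{1,\ldots,n\}\setminus\{\pi(1),\ldots,\pi(i-1)\}$ (the value $\pi(n)$ is then forced). Next I would introduce the Doob martingale $M_i=\E[\LIS(\pi)\mid R_1,\ldots,R_i]$ for $0\le i\le n-1$, so that $M_0=\E[\LIS(\pi)]$, $M_{n-1}=\LIS(\pi)$, and the increments $D_i = M_i - M_{i-1}$ are orthogonal martingale differences satisfying
\begin{equation*}
\var(\LIS(\pi))=\sum_{i=1}^{n-1}\E\bigl[D_i^2\bigr].
\end{equation*}

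The crux is to establish the pointwise bound $|D_i|\le 1$ almost surely. Conditioning on $R_1,\ldots,R_{i-1}$, this reduces to showing
\begin{equation*}
\bigl|\E[\LIS(\pi)\mid R_1,\ldots,R_{i-1},R_i=r]-\E[\LIS(\pi)\mid R_1,\ldots,R_{i-1},R_i=r']\bigr|\le 1
\end{equation*}
for every pair $r,r'$ of admissible values of $R_i$. I would prove this by coupling two runs of the Mallows process that share every $R_j$ except $R_i$: the resulting permutations $\pi,\pi'$ agree on the first $i-1$ positions, pick different values at position $i$, and thereafter draw the same-rank element from their respective remaining sets. A combinatorial analysis of these coupled processes should reveal that $\pi'$ is obtained from $\pi$ by a single cyclic reshuffling of values along a corresponding cyclic permutation of positions, and that this monotone rearrangement changes $\LIS$ by at most one.

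Granted $|D_i|\le 1$, the variance estimate $\var(\LIS(\pi))\le n-1$ is immediate from the orthogonality identity above, and the Azuma--Hoeffding inequality with $\sum_{i=1}^{n-1}c_i^2=n-1$ yields
\begin{equation*}
\P\bigl(|\LIS(\pi)-\E[\LIS(\pi)]|>t\sqrt{n-1}\bigr)\le 2\exp(-t^2/2).
\end{equation*}
The hard part will be the pointwise estimate $|\LIS(\pi)-\LIS(\pi')|\le 1$ under the coupling. The two permutations may disagree in many positions, since the ``discrepancy'' between their remaining sets can migrate and temporarily grow over successive picks; the delicate combinatorial claim is that the net effect on the final permutations is nevertheless a single monotone cyclic shift of values, an operation whose impact on the longest increasing subsequence can be controlled by one.
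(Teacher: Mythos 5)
Your overall architecture is the same as the paper's: represent $\LIS(\pi)$ as a function of $n-1$ independent truncated-geometric variables coming from a sequential construction of the Mallows measure, verify a bounded-differences property with constant $1$, and conclude the variance bound from orthogonality of martingale increments and the tail bound from Azuma--Hoeffding. The difference is the representation: the paper works with the insertion process, writing $\LIS(p_n)=f(p_2(2),\dots,p_n(n))$ with $p_n\sim\mu_{n,1/q}$, while you use the ``selection'' form in which $\pi(i)$ is the $R_i$-th smallest unused value. Your representation is indeed Mallows-distributed (note $R_i-1=\#\{j>i:\pi(j)<\pi(i)\}$, so the law is proportional to $q^{\sum_i(R_i-1)}=q^{\inv(\pi)}$), but this is not literally Lemma~\ref{lem:Mallows_process_distribution}, which concerns the insertion process; you would need to verify it directly. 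The genuine gap is that the one claim carrying all the content --- that changing a single $R_i$ changes $\LIS$ by at most $1$ --- is asserted but not proved; you yourself flag it as ``the hard part'' and express concern that the discrepancy between the two remaining sets may migrate and grow. In the paper, the analogous Lipschitz statement \emph{is} the entire proof (a case analysis on a longest increasing subsequence of $p_n^x$, dropping at most one index), so as written your proposal omits precisely the step that requires work.

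The claim is true in your representation and the worry about migrating discrepancies disappears once one observes an order-isomorphism invariance. Suppose $R_i$ is changed from $r$ to $r'>r$, and let $v_1<v_2<\cdots$ be the values unused before step $i$. After step $i$ the two remaining sets are order-isomorphic via the map $\phi$ which fixes every value outside $\{v_r,\dots,v_{r'}\}$ and sends $v_k\mapsto v_{k-1}$ for $r<k\le r'$; since both runs subsequently select the same ranks from order-isomorphic sets, their picks correspond under $\phi$ at every later step. Hence $\pi'(j)=\pi(j)$ for $j<i$, $\pi'(i)=v_{r'}$, and $\pi'(j)=\phi(\pi(j))$ for $j>i$ --- exactly your ``single monotone cyclic shift,'' now established rather than hoped for. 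From this, $\LIS(\pi')\ge\LIS(\pi)-1$ follows by a short case analysis on an index set $I$ of a longest increasing subsequence of $\pi$: if $i\in I$, drop $i$; the values at earlier indices are unchanged and lie below $v_r$, while the values at later indices are mapped by $\phi$ into $[v_r,\infty)$ and keep their relative order, so $I\setminus\{i\}$ still works. If $i\notin I$, only the junction between the last index of $I$ before $i$ (value $w$) and the indices after $i$ can fail; at most one later value can be pushed below $w$ by $\phi$ (two such values $v_{k_1}<v_{k_2}$ would give $w>v_{k_2-1}\ge v_{k_1}>w$, a contradiction), and by monotonicity of $\phi$ it must be the first one, so dropping that single index restores an increasing subsequence of length $|I|-1$. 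With this lemma in hand, your martingale and Azuma steps go through and yield both assertions, in parallel with the paper's proof.
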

We note that the proposition applies equally well to the
distribution of $\LDS(\pi)$ since it applies to arbitrary $q$ and,
as noted above, the reversal of $\pi$ is distributed as
$\mu_{n,1/q}$, and satisfies that $\LIS(\pi) = \LDS(\pi^R)$. We
expect that when $n$ tends to infinity with $0<q<1$ fixed then
$\var(\LIS(\pi))$ will indeed be of order $n$. However, if $q$
increases to $1$ as $n$ tends to infinity then we expect the
variance to be of smaller order, 
see the discussion in
Section~\ref{sec:open-ques}.

We finish the description of our main results with a simple
proposition which is useful for very small $q$. It shows that when
$nq$ is much smaller than $1$, the Mallows distribution is
concentrated on the identity permutation.
\begin{proposition}\label{prop:identity_for_small_q}
  Suppose $n\ge 2$, $0<q\le\frac{1}{n}$ and $\pi\sim\mu_{n,q}$. Then
  \begin{equation*}
    \P(\pi\text{ is not the identity})\approx nq.
  \end{equation*}
\end{proposition}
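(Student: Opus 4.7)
The plan is to reduce the problem to a direct estimate of the normalizing constant $Z_{n,q}$ from \eqref{eq:Z_formula}. Since the identity is the unique permutation with $\inv(\pi)=0$, we have $\mu_{n,q}(\mathrm{id})=1/Z_{n,q}$, and hence
\begin{equation*}
  \P(\pi\ne\mathrm{id}) \;=\; \frac{Z_{n,q}-1}{Z_{n,q}}.
\end{equation*}
It therefore suffices to show, in the regime $0<q\le 1/n$ with $n\ge 2$, that $Z_{n,q}-1\approx nq$ while $Z_{n,q}$ itself lies between $1$ and an absolute constant.

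For the upper bound on $Z_{n,q}$ I would apply $\log(1+x)\le x$ to the factorization $Z_{n,q}=\prod_{i=1}^n(1+q+\cdots+q^{i-1})$ from \eqref{eq:Z_formula}, obtaining
\begin{equation*}
  \log Z_{n,q}\;\le\;\sum_{i=1}^n(q+q^2+\cdots+q^{i-1})\;\le\;\frac{nq}{1-q}.
\end{equation*}
Because $q\le 1/n$ forces $nq/(1-q)\le 2$ for $n\ge 2$, this yields $Z_{n,q}\le e^2$ and, in particular, $1/Z_{n,q}\ge e^{-2}$.

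For the two-sided estimate on $Z_{n,q}-1$, the lower bound comes from retaining only the constant and linear terms in each factor, giving $Z_{n,q}\ge(1+q)^{n-1}$, and then Bernoulli's inequality $(1+q)^{n-1}\ge 1+(n-1)q$ produces $Z_{n,q}-1\ge (n-1)q\ge nq/2$. For the matching upper bound I would use $e^x-1\le x e^x$ with $x=\log Z_{n,q}$, which combined with the previous display gives $Z_{n,q}-1\le\log(Z_{n,q})\,Z_{n,q}\le 2e^2\,nq$. Dividing both bounds on $Z_{n,q}-1$ by $Z_{n,q}\in[1,e^2]$ yields $\P(\pi\ne\mathrm{id})\approx nq$. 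I do not anticipate any substantive obstacle: the entire argument is an elementary manipulation of the closed-form expression \eqref{eq:Z_formula} using only the inequalities $\log(1+x)\le x$, Bernoulli's inequality, and $e^x-1\le xe^x$, together with the input $q\le 1/n$ to keep $\log Z_{n,q}$ bounded.
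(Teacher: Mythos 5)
Your proof is correct. It rests on the same core reduction as the paper's: the identity is the unique permutation with no inversions, so $\P(\pi=\mathrm{id})=1/Z_{n,q}$, and everything comes down to elementary estimates on the explicit product \eqref{eq:Z_formula}. Where you differ is in how the estimates are organized. The paper writes $\P(\pi\neq\mathrm{id})=1-(1-q)^n/\prod_{i=1}^n(1-q^i)$ and bounds numerator and denominator via $1-x$ versus $e^{-x}$ comparisons; this gives the upper bound cleanly, but its lower bound only works for $n$ sufficiently large, so the paper patches the small-$n$ case separately by exhibiting a single-inversion permutation $\sigma$ with $\P(\pi=\sigma)\ge cq$. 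You instead write $\P(\pi\neq\mathrm{id})=(Z_{n,q}-1)/Z_{n,q}$, bound $Z_{n,q}\le e^2$ via $\log(1+x)\le x$ applied to the factorization $\prod_{i=1}^n(1+q+\cdots+q^{i-1})$, get the lower bound $Z_{n,q}-1\ge(1+q)^{n-1}-1\ge(n-1)q\ge nq/2$ from Bernoulli's inequality, and the upper bound $Z_{n,q}-1\le 2e^2nq$ from $e^x-1\le xe^x$; all steps hold uniformly for every $n\ge 2$, so no case split is needed. The payoff of your route is this uniformity and slightly tighter bookkeeping of constants; the paper's route has the minor advantage that the intermediate inequality $\P(\pi\neq\mathrm{id})\le 1-(1-q)^n$ is immediate without first controlling the size of $Z_{n,q}$. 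Both arguments are sound and of comparable length.
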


{\bf Policy on constants:} In what follows, $C$ and $c$ denote
positive numerical constants (independent of all other parameters)
whose value can change each time they occur (even inside the same
calculation), with the value of $C$ increasing and the value of $c$
decreasing. In contrast, the value of numbered constants, such as
$C_0$ or $c_0$, is fixed and will not change between occurrences.

%

\subsection{Techniques}\label{sec:techniques}
Previous work on the asymptotics of the longest increasing
subsequence followed two main approaches: either through analysis of
combinatorial asymptotics or by the probabilistic analysis of
systems of interacting particle processes. The combinatorial
approach to the longest increasing subsequence makes use of a bijection between permutations
and Young tableaux known as the Robinson-Schensted-Knuth (RSK)
correspondence \cite{Rob38,Sch61,Knu70}.  This bijection is intimately
related to the representation theory of the symmetric group
\cite{Jam78,Dia88}, the theory of symmetric functions \cite{Sta99}, and
the theory of partitions \cite{And76}. The uniform measure on
permutations induces the Plancherel measure on Young diagrams under
the RSK correspondence. Vershik and Kerov and Logan and Shepp
independently showed a limiting shape for diagrams under the
Plancherel measure and proved that
\begin{equation}\label{eq:unif_perm_E_LIS}
  \E(\LIS(\pi)) = 2\sqrt{n}+o(\sqrt{n})\quad\text{when $\pi$ is uniformly
  distributed}.
\end{equation}
This approach was extended much later in the groundbreaking work of
Baik, Deift and Johansson \cite{BaiDeiJoh99} who determined completely
the limiting distribution and fluctuations of the longest increasing
subsequence of a uniformly distributed permutation.

The second approach has been through the framework of interacting
particle processes. Hammersley \cite{Ham72} investigated ``Ulam's
problem'' of finding the constant in the expected length of the
longest monotone subsequence in a uniformly random permutation.
Implicit in this work was a certain one-dimensional interacting
particle process which Aldous and Diaconis \cite{AldDia99} call
Hammersley's process. Aldous and Diaconis gave hydrodynamical
limiting arguments for Hammersley's process to obtain an independent
proof of the result \eqref{eq:unif_perm_E_LIS}. This approach led to
other generalizations, such as the work of Deuschel and Zeitouni
\cite{DeuZei95} who found the leading behavior of $\E(\LIS(\pi))$ when
$\pi$ is a random permutation whose graphical representation is
obtained by putting independent and identically distributed points
in the plane.


Mueller and Starr \cite{MueSta11} were the first to consider the longest
increasing subsequence of a random Mallows permutation. Their work
focuses on the regime of parameters where $n(1-q) \to
\beta\in(-\infty,\infty)$ as $n\to\infty$. In this regime Starr
\cite{Sta09} developed a Botzmann-Gibbs formulation of the Mallows
measure and found a limiting density for the graphical
representation of the random permutation. Mueller and Starr relied
on this limiting density and applied similar techniques to those of
Deuschel and Zeitouni \cite{DeuZei95} to find the leading behavior of
$\E(\LIS(\pi))$.


Our analysis uses a third approach. In his paper, Mallows \cite{Mal57}
describes an iterative procedure for generating a
Mallows-distributed permutation. This procedure, which we term the
\emph{Mallows process}, is defined formally in
Section~\ref{sec:Mallows_process}. Informally, it may be described
as follows: A set of $n$ folders is put in a random order into a
drawer using the rule that each new folder is inserted at a random
position, pushing back all the folders behind it. The probability that
the $i$th folder is inserted at position $j$, for $1\le j\le i$, is
proportional to $q^{j-1}$, independently of all other folders. It is not
hard to check that after all $n$ folders have been placed in the
drawer, their positions have the $(n,1/q)$-Mallows distribution. Our
analysis consists of tracking the dynamics of the increasing and
decreasing subsequences throughout the evolution of this process.

%

\subsection{Reader's guide}
The remainder of the paper is organized as follows. In Section
\ref{sec:Mallows_process} we define the Mallows process formally and
derive some useful properties of the Mallows measure from it. In
Section \ref{sec:displacement} we bound the displacement of elements
in a random Mallows permutation, proving Theorem
\ref{thm:displacement}. Section \ref{sec:lis} is devoted to the
study of $\LIS(\pi)$. We establish there the large deviation bounds
for $\LIS(\pi)$ and determine its typical order of magnitude,
proving Theorem~\ref{thm:lis-largedev} and Proposition
\ref{prop:small_q_LIS_exp_bound}. In Section \ref{sec:wlln} we prove
the law of large numbers for $\LIS(\pi)$, establishing Theorem
\ref{thm:lp-convergence-lis}. In Section \ref{sec:lds} we study
$\LDS(\pi)$, establishing large deviation bounds for it and
determining its typical order of magnitude, proving
Theorem~\ref{thm:E(LDS)}, Theorem~\ref{thm:lds-largedev} and
Proposition~\ref{prop:identity_for_small_q}. In Section
\ref{sec:lis-var} we prove Proposition \ref{prop:var_prop}, giving a
simple bound on the variance of $\LIS(\pi)$ and showing a Gaussian
tail inequality. Finally, we end with some directions for further
research in Section \ref{sec:open-ques}.

\subsection{Acknowledgements}
We thank Elchanan Mossel for pointing out to us that the technique of proof
in \cite[Lemma 17]{BraMos09} simplifies our previous proof of the upper bound in
\eqref{eq:required_displacement_bound} and yields also the deviation
bound \eqref{eq:displacement_probability_bound}.

\section{The Mallows process}\label{sec:Mallows_process}

In this section we describe a random evolution process on
permutations, which we term the Mallows process. This process is
central to our later analysis of the length of monotone
subsequences. The process was known to Mallows \cite{Mal57}, and was
also used by Gnedin and Olshanski \cite{GneOls10,GneOls12} to study
variants and extensions of the Mallows measure to infinite groups of
permutations. The underlying idea is also useful in the analysis of
the number of inversions of a uniformly random permutation, e.g., as
in Feller \cite[Chap. X.6]{Fel68}.

Let $q>0$. The \emph{$q$-Mallows process} is a permutation-valued
stochastic process $(p_n)_{n \ge 1}$, where each $p_n \in S_n$. The
process is initialized by setting $p_1$ to be the (only) permutation
on one element. The process iteratively constructs $p_{n}$ from
$p_{n-1}$ and an independent random variable $p_n(n)$ distributed as
a truncated geometric. Precisely, letting $(p_n(n))$ be a sequence
of independent random variables with the distributions
\begin{equation}\label{eq:Mallows_folder_dist}
 \p(p_n(n) = j) :=  \frac{q^{j-1}}{1+q+\cdots+q^{n-1}} =
 \frac{(1-q)q^{j-1}}{1-q^n}
  \quad(1\le j\le n),
\end{equation}
each permutation $p_n$ is defined by
\begin{equation}\label{eq:Mallows_iteration_procedure}
  p_{n}(i) =
\begin{cases}
p_{n-1}(i)& p_{n-1}(i) < p_n(n)\\
p_{n-1}(i)+1 & p_{n-1}(i) \ge p_n(n)
\end{cases} \quad (1 \le i \le n-1).
\end{equation}
Alluding to our intuitive description in
Section~\ref{sec:techniques}, we may think of $p_n(i)$ as denoting
the {\em position} of the $i$th folder at time $n$ in the drawer. It
is clear by construction that $p_n$ is a permutation in $S_n$. Also,
note that for each $i$ and $n \ge i$, $p_n(i)$ is non-decreasing in
$n$. Below is an example to illustrate the process. For example, we
see that in the second step $n=2$, since the position of the second
folder is $1$, the position of the first folder becomes $2$. In
general, in step $n$, the position of a folder increases by $1$ if
its position in step $n-1$ is at or after the position where the
$n$th folder is inserted and otherwise it stays the same. We also
note the process $(p_n^{-1})$ which may be thought of as the
contents of the drawer at time $n$, in the intuitive description of
Section~\ref{sec:techniques}.
\[
\begin{array}{ccll}
n & p_n(n) & p_n & (p_n^{-1})\\
 1 & 1  & 1 & 1\\
 2   & 1 & 21 & 21\\
 3 & 2 & 312 &23 1\\
 4 & 4 & 3124  & 2314\\
 5 & 2 & 41352  & 25314\\
6  & 3 & 514623  & 256314\\
\end{array}
\]

\begin{lemma}\label{lem:Mallows_process_distribution}
Let $q>0$ and let $(p_n)_{n \ge 1}$ be the $q$-Mallows process. Then $p_n$ is distributed according to the Mallows distribution with parameter $1/q$.
\end{lemma}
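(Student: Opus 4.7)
The plan is induction on $n$; the base case $n=1$ is immediate. For the inductive step, fix a target permutation $\pi \in S_n$ and set $j := \pi(n)$. By the construction of the Mallows process, the event $\{p_n = \pi\}$ is the intersection $\{p_n(n) = j\} \cap \{p_{n-1} = \pi'\}$, where $\pi' \in S_{n-1}$ is the unique permutation obtained from $\pi$ by deleting folder $n$ and decreasing by one every position of $\pi$ that strictly exceeds $j$. Since $p_n(n)$ is independent of $p_{n-1}$ by definition of the process, this yields
\[
\P(p_n = \pi) = \P(p_n(n) = j) \cdot \P(p_{n-1} = \pi').
\]

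The heart of the proof is the combinatorial identity
\[
\inv(\pi) = \inv(\pi') + (n - j).
\]
I would establish it by splitting inversions of $\pi$ according to whether or not they involve the index $n$. For pairs $(i,k)$ with $i<k<n$, the operation $\pi'\mapsto\pi$ adds one uniformly to all values $\ge j$, which preserves strict inequalities between the values at positions $i$ and $k$ and therefore produces an inversion-preserving bijection with the inversions of $\pi'$. For pairs $(i,n)$, an inversion occurs iff $\pi(i) > j$; since $\pi(i) > j$ holds precisely when $\pi'(i) \ge j$, and $\pi'$ is a bijection onto $\{1,\ldots,n-1\}$, there are exactly $n-j$ such indices $i$.

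Plugging this identity into the inductive hypothesis $\P(p_{n-1}=\pi') = (1/q)^{\inv(\pi')}/Z_{n-1,1/q}$ and combining with \eqref{eq:Mallows_folder_dist} gives
\[
\P(p_n = \pi) = \frac{(1-q)q^{j-1}}{1-q^n} \cdot \frac{(1/q)^{\inv(\pi)}\,q^{n-j}}{Z_{n-1,1/q}} = \frac{(1-q)\,q^{n-1}}{(1-q^n)\,Z_{n-1,1/q}} \cdot (1/q)^{\inv(\pi)}.
\]
The key cancellation $q^{j-1}\cdot q^{n-j} = q^{n-1}$ removes all $\pi$-dependence from the prefactor, so $\P(p_n = \pi) \propto (1/q)^{\inv(\pi)}$ and hence $p_n \sim \mu_{n,1/q}$. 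The only delicate point in this plan is the inversion bookkeeping in the middle paragraph; everything else is algebraic. As a byproduct one reads off the recursion $Z_{n,1/q} = (1-q^n)\,Z_{n-1,1/q}/((1-q)q^{n-1})$, which iterates to the product formula \eqref{eq:Z_formula}.
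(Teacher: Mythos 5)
Your proposal is correct and follows essentially the same route as the paper: induction on $n$, the same deletion map $\pi\mapsto\pi'$ (the paper's $\sigma_{n+1}\mapsto\sigma_n$), the same identity $\inv(\pi)=\inv(\pi')+n-\pi(n)$, and the same cancellation $q^{j-1}q^{n-j}=q^{n-1}$ in the prefactor. The only difference is that you spell out the inversion bookkeeping, which the paper states without proof.
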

\begin{proof}
The claim is trivial for $n=1$. Assume by induction that for any
$\sigma_n \in S_n$, $\p(p_n = \sigma_n) \propto q^{-\inv(\sigma_n)}$
and let us prove the same for $n+1$. Fix a permutation $\sigma_{n+1}
\in
S_{n+1}$. 
For $1 \le i  \le n$, define a permutation $\sigma_n\in S_n$ by
\begin{align*}
\sigma_n(i) :=
\begin{cases}
\sigma_{n+1}(i) -1 & \text{ if } \sigma_{n+1}(i) > \sigma_{n+1}(n+1)\\
\sigma_{n+1}(i) & \text{ if } \sigma_{n+1}(i) < \sigma_{n+1}(n+1)\\
\end{cases}
\end{align*}
It follows from the definition of the Mallows process that $p_{n+1}
= \sigma_{n+1}$ if and only if $p_{n+1}(n+1)=\sigma_{n+1}(n+1)$ and
$p_n = \sigma_n$. Noting that $\inv(\sigma_{n+1}) = \inv(\sigma_n) +
n+1-\sigma_{n+1}(n+1)$,
the induction hypothesis implies that
\begin{align*}
 \p(p_{n+1}  = \sigma_{n+1}) & = \p(p_n = \sigma_n)\cdot\p(p_{n+1}(n+1) = \sigma_{n+1}(n+1)) \\
 & = \frac{q^{-\inv(\sigma_n)}}{Z_{n,1/q}} \cdot \frac{q^{\sigma_{n+1}(n+1)-1}}{1+q+\cdots+q^n}  \\
 & = \frac{q^{-\inv(\sigma_n)}}{Z_{n,1/q}} \cdot \frac{(1/q)^{n-\sigma_{n+1}(n+1)+1}}{1+(1/q)+\cdots+(1/q)^n} \propto
 q^{-\inv(\sigma_{n+1})}.\qedhere
\end{align*}
\end{proof}
As a by-product, the above recursion also shows that the formula
\eqref{eq:Z_formula} for the normalizing constant holds. Recall that
$\pi^R$, the reversal of a permutation $\pi$, is defined by
$\pi^R(i) = \pi(n+1-i)$.
\begin{lemma}\label{lem:reversal_inverse_Mallows}
For any $n\ge 1$ and $q>0$, if $\pi\sim\mu_{n,q}$ then
$\pi^R\sim\mu_{n,1/q}$ and $\pi^{-1}\sim\mu_{n,q}$.
\end{lemma}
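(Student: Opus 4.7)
The plan is to reduce both statements to well-known identities relating $\inv(\pi)$, $\inv(\pi^R)$, and $\inv(\pi^{-1})$, and then substitute into the definition \eqref{eq:mu_n_q_def}. Since the maps $\pi \mapsto \pi^R$ and $\pi \mapsto \pi^{-1}$ are bijections on $S_n$, once the inversion count of the image is expressed in terms of $\inv(\pi)$, the distributional claims follow by direct calculation of $\mathbb{P}(\pi^R = \sigma) = \mathbb{P}(\pi = \sigma^R)$ and similarly for the inverse.

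For the reversal statement, the first step is to verify the identity $\inv(\pi^R) = \binom{n}{2} - \inv(\pi)$. The cleanest way is to note that $(i,j)$ with $i<j$ is an inversion of $\pi^R$ exactly when $\pi(n+1-i) > \pi(n+1-j)$; reindexing via $i' := n+1-j$, $j' := n+1-i$ gives $i' < j'$ and $\pi(i') < \pi(j')$, so inversions of $\pi^R$ correspond bijectively to the non-inversions of $\pi$, of which there are $\binom{n}{2} - \inv(\pi)$. Substituting into $\mu_{n,q}(\pi) \propto q^{\inv(\pi)}$ and using that $q^{\binom{n}{2}}$ is a constant independent of $\pi$, we obtain $\mathbb{P}(\pi^R = \sigma) \propto q^{-\inv(\sigma)} = (1/q)^{\inv(\sigma)}$, which is the density of $\mu_{n,1/q}$.

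For the inverse statement, I would prove the stronger fact $\inv(\pi^{-1}) = \inv(\pi)$. A pair $(i,j)$ with $i<j$ is an inversion of $\pi^{-1}$ iff $\pi^{-1}(i) > \pi^{-1}(j)$; setting $a := \pi^{-1}(j)$ and $b := \pi^{-1}(i)$ yields $a < b$ with $\pi(a) = j > i = \pi(b)$, so $(a,b)$ is an inversion of $\pi$. This correspondence is bijective, giving equal inversion counts. Substituting into the Mallows density shows $\mathbb{P}(\pi^{-1} = \sigma) \propto q^{\inv(\sigma^{-1})} = q^{\inv(\sigma)}$, so $\pi^{-1} \sim \mu_{n,q}$.

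Neither step presents a real obstacle; these are elementary bijective arguments on the symmetric group, and no appeal to the Mallows process of the previous lemma is needed. The only care required is in the reversal calculation to get the constant $\binom{n}{2}$ correctly and to notice that it disappears into the normalizing constant when we renormalize, yielding parameter $1/q$ rather than just $q^{-1}$ times a reweighting.
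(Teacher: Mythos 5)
Your proposal is correct and follows essentially the same route as the paper: both rest on the facts that reversal and inversion are bijections of $S_n$ together with the identities $\inv(\pi^R)=\binom{n}{2}-\inv(\pi)$ and $\inv(\pi^{-1})=\inv(\pi)$, which the paper states as immediate and you verify by the standard bijective reindexing. No gaps; your write-up simply spells out details the paper leaves to the reader.
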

\begin{proof}
  The lemma is immediate upon noting that both taking reversal and
  taking inverse are bijections on $S_n$, and that $\inv(\pi^R) = {n \choose 2} -
  \inv(\pi)$ and $\inv(\pi^{-1}) = \inv(\pi)$.
\end{proof}
This lemma allows us to define four different permutations related
to the $q$-Mallows process, all having the Mallows distribution
$\mu_{n,q}$.
\begin{corollary}\label{cor:perms-that-are-mallows}
Let $q>0$ and let $(p_n)_{n\ge 1}$ be the $q$-Mallows process. Then
each of the following permutations is distributed as $\mu_{n,q}$.
\begin{enumerate}[(i)]
\item $\pi:=p_n^R$. That is, $\pi(i) = p_n(n+1-i)$.
\item $\pi:=(p_n^R)^{-1}$. That is, $\pi(i)=n+1 - p_n^{-1}(i)$.
\item $\pi:=(p_n^{-1})^R$. That is, $\pi(i)=p_n^{-1}(n+1-i)$.
\item $\pi:=((p_n^{-1})^R)^{-1}$. That is, $\pi(i) =
n+1-p_n(i)$.
\end{enumerate}
\end{corollary}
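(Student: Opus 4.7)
The plan is to combine the two preceding lemmas mechanically. Lemma \ref{lem:Mallows_process_distribution} gives $p_n\sim\mu_{n,1/q}$, and Lemma \ref{lem:reversal_inverse_Mallows} tells us that reversal flips the parameter from $q$ to $1/q$ while taking inverses preserves the parameter. So for each of the four permutations, I would verify that the sequence of reversals/inverses applied to $p_n$ amounts to an odd number of reversals, thereby converting the parameter from $1/q$ back to $q$, possibly composed with inverses which do not change the distribution. The only other content is the routine algebraic identification of $\pi(i)$ in each case.

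In detail: for (i), $p_n\sim\mu_{n,1/q}$ directly gives $p_n^R\sim\mu_{n,q}$ by Lemma \ref{lem:reversal_inverse_Mallows}, and the formula $\pi(i)=p_n(n+1-i)$ is just the definition of reversal. For (ii), I would apply the inverse identity of Lemma \ref{lem:reversal_inverse_Mallows} to (i) to conclude $(p_n^R)^{-1}\sim\mu_{n,q}$; the explicit formula follows from the chain $(p_n^R)^{-1}(i)=j \iff p_n^R(j)=i \iff p_n(n+1-j)=i \iff j=n+1-p_n^{-1}(i)$. For (iii), I would first use the inverse identity to deduce $p_n^{-1}\sim\mu_{n,1/q}$, then the reversal identity to obtain $(p_n^{-1})^R\sim\mu_{n,q}$; the formula $\pi(i)=p_n^{-1}(n+1-i)$ is the definition of reversal. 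For (iv), applying the inverse identity once more to (iii) gives the distribution; the formula is obtained by setting $\sigma=(p_n^{-1})^R$ and computing $\sigma^{-1}(i)=j \iff p_n^{-1}(n+1-j)=i \iff n+1-j=p_n(i) \iff j=n+1-p_n(i)$.

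There is no real obstacle here: the corollary is essentially a bookkeeping statement, and the main thing to watch for is getting the parity of reversals right so that the final parameter is $q$ rather than $1/q$. Since the proof reduces to four applications of the two lemmas and four short identifications from the definitions, I would present it in a few lines, perhaps as a table summarizing which lemma is invoked at each step, rather than writing out all four cases in prose.
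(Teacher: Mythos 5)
Your proposal is correct and matches the paper's intent exactly: the paper states this corollary as an immediate consequence of Lemma~\ref{lem:Mallows_process_distribution} ($p_n\sim\mu_{n,1/q}$) and Lemma~\ref{lem:reversal_inverse_Mallows} (reversal flips the parameter, inverse preserves it), with no further argument given. Your case-by-case bookkeeping, including the explicit formula verifications for (ii) and (iv), fills in precisely the routine details the paper leaves implicit.
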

%
%
%
%
This corollary will be useful in the sequel, allowing us to
prove results about the Mallows distribution by choosing from the
above list a convenient coupling of the Mallows distribution and the
Mallows process.

\subsection{Basic properties of the Mallows
process}\label{sec:Mallows_process_basic_properties}

In this section we let $q$ be an arbitrary positive number and let
$(p_n)$ be the $q$-Mallows process. Let $I=(i_1,\ldots,i_k)$ be an
increasing sequence of indices and let $\pi$ be any permutation. Let
$\pi_I \in S_{k}$ denote the induced relative ordering of $\pi$
restricted to $I$. That is, $\pi_I(j)>\mathscr \pi_I(k)$ if and only
if $\pi(i_j)>\pi(i_k)$. The following fact is clear from the
definition of the Mallows process.
\begin{fact}\label{fact:indep-of-orderings-on-blocks}
Let $I=(i_1,\ldots,i_k)$ be an increasing sequence and let $n\ge
i_k$. Then $(p_n)_I$ is a function only of
$p_{i_1}(i_1),p_{i_1+1}(i_1+1),\ldots,p_{i_k-1}(i_k
-1),p_{i_k}(i_k)$. In other words, $(p_n)_I$ is independent of the
set of $(p_i)_i$, $i<i_1$ or $i>i_k$.
\end{fact}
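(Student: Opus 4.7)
The plan is to split the claim into two ingredients: a stability observation which says that the relative ordering $(p_t)_I$ does not change once all of the elements $i_1,\ldots,i_k$ have been inserted (i.e., for $t\ge i_k$); and a measurability observation which says that $(p_{i_k}(i_a))_{a=1}^{k}$ is a deterministic function of the insertion positions $p_{i_1}(i_1),p_{i_1+1}(i_1+1),\ldots,p_{i_k}(i_k)$. Combining the two then identifies $(p_n)_I$ as a function of these specific insertion positions, and independence follows immediately from the fact that the random variables $(p_j(j))_{j\ge 1}$ are mutually independent by construction of the process.

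For the first ingredient, I would prove by induction on $t$ from $i_k$ to $n$ that the relative ordering of $(p_t(i_1),\ldots,p_t(i_k))$ is the same as that of $(p_{i_k}(i_1),\ldots,p_{i_k}(i_k))$. The induction step reduces to checking that a single insertion preserves pairwise order: fix $a\ne b$ with $p_t(i_a)<p_t(i_b)$; the update rule \eqref{eq:Mallows_iteration_procedure} either leaves both positions unchanged (if $p_{t+1}(t+1)>p_t(i_b)$), raises both by one (if $p_{t+1}(t+1)\le p_t(i_a)$), or raises only the larger by one (if $p_t(i_a)<p_{t+1}(t+1)\le p_t(i_b)$), and in each case the inequality persists.

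For the second ingredient, I would argue by induction on $t$ in the range $i_a\le t\le i_k$ that $p_t(i_a)$ is a function of $p_{i_a}(i_a),p_{i_a+1}(i_a+1),\ldots,p_t(t)$. The base case is $p_{i_a}(i_a)$ itself, and the inductive step is just the update rule, which gives $p_t(i_a)=p_{t-1}(i_a)+\mathbbm{1}[p_t(t)\le p_{t-1}(i_a)]$. Applying this for each $a=1,\ldots,k$ at $t=i_k$ yields the claim. Putting the two ingredients together, $(p_n)_I$ is determined by the relative ordering of $(p_{i_k}(i_a))_{a=1}^{k}$, which in turn is a function of $p_{i_1}(i_1),\ldots,p_{i_k}(i_k)$, and independence from $\{p_j(j):j<i_1\text{ or }j>i_k\}$ then follows from the independence of the insertion variables.

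There is no real obstacle here; the only point that needs care is the case analysis in the stability step, and in particular making sure that the strict inequality is preserved when an insertion lands strictly between two already-placed elements.
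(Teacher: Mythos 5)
Your proof is correct, and it takes the only natural route: the paper states this Fact without proof (``clear from the definition of the Mallows process''), and your two ingredients --- that a single insertion preserves the pairwise order of already-placed elements, and that each $p_{i_k}(i_a)$ is determined recursively by the insertion variables $p_j(j)$ for $i_a\le j\le i_k$ --- are precisely the details being suppressed. The case analysis in your stability step and the update identity $p_t(i_a)=p_{t-1}(i_a)+{\mathbbm 1}_{(p_t(t)\le p_{t-1}(i_a))}$ both match the process definition \eqref{eq:Mallows_iteration_procedure}, so nothing is missing.
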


\begin{lemma} \label{lem:indep-seqs}(Independence of induced
  orderings) Let $I=(i_1,\ldots,i_k)$ and $I'= (i'_1,\ldots,i'_\ell)$
  be two increasing sequences such that $i_k < i_1'$. Let $\pi \sim \mu_{n,q}$ for $n\ge i'_\ell$. Then, $\pi_I$ and $\pi_{I'}$ are independent.
\end{lemma}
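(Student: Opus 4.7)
The strategy is to transport the problem to the Mallows process, where independence is visible from the construction. Since the $q$-Mallows process $(p_n)$ produces $p_n \sim \mu_{n,1/q}$, I would instead work with the $(1/q)$-Mallows process $(\tilde p_n)$, so that $\tilde p_n \sim \mu_{n,q}$ by Lemma~\ref{lem:Mallows_process_distribution}. This gives a concrete coupling $\pi = \tilde p_n$ to which I can apply Fact~\ref{fact:indep-of-orderings-on-blocks}.

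The main step is then a direct application of Fact~\ref{fact:indep-of-orderings-on-blocks}. That fact says that the induced ordering $\pi_I=(\tilde p_n)_I$ is a deterministic function of the random variables
\[
\tilde p_{i_1}(i_1),\,\tilde p_{i_1+1}(i_1+1),\,\ldots,\,\tilde p_{i_k}(i_k),
\]
and similarly $\pi_{I'}=(\tilde p_n)_{I'}$ is a deterministic function of
\[
\tilde p_{i'_1}(i'_1),\,\tilde p_{i'_1+1}(i'_1+1),\,\ldots,\,\tilde p_{i'_\ell}(i'_\ell).
\]
Because $i_k<i'_1$, the two index sets $\{i_1,i_1+1,\ldots,i_k\}$ and $\{i'_1,i'_1+1,\ldots,i'_\ell\}$ are disjoint. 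Since the variables $(\tilde p_m(m))_{m\ge 1}$ are independent by construction of the Mallows process, the two families of variables above are independent, and so are their deterministic images $\pi_I$ and $\pi_{I'}$.

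I expect no real obstacle: the whole content is already packaged in Fact~\ref{fact:indep-of-orderings-on-blocks}, and the only care needed is the bookkeeping point of using the $(1/q)$-Mallows process rather than the $q$-Mallows process, so that the distribution of $\tilde p_n$ is exactly $\mu_{n,q}$ and the coupling is legitimate. One could alternatively couple via $\pi = p_n^R$ with $(p_n)$ the $q$-Mallows process, applying Fact~\ref{fact:indep-of-orderings-on-blocks} to the reversed index sets $J=\{n+1-i_k,\ldots,n+1-i_1\}$ and $J'=\{n+1-i'_\ell,\ldots,n+1-i'_1\}$, which remain in the correct order (all of $J'$ precedes all of $J$); this route also works but is slightly more notation-heavy.
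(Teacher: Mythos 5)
Your proof is correct and follows essentially the same route as the paper: reduce to a Mallows-process coupling, then combine Fact~\ref{fact:indep-of-orderings-on-blocks} with the independence of the inserted-position variables $(p_m(m))$ over the disjoint index ranges $[i_1,i_k]$ and $[i'_1,i'_\ell]$. The only cosmetic difference is the choice of coupling: the paper uses the $q$-Mallows process via $\pi(i)=n+1-p_n(i)$ from Corollary~\ref{cor:perms-that-are-mallows}, while you run the process with parameter $1/q$ so that $\pi=\tilde p_n$ directly; both are legitimate.
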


\begin{proof}
Using Corollary~\ref{cor:perms-that-are-mallows}, we couple $\pi$
with $(p_n)$ so that $\pi(i) = n+1 - p_n(i)$ for all $i$.
By the definition of the Mallows process, the variables $(p_i(i))$
are independent. By Fact \ref{fact:indep-of-orderings-on-blocks},
$\pi_I$ and $\pi_{I'}$ are functions of independent variables and
are therefore independent.
\end{proof}

For a sequence of indices $I=(i_1, \ldots, i_m)$ and an integer $b$,
define the sequence $I+b:=(i_1+b, \ldots, i_m+b)$.
\begin{lemma}\label{lem:shift}(Translation invariance)
Let $I=(i_1, \ldots, i_k)$ be an increasing sequence and let
$\pi\sim \mu_{n,q}$. Then, for any integer $1 \le b \le n-i_k$,
$\pi_I$ and $\pi_{I+b}$ have the same distribution. That is, for any
$\omega \in S_k$,
\[
\p(\pi_I = \omega) = \p(\pi_{I+b} = \omega).
\]
\end{lemma}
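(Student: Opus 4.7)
The plan is to reduce the lemma to the stronger intermediate claim that, for $\pi \sim \mu_{n,q}$ and any $1 \le a \le b \le n$, the induced ordering $\pi_{(a,a+1,\ldots,b)}$ of the consecutive block $\pi(a), \pi(a+1), \ldots, \pi(b)$ is itself distributed as $\mu_{b-a+1,q}$. Granted this, the lemma follows in one line: $\pi_I$ is a deterministic function of $\pi_{(i_1,i_1+1,\ldots,i_k)}$ (namely its further induced ordering at the subsequence $1, i_2-i_1+1, \ldots, i_k-i_1+1$), and the same function applied to $\pi_{(i_1+b,i_1+b+1,\ldots,i_k+b)}$ yields $\pi_{I+b}$; since the two blocks have identical distribution $\mu_{i_k-i_1+1, q}$, so do $\pi_I$ and $\pi_{I+b}$.

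To prove the intermediate claim directly from $\mu_{n,q}(\pi) \propto q^{\inv(\pi)}$, I would fix a target $\tilde\pi \in S_{b-a+1}$ and compute
\[
\sum_{\pi \in S_n\,:\, \pi_{(a,\ldots,b)} = \tilde\pi} q^{\inv(\pi)}.
\]
Every such $\pi$ is specified by the value set $V := \pi(\{a,\ldots,b\})$ together with a bijection from $\{1,\ldots,n\} \setminus \{a,\ldots,b\}$ onto $\{1,\ldots,n\} \setminus V$; given $V$ and $\tilde\pi$, the internal arrangement of $\pi$ on $\{a,\ldots,b\}$ is forced. Split $\inv(\pi)$ into inversions with both indices in $\{a,\ldots,b\}$ (contributing exactly $\inv(\tilde\pi)$), both outside, and mixed. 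The mixed count equals
\[
\sum_{j>b} \bigl|\{v \in V : v > \pi(j)\}\bigr| \;+\; \sum_{i<a} \bigl|\{v \in V : v < \pi(i)\}\bigr|,
\]
which depends on $V$ and on the outside values of $\pi$, but not on how $V$ is permuted among the positions $\{a,\ldots,b\}$. Consequently the weighted sum factors as $q^{\inv(\tilde\pi)} \cdot K$ with $K$ independent of $\tilde\pi$, and normalisation delivers $\pi_{(a,\ldots,b)} \sim \mu_{b-a+1,q}$.

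The only step that is not pure bookkeeping is the observation that the mixed-inversion count is oblivious to the internal arrangement of $V$ inside $\{a,\ldots,b\}$: for each fixed outside position $j$, the number of mixed inversions it contributes depends only on how many elements of $V$ lie above (or below) $\pi(j)$, not on which positions in $\{a,\ldots,b\}$ they occupy. Once this is noted, the $q^{\inv(\tilde\pi)}$ factor separates out cleanly and the lemma drops out. A Mallows-process-based proof using Fact~\ref{fact:indep-of-orderings-on-blocks} would also be possible, but the weighted-counting route is shorter and avoids having to reconcile the differing ranges of the truncated geometrics $p_t(t)$ for $t$ shifted by $b$.
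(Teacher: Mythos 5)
Your proof is correct, but it takes a genuinely different route from the paper. The paper first reduces to the case $b=1$ and $I=(1,\ldots,n-1)$ and then invokes the existence of a (unique) inversion-preserving bijection $T$ of $S_n$ with $(T(\pi))_{I+1}=\pi_I$, leaving the construction of $T$ to the reader; it then deduces the consecutive-block statement (Corollary~\ref{cor:induced_permutation_Mallows}) from the lemma afterwards, via the Mallows process. You instead prove the block statement first, directly from $\mu_{n,q}(\pi)\propto q^{\inv(\pi)}$, by splitting $\inv(\pi)$ into internal, external and mixed inversions and observing that the mixed count depends only on the value set $V$ and the outside values, not on the internal arrangement, so the weight factors as $q^{\inv(\tilde\pi)}\cdot K$ with $K$ independent of $\tilde\pi$; translation invariance then follows because $\pi_I$ and $\pi_{I+b}$ are the same deterministic function of two identically distributed consecutive-block orderings. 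There is no circularity, since you do not use Corollary~\ref{cor:induced_permutation_Mallows} as stated in the paper but reprove it from scratch. What your route buys is a self-contained argument that makes the key cancellation explicit (where the paper's ``straightforward to see'' bijection is terse) and that delivers the stronger block-marginal fact without any Mallows-process machinery; what the paper's route buys is brevity and consistency with the process viewpoint used throughout, with the block corollary then obtained for free from the lemma plus the definition of the process.
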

\begin{proof}
Observe that we can make the following simplifying assumptions.
First, we may assume that $b=1$ since then the claim follows by
applying the result $b$ times. Second, under the assumption $b=1$,
$I$ is contained in $(1,2,\ldots, n-1)$ and hence we may deduce the
lemma with the given $I$ from the lemma with $I=(1,2,\ldots, n-1)$.
%
%

Assume then that $b=1$ and $I=(1,2,\ldots, n-1)$.
It is straightforward to see that there exists a unique bijection
$T$ from $S_n$ to itself which preserves the number of inversions
(and hence the Mallows distribution), such that $(T(\pi))_{I+1} =
\pi_I$. This establishes the lemma.
\end{proof}

It is simple to check that the above fact is not necessarily true
for sequences which are not translates. Suppose $\pi\sim\mu_{3,q}$.
By explicit
calculation, 
\begin{equation*}
\p(\pi(2)>\pi(1)) =
\frac{1+q+q^2}{Z_{3,q}}\quad\text{whereas}\quad\p(\pi(3)>\pi(1)) =
\frac{1+2q}{Z_{3,q}},
\end{equation*}

so that the probabilities are different for all $q\neq 1$.

One corollary of translation invariance is that the permutation
induced on any sequence of consecutive elements is distributed like a shorter
Mallows permutation.

\begin{corollary}\label{cor:induced_permutation_Mallows}
Let $I=(i,i+1,\ldots,i+m-1)\subseteq[n]$ be a sequence of
consecutive elements. If $\pi\sim\mu_{n,q}$ then
$\pi_I\sim\mu_{m,q}$.
\end{corollary}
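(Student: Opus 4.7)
The plan is to combine translation invariance with the iterative structure of the Mallows process. First, I would apply Lemma~\ref{lem:shift} to reduce to the case $I=(1,2,\ldots,m)$: the lemma gives $\pi_I \stackrel{d}{=} \pi_{(1,2,\ldots,m)}$ for any consecutive block of length $m$ in $[n]$, so it suffices to show $\pi_{(1,2,\ldots,m)} \sim \mu_{m,q}$.

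For the main step, I would choose a coupling that realizes the block $(1,2,\ldots,m)$ as the natural projection in a Mallows process. Rather than using one of the reversal-based identifications in Corollary~\ref{cor:perms-that-are-mallows}, the cleanest choice is to take $\pi = \tilde p_n$, where $(\tilde p_n)_{n\ge 1}$ is the $(1/q)$-Mallows process; Lemma~\ref{lem:Mallows_process_distribution} then gives $\pi \sim \mu_{n,q}$ directly. The key observation is that the update rule \eqref{eq:Mallows_iteration_procedure} preserves the relative order of all previously placed values: if $\tilde p_{k-1}(i) > \tilde p_{k-1}(j)$, then each of these two values is either left unchanged or incremented by $1$, and whenever the smaller one $\tilde p_{k-1}(j)$ is incremented (i.e.\ $\tilde p_{k-1}(j) \ge \tilde p_k(k)$) the larger one $\tilde p_{k-1}(i)$ is automatically incremented as well, so the strict inequality is preserved. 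Iterating this from step $m$ up to step $n$ shows that the relative order of $\tilde p_n(1),\ldots,\tilde p_n(m)$ coincides with the relative order of $\tilde p_m(1),\ldots,\tilde p_m(m)$, which is nothing but $\tilde p_m$ viewed as an element of $S_m$. Therefore $\pi_{(1,2,\ldots,m)} = \tilde p_m$, and a second application of Lemma~\ref{lem:Mallows_process_distribution} yields $\tilde p_m \sim \mu_{m,q}$, completing the proof.

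No step here presents a genuine obstacle; the entire argument is really a matter of picking the coupling so that the consecutive prefix $(1,\ldots,m)$ aligns with the Mallows-process projection $\tilde p_n \mapsto \tilde p_m$. The one point that must be verified carefully is the order-preservation property of~\eqref{eq:Mallows_iteration_procedure}, which is precisely what fails for non-consecutive index sets and explains why the corresponding statement is false in that generality (as the $3$-element counterexample preceding Corollary~\ref{cor:induced_permutation_Mallows} illustrates).
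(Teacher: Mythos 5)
Your proposal is correct and follows essentially the same route as the paper: reduce to the prefix $(1,\ldots,m)$ via the translation invariance of Lemma~\ref{lem:shift}, and then observe that for the prefix the induced ordering is exactly the Mallows process at time $m$, which by Lemma~\ref{lem:Mallows_process_distribution} has the right distribution. The only cosmetic differences are that the paper works with the $q$-process and invokes "$q$ is arbitrary" rather than using the $(1/q)$-process directly, and that you spell out the order-preservation property of \eqref{eq:Mallows_iteration_procedure} which the paper subsumes under "the definition of the Mallows process."
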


\begin{proof}
Since $q$ is arbitrary, it suffices to prove the corollary with
$\pi$ replaced by $p_n$, so that $q$ is replaced by $1/q$. For
$i=1$, the claim follows simply by the definition of the Mallows
process. That is, since $\pi_{I} = p_m \sim \mu_{m,1/q}$.  For
$i>1$, the claim follows by the translation invariance given by
Lemma \ref{lem:shift}.
\end{proof}

\txtred{
\begin{remark} One can also construct a Mallows permutation indexed by the infinite sets $\mathbb N$ or $\mathbb Z$ \cite{GneOls10,GneOls12}. A version of Corollary 2.7 would still be valid in this case, yielding the finite Mallows distribution as an induced permutation of the infinite one. The infinite permutation has the advantage that it is constructed out of a sequence of i.i.d. geometric random variables rather than just independent truncated geometric variables as in the finite construction. However, the fact that the geometric random variables are unbounded complicates some aspects of our proofs and in this paper we chose to work only in the finite setting.
\end{remark}
}

\section{The Displacement of an element in a Mallows permutation}\label{sec:displacement}
In this section we prove Theorem~\ref{thm:displacement}. Our proof
of the upper bounds follows that of \cite[Lemma 17]{BraMos09}, with
slightly more precise estimates.

Fix $0<q<1$. Recall the $q$-Mallows process $(p_i)$ from
Section~\ref{sec:Mallows_process}, defined for all $i\ge 1$. We
first prove the upper bounds in the theorem. Fix $n\ge 1$ and
consider the permutation $\pi$ defined by $\pi(i):=n+1-p_n(i)$,
which by Corollary~\ref{cor:perms-that-are-mallows} is distributed
according to $\mu_{n,q}$. Note first that for all $1\le i\le n$,
  \begin{equation*}
    \pi(i) - i = n+1 - p_n(i) - i =
    n-i-p_n(i)+p_i(i) - (p_i(i) - 1).
  \end{equation*}
  Thus, since $p_i(i)\ge 1$ and $p_n(i) - p_i(i) \le n-i$, we have
  \begin{equation}\label{eq:pi_lower_displacement_bound}
    |\pi(i) - i|{\mathbbm 1}_{(\pi(i)-i<0)}\le p_i(i)-1\quad\text{ for }1\le
    i\le n.
  \end{equation}
Similarly, let $\pi'$ be defined by $\pi'(i):=p_n(n+1-i)$, so that
$\pi'\sim\mu_{n,q}$ by
  Corollary~\ref{cor:perms-that-are-mallows}. For all $1\le i\le n$,
  \begin{equation*}
    \pi'(n+1-i) - (n+1-i) = p_n(i) - (n+1-i) =
    -(n-i-p_n(i)+p_i(i)) + (p_i(i) - 1).
  \end{equation*}
  Thus, again since $p_i(i)\ge 1$ and $p_n(i) - p_i(i) \le n-i$, we have
  \begin{equation*}
    |\pi'(n+1-i) - (n+1-i)|{\mathbbm 1}_{(\pi'(n+1-i) - (n+1-i)>0)}\le
    p_i(i)-1,
  \end{equation*}
  and exchanging the roles of $i$ and $n+1-i$ we obtain
  \begin{equation}\label{eq:pi'_upper_displacement_bound}
    |\pi'(i) - i|{\mathbbm 1}_{(\pi'(i) - i>0)}\le
    p_{n+1-i}(n+1-i)-1\quad\text{ for } 1\le i\le n.
  \end{equation}
  Putting together \eqref{eq:pi_lower_displacement_bound} and
  \eqref{eq:pi'_upper_displacement_bound}, and recalling that $\pi,
  \pi'\sim\mu_{n,q}$ we conclude that for all $1\le i\le n$
  and integer $t\ge 1$,
  \begin{equation}\label{eq:pi_displacement_probability_first_bound}
  \P(|\pi(i)-i|\ge t)= \P(\pi(i)-i\ge t)+\P(\pi(i)-i\le -t)\le
  \P(p_{n+1-i}(n+1-i)\ge t+1) + \P(p_i(i)\ge t+1).
\end{equation}
Now recall from \eqref{eq:Mallows_folder_dist} that $p_j(j)$ has the
distribution of a geometric random variable with parameter $1-q$,
conditioned to be at most $j$. In particular, $p_j(j)$ is
stochastically dominated by this geometric random variable and thus
\begin{equation}\label{eq:p_i_i_bound}
  \P(p_j(j)\ge t+1)\le q^t\quad\text{ for } 1\le
  j\le n\text{ and integer }t\ge 1.
\end{equation}
Putting together \eqref{eq:pi_displacement_probability_first_bound}
and \eqref{eq:p_i_i_bound} yields
\eqref{eq:displacement_probability_bound}. Thus, the upper bound of
\eqref{eq:required_displacement_bound} follows since $|\pi(i)-i| \le n-1$ and
\begin{equation*}
  \E|\pi(i)-i|=\sum_{t=1}^\infty \P(|\pi(i)-i|\ge t)\le \sum_{t=1}^\infty
  2q^t=\frac{2q}{1-q}.
\end{equation*}

Next we derive a lower bound on the displacement. This is done in
the next three claims. We start by observing a monotonicity property
of the Mallows process. Let
\begin{equation*}
A = \{(a_1,a_2,\ldots)\,:\, a_j\in\{1,\ldots, j\}\}.
\end{equation*}
By definition of the Mallows process, for each $n$, the permutation
$p_n$ is a function of the vector $(p_1(1), \ldots, p_n(n))$, whose
elements satisfy $p_j(j)\in\{1,\ldots, j\}$. For $a\in A$, denote by
$p_n^a$ the permutation $p_n$ resulting from taking $p_j(j) = a_j$.
\begin{lemma}\label{lem:monotonicity_Mallows_process}
  For each $n\ge 1$ and $1\le j\le n$, $p_n^a(j)$ is increasing in $a_j$. That is, if $a, a'\in A$
  satisfy $a_k = a'_k$ for all $k\neq j$ and $a_j>a'_j$ then
  $p_n^a(j)>p_n^{a'}(j)$.
\end{lemma}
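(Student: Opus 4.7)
My plan is to prove the lemma by tracking the trajectory of the single index $j$ through the steps $k = j, j+1, \ldots, n$ of the two coupled Mallows processes, and showing by induction that the gap between the positions of element $j$ in the two processes never decreases.

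More concretely, write $p := p^{a}$ and $p' := p^{a'}$, and for $k \ge j$ set
\begin{equation*}
d_k := p_k(j) - p'_k(j).
\end{equation*}
At time $k = j$ we have $p_j(j) = a_j$ and $p'_j(j) = a'_j$, so $d_j = a_j - a'_j \ge 1$ by hypothesis. I would then examine how $d_k$ evolves under one step of the process. By \eqref{eq:Mallows_iteration_procedure}, and using $a_k = a'_k$ for $k > j$,
\begin{equation*}
p_k(j) = p_{k-1}(j) + \mathbbm{1}_{\{p_{k-1}(j)\ge a_k\}}, \qquad p'_k(j) = p'_{k-1}(j) + \mathbbm{1}_{\{p'_{k-1}(j)\ge a_k\}},
\end{equation*}
so
\begin{equation*}
d_k - d_{k-1} = \mathbbm{1}_{\{p_{k-1}(j)\ge a_k\}} - \mathbbm{1}_{\{p'_{k-1}(j)\ge a_k\}}.
\end{equation*}

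The key observation is that, provided $p_{k-1}(j) \ge p'_{k-1}(j)$, the right-hand side is nonnegative: whenever the indicator for $p'$ is $1$ (i.e.\ $p'_{k-1}(j) \ge a_k$), the indicator for $p$ is also $1$. I would therefore carry out a single induction that simultaneously maintains the two statements $p_{k-1}(j) \ge p'_{k-1}(j)$ and $d_k \ge d_{k-1}$. The base case $k = j$ is immediate, and in the inductive step the displayed identity above together with the induction hypothesis gives $d_k \ge d_{k-1} \ge d_j \ge 1$, which in particular yields $p_k(j) > p'_k(j)$ and feeds the hypothesis for the next step. Specializing to $k = n$ then gives $p_n^a(j) > p_n^{a'}(j)$, as required.

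There is no serious obstacle here; the only thing to be careful about is making sure the coupling is correctly set up so that the two processes share the same $a_k$ for $k \ne j$, so that the only source of difference between $p_k(j)$ and $p'_k(j)$ is the initial gap $a_j - a'_j$, which the argument above shows can only widen. The ordering of elements other than $j$ plays no role in the argument, since the evolution of $p_k(j)$ depends on $a_k$ and on $p_{k-1}(j)$ alone.
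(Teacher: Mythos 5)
Your proof is correct and is essentially the same induction as in the paper: both arguments track the position of element $j$ via the update $p_k(j)=p_{k-1}(j)+\mathbbm 1_{\{a_k\le p_{k-1}(j)\}}$, use that $a_k=a'_k$ for $k\neq j$, and observe that the indicator is monotone in the position, so the initial strict gap can never close. Phrasing it through the difference $d_k$ rather than chaining the inequality directly is only a cosmetic variation.
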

\begin{proof}
  Fix $n, j, a,a'$ as in the lemma. Trivially
  $p_j^a(j) > p_j^{a'}(j)$. Hence it suffices to observe by induction that for
  $k\ge j$,
\begin{equation*}
p_{k+1}^a(j) = p_k^a(j) + \mathbbm 1_{(a_{k+1} \le p_k^a(j))} =
p_k^a(j) + \mathbbm 1_{(a'_{k+1} \le p_k^a(j))} > p_k^{a'}(j) +
\mathbbm 1_{(a'_{k+1} \le p_k^{a'}(j))} = p_{k+1}^{a'}(j).\qedhere
\end{equation*}
\end{proof}
\begin{lemma}\label{lem:displacement_lower_bound_max}
  For all integer $n\ge 1, 1\le i\le n$ and $t\ge 1$, if $\pi\sim\mu_{n,q}$
  then
  \begin{equation*}
    \P(|\pi(i) - i| \ge t)\ge \max(\P(p_i(i)\ge 2t),\,
    \P(p_{n+1-i}(n+1-i)\ge 2t)).
  \end{equation*}
\end{lemma}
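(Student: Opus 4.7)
My plan is to prove the two lower bounds separately, coupling $\pi$ to the Mallows process $(p_n)$ in two different ways via Corollary~\ref{cor:perms-that-are-mallows} and invoking the strict monotonicity of Lemma~\ref{lem:monotonicity_Mallows_process}.

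For $\P(|\pi(i)-i|\ge t)\ge \P(p_i(i)\ge 2t)$, I would take the coupling $\pi(i)=n+1-p_n(i)$ of Corollary~\ref{cor:perms-that-are-mallows}(iv). Conditioning on the values of $p_j(j)$ for all $j\ne i$, which is legitimate since the $p_j(j)$ are independent, $p_n(i)$ becomes a deterministic function $F$ of the single variable $p_i(i)$, and Lemma~\ref{lem:monotonicity_Mallows_process} ensures that $F\colon\{1,\ldots,i\}\to\{1,\ldots,n\}$ is strictly increasing. The event $\{|\pi(i)-i|<t\}$ coincides with $\{p_n(i)\in(n+1-i-t,\,n+1-i+t)\}$; this open interval of length $2t$ contains at most $2t-1$ integers, and strict monotonicity of $F$ makes its values on $\{1,\ldots,i\}$ distinct integers, so the preimage is a set $B\subseteq\{1,\ldots,i\}$ with $|B|\le 2t-1$, i.e.\ $\{|\pi(i)-i|<t\}=\{p_i(i)\in B\}$.

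The structural point is that for $0<q<1$ the pmf~\eqref{eq:Mallows_folder_dist} of $p_i(i)$ is non-increasing on $\{1,\ldots,i\}$, so any subset of size at most $2t-1$ has probability at most that of the leftmost block $\{1,\ldots,2t-1\}$. Applied to $B$ (using that $p_i(i)$ is independent of the conditioned variables), this gives
\[
\P(|\pi(i)-i|<t\mid (p_j(j))_{j\ne i}) \le \P(p_i(i)\le 2t-1),
\]
and averaging over the conditioning yields the first inequality. For the second bound I would run the identical argument with the coupling $\pi(i)=p_n(n+1-i)$ of Corollary~\ref{cor:perms-that-are-mallows}(i); setting $j:=n+1-i$ and conditioning on $(p_k(k))_{k\ne j}$, $p_n(j)$ is strictly increasing in $p_j(j)$ by Lemma~\ref{lem:monotonicity_Mallows_process}, the forbidden open interval becomes $(i-t,\,i+t)$ (again containing at most $2t-1$ integers), and the same pmf comparison applies to $p_j(j)=p_{n+1-i}(n+1-i)$.

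The heart of the argument is the combination of strict monotonicity (so that the preimage of an open length-$2t$ interval under $F$ has size at most $2t-1$) with the non-increasing shape of the truncated geometric pmf (making $\{1,\ldots,2t-1\}$ the extremal set of its size). Identifying the right coupling so that the free variable becomes $p_i(i)$, respectively $p_{n+1-i}(n+1-i)$, is the only conceptually non-routine step; I do not expect further obstacles in the execution.
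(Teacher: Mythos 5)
Your proposal is correct and follows essentially the same route as the paper: the same two couplings from Corollary~\ref{cor:perms-that-are-mallows}, conditioning on $(p_j(j))_{j\neq i}$, the monotonicity of Lemma~\ref{lem:monotonicity_Mallows_process} to bound the number of bad values of $p_i(i)$ by $2t-1$, and the non-increasing truncated-geometric pmf to conclude $\P(|\pi(i)-i|\ge t\mid\cdot)\ge\P(p_i(i)\ge 2t)$ before averaging. The only cosmetic difference is that the paper notes the bad values are contiguous while you use injectivity plus the extremality of $\{1,\ldots,2t-1\}$, which amounts to the same estimate.
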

\begin{proof}
  Fix $n,i$ and $t$ as in the lemma. Couple $\pi$ with the Mallows
  process so that $\pi(j)=n+1-p_n(j)$ as in Corollary~\ref{cor:perms-that-are-mallows}. Condition on $(p_j(j))$ for
  $j\neq i$ and observe that under this conditioning, the value of
  $p_n(i)$, and hence the value of $\pi(i)$, is a function of $p_i(i)$. By
  Lemma~\ref{lem:monotonicity_Mallows_process}, under the
  conditioning, there are at most $2t-1$ (contiguous) values of $p_i(i)$ for
  which $|\pi(i) - i|<t$. Since the $(p_j(j))$ are independent and $\P(p_i(i)=s)$ is a decreasing
  function of $s$, it follows that
  \begin{equation*}
    \P(|\pi(i) - i|\ge t) = \E\left[ \P(|\pi(i) - i|\ge t\, |\,
    (p_j(j))_{j\neq i})\right] \ge \E\left[ \P(p_i(i)\ge 2t\,|\, (p_j(j))_{j\neq
    i})\right] = \P(p_i(i)\ge 2t).
  \end{equation*}
  The proof of the bound $\P(|\pi(i) - i|\ge t)\ge
  \P(p_{n+1-i}(n+1-i)\ge 2t)$ is analogous by using the coupling
  $\pi(j)=p_n(n+1-j)$ of Corollary~\ref{cor:perms-that-are-mallows}
  and applying Lemma~\ref{lem:monotonicity_Mallows_process} with
  $j=n+1-i$.
\end{proof}
\begin{corollary}\label{cor:displacement_lower_bound}
  For all integer $n\ge 3, 1\le i\le n$ and $1\le t\le \frac{n+5}{8}$, if $\pi\sim\mu_{n,q}$
  then
  \begin{equation*}
    \P(|\pi(i) - i|\ge t)\ge\frac{1}{2}q^{2t-1}.
  \end{equation*}
\end{corollary}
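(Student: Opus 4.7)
The plan is to apply Lemma~\ref{lem:displacement_lower_bound_max} with the choice $j:=\max(i,n+1-i)$ and then evaluate $\P(p_j(j)\ge 2t)$ explicitly. Since $i\in\{1,\ldots,n\}$, one always has $j\ge\lceil(n+1)/2\rceil\ge(n+1)/2$, which together with the hypothesis $t\le(n+5)/8$ yields $j\ge 4t-2$; in particular $j\ge 2t$, so $\{p_j(j)\ge 2t\}$ is a nonempty event.

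Using the truncated geometric distribution~\eqref{eq:Mallows_folder_dist}, a direct summation (in the case $q<1$; the boundary case $q=1$ is handled by continuity, giving $(j-2t+1)/j\ge 1/2$ from $j\ge 4t-2$) yields
\begin{equation*}
\P(p_j(j)\ge 2t)=\sum_{k=2t}^{j}\frac{(1-q)q^{k-1}}{1-q^j}=\frac{q^{2t-1}-q^j}{1-q^j}.
\end{equation*}
By Lemma~\ref{lem:displacement_lower_bound_max} the claim therefore reduces to verifying
\begin{equation*}
\frac{q^{2t-1}-q^j}{1-q^j}\ge\tfrac{1}{2}q^{2t-1},
\end{equation*}
and clearing the positive denominator shows this is equivalent to the elementary inequality $1+q^j\ge 2q^{j-2t+1}$.

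For the last step, set $r(q):=\tfrac{1}{2}(1+q^j)-q^{j-2t+1}$, so that the target inequality is $r(q)\ge 0$ on $(0,1)$. One computes $r(0)=1/2$, $r(1)=0$, and
\begin{equation*}
r'(q)=q^{j-2t}\left[\tfrac{j}{2}q^{2t-1}-(j-2t+1)\right].
\end{equation*}
Because $j\ge 4t-2$ implies $j-2t+1\ge j/2$, the bracketed factor is bounded above by $\tfrac{j}{2}(q^{2t-1}-1)$, which is strictly negative for $q\in(0,1)$. Hence $r$ is strictly decreasing on $[0,1]$, so $r(q)\ge r(1)=0$ throughout, giving the desired inequality.

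The main (and essentially only) obstacle is recognizing that the hypothesis $t\le(n+5)/8$ is exactly what is needed to make the monotonicity argument run: through the bound $j\ge(n+1)/2$, it translates into $j\ge 4t-2$, which in turn is the precise threshold separating $r'(q)<0$ on $(0,1)$ from possible sign changes. The rest of the argument is a routine calculation from the explicit form of the truncated geometric distribution combined with Lemma~\ref{lem:displacement_lower_bound_max}.
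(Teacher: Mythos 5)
Your proposal is correct and follows essentially the same route as the paper: choose $j=\max(i,n+1-i)$, apply Lemma~\ref{lem:displacement_lower_bound_max}, compute $\P(p_j(j)\ge 2t)$ from \eqref{eq:Mallows_folder_dist}, and use that $t\le\frac{n+5}{8}$ forces $j-2t+1\ge j/2$. The only cosmetic difference is the last elementary step: the paper factors $\frac{1-q^{j/2}}{1-q^j}=\frac{1}{1+q^{j/2}}\ge\frac12$ (equivalently, $1+q^j\ge 2q^{j/2}\ge 2q^{j-2t+1}$ by AM--GM), whereas you establish the same inequality by a monotonicity/derivative argument.
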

\begin{proof}
  Let $j = \max(i, n+1-i)$. Observe that $j\ge \frac{n+1}{2}$. Note
  also that our assumptions imply that $2t\le
  \frac{n+1}{2}\le j$. By
  Lemma~\ref{lem:displacement_lower_bound_max} and
  \eqref{eq:Mallows_folder_dist},
  \begin{equation*}
    \P(|\pi(i) - i|\ge t) \ge \P(p_j(j)\ge 2t) = \frac{1-q^{j-2t+1}}{1-q^j}
    q^{2t-1}.
  \end{equation*}
  Our assumptions imply that $t\le \frac{n+5}{8}\le \frac{j+2}{4}$
  and thus $j-2t+1\ge \frac{j}{2}$. Hence we conclude that
  \begin{equation*}
    \P(|\pi(i) - i|\ge t) \ge \frac{1-q^{j/2}}{1-q^j}q^{2t-1} =
    \frac{q^{2t-1}}{1+q^{j/2}} \ge \frac{1}{2} q^{2t-1}.\qedhere
  \end{equation*}
\end{proof}
Finally, we fix $n\ge 2, 1\le i\le n$ and prove a lower bound for
$\E|\pi(i) - i|$. We consider separately three cases. If $n\ge 3$
and $q<1-\frac{1}{n}$ then by
Corollary~\ref{cor:displacement_lower_bound},
\begin{equation*}
  \E|\pi(i) - i| \ge \sum_{t=1}^{\lfloor \frac{n+5}{8} \rfloor}
  \P(|\pi(i) - i|\ge t)\ge \frac{1}{2} \sum_{t=1}^{\lfloor \frac{n+5}{8}
  \rfloor} q^{2t-1} = \frac{q (1-q^{2\lfloor (n+5)/8\rfloor})}{2
  (1-q^2)} \ge c\frac{q}{1-q}
\end{equation*}
for some absolute constant $c>0$. If $n\ge 3$ and $q\ge
1-\frac{1}{n}$ then, similarly, by
Corollary~\ref{cor:displacement_lower_bound},
\begin{equation*}
  \E|\pi(i) - i| \ge \sum_{t=1}^{\lfloor \frac{n+5}{8} \rfloor}
  \P(|\pi(i) - i|\ge t)\ge \frac{1}{2} \sum_{t=1}^{\lfloor \frac{n+5}{8}
  \rfloor} q^{2t-1} \ge \frac{1}{2} \sum_{t=1}^{\lfloor \frac{n+5}{8}
  \rfloor} \left(1-\frac{1}{n}\right)^{2t-1}\ge c n
\end{equation*}
for some absolute constant $c>0$. Finally, if $n=2$ then by
Lemma~\ref{lem:displacement_lower_bound_max},
\begin{equation*}
  \E|\pi(i) - i| = \P(|\pi(i) - i|\ge 1) \ge \P(p_2(2)\ge 2) =
  \frac{q}{1+q}\ge \frac{q}{2}.
\end{equation*}
Thus in all cases we have shown that $\E|\pi(i) - i|\ge
c\min\left(\frac{q}{1-q}, n-1\right)$, as required.

\section{Increasing subsequences}\label{sec:lis}

Our goal in this section is to establish Theorem
\ref{thm:lis-largedev} and Proposition
\ref{prop:small_q_LIS_exp_bound}. We begin in Section
\ref{sec:lis_lb} with the lower bound in
\eqref{eq:LIS_greater_than_L} and the bound
\eqref{eq:LIS_less_than_L_UB}. In Section \ref{sec:upper_bound_LIS}
we use a union bound argument to show that the probability of a very
long increasing subsequence cannot be too large and establish the
upper bound in \eqref{eq:LIS_greater_than_L}. In the same section we
complete the proof of Theorem~\ref{thm:lis-largedev} and
Proposition~\ref{prop:small_q_LIS_exp_bound} by applying the
previous results to estimate the expectation of $\LIS(\pi)$. Lastly,
a result extending our tail bounds for $\LIS(\pi)$ is proved at the
end of Section~\ref{sec:upper_bound_LIS}. This result is used in the
arguments of Section~\ref{sec:wlln}.


\subsection{Lower bounds on the probability of a long increasing subsequence}\label{sec:lis_lb}
In this section we will show a lower bound on the probability that
there is a long increasing subsequence, proving the lower bound of
\eqref{eq:LIS_greater_than_L} and the bound
\eqref{eq:LIS_less_than_L_UB} in Theorem \ref{thm:lis-largedev}.
The proof proceeds by defining a sequence of stopping times for the
Mallows process at which elements are added to an increasing
subsequence. We show that the waiting time to build a long
increasing subsequence in this way is not too large with high
probability.
\subsubsection{Large deviation bounds for binomial random variables}
The next proposition collects some standard results on binomial
random variables which will be used in the sequel.
\begin{proposition} \label{prop:binom-largedev}
Suppose $n\ge 1$, $0<p<1$ and let $S\sim\bin(n,p)$.
\begin{enumerate}
  \item For all $t>0$,
\begin{equation*}
  \P(S - np < -t) < \exp\left(-\frac{t^2}{2np}\right).
\end{equation*}
In particular,
\begin{equation}\label{eq:bin_ld_half_exp}
  \P\left(S<\frac{1}{2}np\right) \le \exp\left(-\frac{1}{8}np\right).
\end{equation}
\item If $p<\frac{1}{2}$ then for all integer $np\le t\le n$,
\begin{align}\label{eq:bin_ld_lower_bound_for_upper_tail}
\P(S \ge t)  \ge  \left(\frac{np}{et}\right)^t.
\end{align}
\end{enumerate}
\end{proposition}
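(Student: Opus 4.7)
This proposition collects two standard Chernoff-type tail bounds for the binomial, and I would prove each by familiar methods, with no novelty required.

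For part (1), the plan is the usual exponential Markov inequality applied to the moment generating function. Writing $S=\sum_{i=1}^n X_i$ with i.i.d.\ Bernoulli$(p)$ summands, Markov's inequality gives $\P(np-S\ge t)\le e^{-\lambda t}\E[e^{\lambda(np-S)}]$ for any $\lambda>0$. The factorwise bound $\E[e^{-\lambda X_i}]=1-p+pe^{-\lambda}\le \exp(p(e^{-\lambda}-1))$ reduces the right side to $\exp(np(e^{-\lambda}-1+\lambda)-\lambda t)$, and the elementary inequality $e^{-\lambda}-1+\lambda\le \lambda^2/2$ for $\lambda\ge 0$ yields $\exp(np\lambda^2/2-\lambda t)$. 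Optimizing at $\lambda=t/(np)$ produces the stated bound $\exp(-t^2/(2np))$; specializing to $t=np/2$ gives the corollary $\exp(-np/8)$.

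For part (2), the natural plan is to retain only the $k=t$ summand in $\P(S\ge t)=\sum_{k\ge t}\binom{n}{k}p^k(1-p)^{n-k}$, obtaining $\P(S\ge t)\ge \binom{n}{t}p^t(1-p)^{n-t}$, and then to bound the two factors separately. For the binomial coefficient I would use the simple product-representation estimate
\[
\binom{n}{t}=\prod_{i=0}^{t-1}\frac{n-i}{t-i}\ge \left(\frac{n}{t}\right)^t,
\]
which follows from the termwise inequality $(n-i)/(t-i)\ge n/t$ for $0\le i<t$; combined with $p^t$ this contributes a factor $(np/t)^t$. To bound $(1-p)^{n-t}$ from below by $e^{-t}$ it suffices to check $(n-t)(-\log(1-p))\le t$, and using the elementary estimate $-\log(1-p)\le p/(1-p)$ this rearranges to $(n-t)p\le t(1-p)$, i.e.\ $np\le t$, which is exactly the hypothesis on $t$. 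Multiplying the two factor estimates gives $\P(S\ge t)\ge (np/(et))^t$.

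No step presents a real obstacle; both inequalities are textbook. The only small point worth checking carefully is the elementary inequality $-\log(1-p)\le p/(1-p)$, which I would verify by noting that it holds with equality at $p=0$ and that its derivative is non-negative on $[0,1)$. The hypothesis $p<1/2$ plays only a cosmetic role in part (2), delimiting the interesting range of $t$; the calculation itself works for any $p\in(0,1)$.
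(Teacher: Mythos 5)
Your proof is correct and follows essentially the same route as the paper: part (1) is the standard Chernoff/moment-generating-function bound (which the paper simply cites from Alon--Spencer), and part (2) uses the same single-term lower bound $\P(S\ge t)\ge\binom{n}{t}p^t(1-p)^{n-t}$ together with $\binom{n}{t}\ge (n/t)^t$. The only difference is the final elementary step, where you use $-\log(1-p)\le p/(1-p)$ in place of the paper's $\log(1-p)\ge -p-p^2$ (valid for $p\le \frac12$); your version indeed renders the hypothesis $p<\frac12$ superfluous, as you note.
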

\begin{proof}
  The first part is proved, for instance, in \cite[Theorem
A.1.13]{AloSpe08}. For the second part, observe first that
\begin{equation*}
 \P(S\ge t) \ge   \binom{n}{t} p^t (1-p)^{n-t} \ge \left(\frac{np}{t}\right)^{t}
 (1-p)^{n-t}.
\end{equation*}
Now note that $\log(1-p)\ge -p-p^2$ for $0\le p\le 1/2$. Thus, using
that $t\ge np$ in the third inequality,
\begin{equation*}
\P(S\ge t) \ge \left(\frac{np}{t}\right)^t e^{-(n-t)(p+p^2)}\ge
\left(\frac{np}{t}\right)^t e^{-np+p(t-np)}\ge
\left(\frac{np}{t}\right)^t e^{-t}.\qedhere
\end{equation*}
\end{proof}

\subsubsection{Lower bounds for $\P(\LIS(\pi)\ge L)$}

Fix $n\ge 1$ and $\frac 12 \le q \le 1-\frac 4n$. Let $(p_m)$ be the
$q$-Mallows process, and define, for $m\ge 1$, $\pi_m := (p_m)^R$ so
that $\pi_m\sim\mu_{m,q}$ by Corollary
\ref{cor:perms-that-are-mallows}.
Fix an integer $1\le L\le n$ and consider the following strategy for
finding an increasing subsequence in $\pi_n$. Let
\begin{equation*}
  W := \left[\frac{1}{1-q},\, \frac{1}{1-q} + \frac{n}{1000L} + 1\right]
  \cap \mathbb{Z}
\end{equation*}
and set $T_0:=\max(W)$. Consider the minimal time $S_1>T_0$ for
which $p_{S_1}(S_1)\in W$, and consider the first subsequent time
$T_1>S_1$ for which $p_{T_1}(S_1)\notin W$. Then repeat the process
and find the next subsequent time $S_2>T_1$ for which
$p_{S_2}(S_2)\in W$, and so on. Formally, with $T_0=\max(W)$, we
inductively define the stopping times for $i\ge 1$ as follows:
\begin{align*}
  S_i &:= \min \{t > T_{i-1} \ : \ p_t(t) \in W \},\\
  T_i &:= \min \{t > S_i \ : \ p_t(S_i)\notin W\}.
\end{align*}
We claim that for $k\ge 1$ and $m\ge S_k$, the sequence
$(\pi_m(S_1),\ldots, \pi_m(S_k))$ is increasing. This is equivalent
to the sequence $(p_m(S_1),\ldots, p_m(S_k))$ being decreasing. To
see this note that, by definition of the Mallows process, the
relative order of $p_m(S_i)$ and $p_m(S_{i+1})$ is the same as for
$p_{S_{i+1}}(S_i)$ and $p_{S_{i+1}}(S_{i+1})$. Now observe that the
definition of the stopping times above implies that
$p_{S_{i+1}}(S_i)>\max W\ge p_{S_{i+1}}(S_{i+1})$. We conclude that
if $m\ge S_k$ then $\LIS(\pi_m)\ge k$.
Thus we arrive at
\begin{equation}\label{eq:LIS_S_L_relation}
  \P(\LIS(\pi_n)\ge L) \ge \P(S_L\le n).
\end{equation}
In the rest of the section we focus on estimating the right-hand
side of the above inequality in two regimes of $n,L$ and $q$. We
start by describing a common part to both regimes. We always take
\begin{equation}\label{eq:cond_q_L}
  \frac{1}{2} \le q\le 1 - \frac{4}{n}\quad\text{and}\quad L\ge
  n(1-q)
\end{equation}
and observe that this implies that
\begin{equation}\label{eq:max_W_bound}
  \max(W) \le \frac{2}{1-q} \le \frac{n}{2}.
\end{equation}
Thus, by \eqref{eq:Mallows_folder_dist}, for any $i>\max(W)$ and any
$1\le j\le \max(W)+1$,
\begin{equation*}
  \P(p_i(i)=j) = \frac{(1-q)q^{j-1}}{1-q^i} \ge (1-q)q^{j-1} \ge
  \frac{(1-q)}{16}
  =: c_1(1-q).
\end{equation*}
The second inequality follows from the bound $q \ge 1/2$ once we note that for $x \le 1/2$, $(1-x)^{1/x} \ge 1/4$. In particular, if $i>\max(W)$ then
\begin{align}
  &\P(p_i(i)\in W) \ge c_1(1-q)|W|\ge
  \frac{c_1(1-q)n}{1000L} =: \frac{c_2(1-q)n}{L},\label{eq:good_assignment_bound}\\
  &\P(p_i(i)\le \min(W)) \ge c_1(1-q)\min(W)\ge c_1.\label{eq:drift_assignment_bound}
\end{align}
Next, we note the simple decomposition
\begin{equation*}\label{eq:S_L_decomp}
S_{L} = T_0 + \sum_{i=1}^L S_i - T_{i-1} + \sum_{i=1}^{L-1} T_{i} -
S_{i}.
\end{equation*}
Since $T_0\le \frac{n}{2}$ by the definition of $T_0$ and
\eqref{eq:max_W_bound}, we may plug this decomposition into
\eqref{eq:LIS_S_L_relation} to obtain
\begin{align}\label{eq:first_second_regime_bound}
  \P(\LIS(\pi_n)\ge L)\ge \P\left(\sum_{i=1}^L S_i - T_{i-1}
  \le\frac{n}{4},\ \ \sum_{i=1}^{L-1} T_{i} -S_{i}
  \le\frac{n}{4}\right).
\end{align}
We aim to bound the right-hand side by a product of two terms.

First, we note explicitly the following simple facts which follow
from the definition of the Mallows process and our definition of the
stopping times $(T_i)$ and $(S_i)$:
\begin{enumerate}
  \item For each $k\ge 0$, $|\{T_k < i \le S_{k+1}\,:\, p_i(i)\in
  W\}|= 1$.
  \item For each $k\ge 1$, $|\{S_k < i \le T_{k}\,:\, p_i(i)\le \min W\}|\le |W|$.
\end{enumerate}

Second, we let $(U_j)$ and $(V_j)$, $j\ge 1$, be two independent
sequences of independent Bernoulli random variables satisfying
\begin{equation*}
  \P(U_j = 1) = \frac{c_2(1-q)n}{L}\quad\text{and}\quad\P(V_j = 1) =
  c_1.
\end{equation*}
Third, we couple $((U_j),(V_j))$ with the Mallows process $(p_m)$ as
follows.
If $T_k< i\le S_{k+1}$ for some $k\ge 0$ then we consider the next
``unused'' $U_j$, i.e.,
\begin{equation*}
  j = |\{i'\,:\, i'<i,\ T_k< i'\le S_{k+1}\text{ for some $k\ge 0$}\}| +
  1,
\end{equation*}
and couple $U_j$ to $p_i(i)$ in a way that if $U_j=1$ then
$p_i(i)\in W$. Such a coupling is possible due to the bound
\eqref{eq:good_assignment_bound} and the fact that the event $T_k<
i\le S_{k+1}$ is determined solely by $(p_j(j))$ for $j<i$.
Similarly, if $S_k< i \le T_k$ for some $k\ge 1$ then we consider
the next ``unused'' $V_j$, i.e.,
\begin{equation*}
  j = |\{i'\,:\, i'<i,\ S_k< i'\le T_{k}\text{ for some $k\ge 1$}\}| +
  1,
\end{equation*}
and couple $V_j$ and $p_i(i)$ in a way that if $V_j=1$ then
$p_i(i)\le \min(W)$. Again, this is possible due to the bound
\eqref{eq:drift_assignment_bound} and the fact that the event $S_k<
i \le T_k$ is determined solely by $(p_j(j))$ for $j<i$.

The coupling, together with the two enumerated facts above, yields
the following containment of events,
\begin{align*}
  \left\{\sum_{1\le j\le n/4} U_j \ge L\right\} &\subseteq \Big\{\sum_{i=1}^L S_i - T_{i-1}
  \le\frac{n}{4}\Big\},\\
  \left\{\sum_{1\le j\le n/4} V_j \ge (L-1)|W|\right\} &\subseteq \Big\{\sum_{i=1}^{L-1} T_{i} -S_{i}
  \le\frac{n}{4}\Big\}.
\end{align*}
Finally, defining
\begin{align*}
  B:= \sum_{1\le j\le n/4} U_j&\sim
  \bin\left(\left\lfloor\frac{n}{4}\right\rfloor, \frac{c_2(1-q)n}{L}\right),\\
  B':= \sum_{1\le j\le n/4} V_j &\sim
  \bin\left(\left\lfloor\frac{n}{4}\right\rfloor, c_1\right),
\end{align*}
we may continue \eqref{eq:first_second_regime_bound} and write
\begin{equation}\label{eq:LIS_B_B'_rel}
  \P(\LIS(\pi_n)\ge L)\ge \P(B\ge L)\, \P(B'\ge(L-1)|W|).
\end{equation}
We observe for later use that the restriction on $q$ in
\eqref{eq:cond_q_L} implies that $n\ge 8$ and hence $\lfloor
\frac{n}{4}\rfloor\ge \frac{n}{8}$. The analysis now splits
according to two regimes of the parameters.\\

{\bf First regime of the parameters:} Suppose in addition to
\eqref{eq:cond_q_L} that
\begin{equation}\label{eq:first_regime_assumption}
  L\le cn\sqrt{1-q}
\end{equation}
for some small absolute constant $c>0$. This implies that $\E(B) \ge
\frac{c_2(1-q)n^2}{8L}\ge 2L$, and it follows by
\eqref{eq:bin_ld_half_exp} that
\begin{equation}\label{eq:S_i_T_i_estimate}
  \P(B < L) \le e^{-\frac{c(1-q)n^2}{L}}.
\end{equation}
Moreover, recalling that $c_1=\frac{1}{16}$ and $(L-1)|W|\le
(L-1)(2+n/(1000L))\le 2L + n/1000 \le n/500$ if the constant in
\eqref{eq:first_regime_assumption} is sufficiently small, we have
$\E(B') \ge \frac{c_1 n}{8} \ge 2(L-1)|W|$. Using
\eqref{eq:bin_ld_half_exp} again, we have the bound
\begin{equation}\label{eq:T_i_S_i_estimate}
  \P(B' < (L-1)|W|) \le e^{-cn}.
\end{equation}
Putting together \eqref{eq:LIS_B_B'_rel},
\eqref{eq:S_i_T_i_estimate} and \eqref{eq:T_i_S_i_estimate} we
obtain
\begin{equation*}
  \P(\LIS(\pi_n)< L) \le e^{-\frac{c(1-q)n^2}{L}} + e^{-cn} \le e^{-\frac{c(1-q)n^2}{L}}
\end{equation*}
under the assumptions \eqref{eq:cond_q_L} and
\eqref{eq:first_regime_assumption}. This establishes
\eqref{eq:LIS_less_than_L_UB}. \\

{\bf Second regime of the parameters:} Now suppose, in addition to
\eqref{eq:cond_q_L} and instead of
\eqref{eq:first_regime_assumption}, that
\begin{equation}\label{eq:second_regime_assumption}
  L\ge Cn\sqrt{1-q}
\end{equation}
for some large absolute constant $C>0$. This implies, in particular,
that $L \ge \E(B)$.
It follows by \eqref{eq:bin_ld_lower_bound_for_upper_tail} that
\begin{equation}\label{eq:second_regime_B_estimate}
  \P(B\ge L) \ge \left(\frac{c(1-q)n^2}{L^2}\right)^L.
\end{equation}
Let us now make an additional assumption, which will imply that
$\E(B') \ge 2(L-1)|W|$. Since $(L-1)|W|\le n/1000 + 2L$, it suffices
to assume (recalling that $c_1=\frac{1}{16}$, $\lfloor
\frac{n}{4}\rfloor \ge \frac{n}{8}$ and hence $\E(B')\ge
\frac{n}{128}$) that
\begin{equation}\label{eq:second_regime_extra_assumption}
  L\le \frac{1}{2}\left(\frac{c_1}{16} - \frac{1}{1000}\right)n.
\end{equation}
Under this assumption, by \eqref{eq:bin_ld_half_exp},
\begin{equation}\label{eq:second_regime_B'_estimate}
  \P(B'\ge (L-1)|W|) \ge \P\left(B'\ge \frac{\E(B')}{2}\right) \ge 1
  - \exp\left(\frac{1}{8} \E(B')\right)\ge \frac{1}{2},
\end{equation}
where we have used the fact $\E(B')\ge 8$ which follows from our
assumptions \eqref{eq:cond_q_L}, \eqref{eq:second_regime_assumption}
and \eqref{eq:second_regime_extra_assumption}.
Putting together \eqref{eq:LIS_B_B'_rel},
\eqref{eq:second_regime_B_estimate} and
\eqref{eq:second_regime_B'_estimate} we have proven that
\begin{equation}\label{eq:second_regime_final_bound}
  \P(\LIS(\pi_n)\ge L)\ge
  \frac{1}{2}\left(\frac{c(1-q)n^2}{L^2}\right)^L \ge \left(\frac{c(1-q)n^2}{L^2}\right)^L
\end{equation}
under the assumptions \eqref{eq:cond_q_L},
\eqref{eq:second_regime_assumption} and
\eqref{eq:second_regime_extra_assumption}. To remove the extra
assumption \eqref{eq:second_regime_extra_assumption}, we note that
for any $k$ we have the trivial bound
\begin{equation*}
  \P(\LIS(\pi_k)=k) = Z_{k,q}^{-1} = (1-q)^k\prod_{i=1}^{k}
  (1-q^{i})^{-1} \ge (1-q)^k
\end{equation*}
by \eqref{eq:mu_n_q_def} and \eqref{eq:Z_formula}. Thus, using
Fact \ref{fact:indep-of-orderings-on-blocks}, for any $1\le L\le n$ we have
\begin{equation}\label{eq:second_regime_final_bound2}
\P(\LIS(\pi_n)\ge L) \ge \P(\LIS(\pi_L) = L) \ge (1-q)^L,
\end{equation}
establishing the bound \eqref{eq:second_regime_final_bound} (with a
different constant $c$) when the assumption assumption
\eqref{eq:second_regime_extra_assumption} is violated. Putting
together \eqref{eq:second_regime_final_bound} and
\eqref{eq:second_regime_final_bound2} establishes the lower bound in
\eqref{eq:LIS_greater_than_L}.

\subsection{Upper bound on the probability of a long increasing subsequence}\label{sec:upper_bound_LIS}
In this section we establish the remaining results of
Theorem~\ref{thm:lis-largedev}. In
Section~\ref{sec:upper_bound_prob_long_LIS} we estimate the
probability that the longest increasing subsequence of a random
Mallows permutation is exceptionally long and establish the upper
bound in \eqref{eq:LIS_greater_than_L}. The expected length of the
longest increasing subsequence is then estimated in
Section~\ref{sec:bounds_E_LIS}. Lastly, a result extending our tail
bounds for $\LIS(\pi)$ is proved at the end of
Section~\ref{sec:generalized_bound_on_LIS}. This result is used in
the arguments of Section~\ref{sec:wlln}.

\subsubsection{Very long increasing
subsequences are unlikely}\label{sec:upper_bound_prob_long_LIS}

In this section we establish the upper bound in
\eqref{eq:LIS_greater_than_L} of Theorem \ref{thm:lis-largedev}. In
fact, we prove the following slightly stronger result.
\begin{proposition}\label{prop:lis-ub}
Let $n\ge 1$, $0 < q \le 1- \frac{2}{n}$ and $\pi\sim\mu_{n,q}$, then,
\[
\p(\LIS(\pi)\ge L) \le \left(\frac{C(1-q)n^2}{L^2}\right)^L
\]
for all integer $L \ge Cn\sqrt{1-q}$.
\end{proposition}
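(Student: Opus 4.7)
I will prove the result by a union bound over the $\binom{n}{L}$ choices of positions, combined with a careful per-tuple estimate derived from the Mallows process. Specifically, start from
\begin{equation*}
\p(\LIS(\pi)\ge L)\le\sum_{1\le i_1<\cdots<i_L\le n}\p\bigl(\pi(i_1)<\cdots<\pi(i_L)\bigr),
\end{equation*}
and estimate each summand using the dynamics of the $q$-Mallows process $(p_m)$. Since $\binom{n}{L}\le(en/L)^L$, the goal is a per-tuple bound, on average, of the form $\p(\pi_I\text{ inc})\le(C(1-q)n/L)^L$.

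\textbf{Rewriting the event.} Using the coupling $\pi(i)=n+1-p_n(i)$ from Corollary \ref{cor:perms-that-are-mallows}, the event $\pi(i_1)<\cdots<\pi(i_L)$ is equivalent to $p_n(i_1)>\cdots>p_n(i_L)$. Because insertions after time $i_L$ preserve the relative order of older folders, this in turn equals $p_{i_L}(i_1)>\cdots>p_{i_L}(i_L)$. Unwinding the insertion dynamics, the chain holds iff at each time $i_j$ (for $2\le j\le L$) the newly inserted folder $i_j$ is placed at or to the left of the current position of folder $i_{j-1}$, equivalently $p_{i_j}(i_j)\le p_{i_j-1}(i_{j-1})$.

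\textbf{Per-tuple bound by sequential conditioning.} The insertion positions $\{p_m(m)\}$ are independent truncated geometric random variables (Fact \ref{fact:indep-of-orderings-on-blocks}), so conditioning in order yields
\begin{equation*}
\p(\pi_I\text{ inc})=\E\!\left[\prod_{j=2}^L\frac{1-q^{p_{i_j-1}(i_{j-1})\wedge i_j}}{1-q^{i_j}}\,\mathbbm 1_{\{\text{chain valid through step }j-1\}}\right].
\end{equation*}
Bound $1-q^m\le\min(m(1-q),1)$ in the numerator, and use $1-q^{i_j}\ge c$ for $i_j\gtrsim 1/(1-q)$ in the denominator (a condition that holds for all but the first few indices, since $q\le 1-2/n$ forces $n(1-q)\ge 2$). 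Each factor is then at most $C\min\bigl((1-q)p_{i_j-1}(i_{j-1}),1\bigr)$. The deterministic inequality $p_{i_j-1}(i_{j-1})\le p_{i_{j-1}}(i_{j-1})+(i_j-i_{j-1})$, together with the fact that the chain conditioning truncates the initial value $p_{i_{j-1}}(i_{j-1})$ at $p_{i_{j-1}-1}(i_{j-2})$, keeps all tracked positions of order $O(1/(1-q))$ in expectation and yields a per-tuple bound of the form $(C(1-q))^{L-1}\prod_j(d_j+1/(1-q))$ with $d_j:=i_{j+1}-i_j$.

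\textbf{Summation and main obstacle.} Summing over all $L$-tuples amounts to summing the gap product over all compositions of at most $n$ into $L-1$ positive parts. An AM-GM estimate shows the dominant contribution comes from approximately uniform gaps $d_j\approx n/L$; in the regime $L\ge Cn\sqrt{1-q}$ one has $n/L\le 1/\sqrt{1-q}\ll 1/(1-q)$, so $d_j+1/(1-q)\asymp 1/(1-q)$, and combining with $\binom{n}{L}\le(en/L)^L$ collapses to the desired bound $(C(1-q)n^2/L^2)^L$. The main technical obstacle is making the claim that positions stay of order $1/(1-q)$ under the chain conditioning rigorous: a naive bound $p_{i_j-1}(i_{j-1})\le i_j$ is far too weak, and the correct statement---that successive conditioning biases the positions to remain near the left boundary---requires iterating a one-step stochastic-dominance argument similar in spirit to those used in Section \ref{sec:displacement}.
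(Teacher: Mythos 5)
Your setup is sound and matches the paper's starting point: the union bound over index tuples, the coupling $\pi(i)=n+1-p_n(i)$, and the reformulation of the event as the chain condition $p_{i_j}(i_j)\le p_{i_j-1}(i_{j-1})$ are all correct. The gap is in the quantitative core. If, as you propose, the tracked positions are only controlled at scale $1/(1-q)$, then each conditional factor is bounded by $C(1-q)\bigl(d_j+\tfrac{1}{1-q}\bigr)=C(1-q)d_j+C$, which is never below a constant; so your per-tuple bound $(C(1-q))^{L-1}\prod_j\bigl(d_j+\tfrac{1}{1-q}\bigr)$ is at least $c^{L}$, whereas your own stated goal $(C(1-q)n/L)^L$ is at most $(\sqrt{1-q}/c)^L$ in the regime $L\ge Cn\sqrt{1-q}$. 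Summing your bound over tuples (at most $n$ tuples per gap vector, at most $\binom{n}{L-1}$ gap vectors, product maximized at equal gaps) gives roughly $\bigl(\tfrac{C(1-q)n^2}{L^2}+\tfrac{Cn}{L}\bigr)^{L}$; since $L\ge Cn\sqrt{1-q}\ge Cn(1-q)$ the term $\tfrac{Cn}{L}$ dominates, leaving $(Cn/L)^L$, which exceeds the target $(C(1-q)n^2/L^2)^L$ by a factor of at least $(c/\sqrt{1-q})^{L}$. The final step of your plan, asserting that $d_j+\tfrac{1}{1-q}\asymp\tfrac{1}{1-q}$ together with $\binom{n}{L}\le(en/L)^L$ collapses to the desired bound, is an arithmetic slip, not a proof; and the step you yourself flag as the main obstacle is precisely where the actual work lies.

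The missing mechanism is not that positions stay of order $1/(1-q)$ (that alone only yields a constant per link), but that the chain conditioning forces the insertion values $j_k=p_{i_k}(i_k)$ to satisfy $j_{k+1}-j_k\le i_{k+1}-i_k$, so along the chain they form an almost-decreasing sequence; heuristically the conditional probability of the $j$th link then decays like $\tfrac1j+(1-q)d_j$ rather than staying constant, and the product of these is of the right order $((1-q)n/L)^L$ for typical tuples. The paper captures this combinatorially rather than by sequential conditioning: the necessary condition \eqref{eq:j_k_increasing_nec_cond} limits the number of admissible insertion-position sequences $J$ to $\binom{2n}{m}$ via the substitution $\ell_k=j_k-i_k-k$ (see \eqref{eq:J_family_estimate}), each such $J$ has probability at most $(C(1-q))^m$ by Lemma~\ref{lem:prob-bound-i-j} (with the indices $i_k<1/(1-q)$ handled through translation invariance, Lemma~\ref{lem:shift}, and independence of induced orderings, Lemma~\ref{lem:indep-seqs}), and this yields the uniform per-tuple bound $\P(E_I)\le(Cn(1-q)/m)^m$ of Proposition~\ref{prop:prob-bound-i}, after which the union bound closes. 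Without an ingredient of this kind, tying the growth of the insertion positions to the gaps of $I$ globally, your route cannot reach the stated inequality.
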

The idea of the proof is to bound the probability that a fixed
subsequence is increasing and then apply a union bound over all
possible long increasing subsequences. For the remainder of this
section, assume $\pi \sim \mu_{n,q}$ for some fixed $n$ and $q$
satisfying the conditions of the proposition. Using
Corollary~\ref{cor:perms-that-are-mallows}, we couple $\pi$ with the
$q$-Mallows process $(p_m)$ so that
\begin{equation}\label{eq:pi_p_coupling_LIS_UB}
  \pi(i) = n+1 - p_n(i)\quad\text{ for all $1\le i\le n$}.
\end{equation}
For an increasing sequence of integers $I = (i_1,\ldots, i_m)$ and a
sequence of integers $J = (j_1, \ldots, j_m)$ satisfying that $1\le
j_k\le i_k$, define the event
\begin{equation}\label{eq:E_I_J_event}
E_{I,J}:=\{p_{i_k}(i_k)=j_k\text{ for all $1\le k\le m$}\}.
\end{equation}
Additionally, for an increasing sequence of integers $I =
(i_1,\ldots, i_m)\subseteq[n]$, define the event that $I$ is a set
of indices of an increasing subsequence,
\begin{equation}\label{eq:E_I_event}
E_{I} := \{\pi(i_{k+1})>\pi(i_k)\text{ for all $1\le k\le m-1$}\}.
\end{equation}
In the next lemma and proposition we estimate the probabilities of these
events.

\begin{lemma}\label{lem:prob-bound-i-j}
Let $m\ge 1$. Let $I=(i_1,\ldots,i_m)$ be an increasing sequence of
integers satisfying $i_1 \ge 1/(1-q)$, and let $J=(j_1,\ldots,j_m)$
be a sequence of integers satisfying $1\le j_k \le i_k$. Then
\[
\p(E_{I,J}) \le \left(C(1-q)\right)^m.
\]
\end{lemma}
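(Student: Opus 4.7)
The plan is to exploit the fact that, by the very construction of the Mallows process, the random variables $(p_i(i))_{i\ge 1}$ are mutually independent with the explicit truncated-geometric distribution given in \eqref{eq:Mallows_folder_dist}. This immediately factorizes the probability of $E_{I,J}$ as
\[
\P(E_{I,J}) = \prod_{k=1}^{m} \P(p_{i_k}(i_k) = j_k) = \prod_{k=1}^{m} \frac{(1-q)q^{j_k-1}}{1-q^{i_k}}.
\]
Since each exponent $j_k - 1 \ge 0$, the numerator factor $q^{j_k-1}$ is at most $1$ and can be dropped. All that remains is to bound $\frac{1}{1-q^{i_k}}$ by an absolute constant.

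For this, I would use the hypothesis $i_1 \ge 1/(1-q)$, which by monotonicity gives $i_k \ge 1/(1-q)$ for every $k$. The elementary inequality $-\log q \ge 1-q$ (valid for $0 < q < 1$, e.g.\ from the Taylor expansion $-\log(1-x)=x+x^2/2+\cdots$) then yields
\[
q^{i_k} \;\le\; q^{1/(1-q)} \;=\; \exp\!\left(\frac{\log q}{1-q}\right) \;\le\; e^{-1},
\]
so $1 - q^{i_k} \ge 1 - e^{-1}$, and hence $\frac{1}{1-q^{i_k}} \le \frac{e}{e-1}$ uniformly in $k$. Substituting this into the product gives
\[
\P(E_{I,J}) \;\le\; \left(\tfrac{e}{e-1}\right)^{m}\!(1-q)^{m} \;=\; (C(1-q))^{m},
\]
with $C := e/(e-1)$, which is the desired bound.

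There is no real obstacle here: the two ingredients are independence of the insertion positions and a one-line calculus estimate to turn the cutoff $i_1 \ge 1/(1-q)$ into a uniform lower bound on $1-q^{i_k}$. The only thing worth emphasizing in the writeup is why independence applies, namely Fact \ref{fact:indep-of-orderings-on-blocks} together with the defining recursion \eqref{eq:Mallows_iteration_procedure}, so that the event $E_{I,J}$ is a product event in the independent family $(p_i(i))$. The lemma then feeds into the union bound over choices of $(I,J)$ in the proof of Proposition \ref{prop:lis-ub}.
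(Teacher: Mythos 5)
Your proposal is correct and matches the paper's argument: the paper also factorizes $\P(E_{I,J})$ via independence of the $(p_i(i))$ using \eqref{eq:Mallows_folder_dist} and bounds each factor $\frac{(1-q)q^{j_k-1}}{1-q^{i_k}}$ by $C(1-q)$, with the hypothesis $i_1\ge 1/(1-q)$ ensuring $1-q^{i_k}\ge 1-e^{-1}$ exactly as you spell out. Your writeup just makes the constant and the calculus step explicit.
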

\begin{proof} By \eqref{eq:Mallows_folder_dist},
\begin{equation*}
\p(p_{i_k}(i_k)=j_k\text{ for all $1\le k\le m$}) =
\displaystyle\prod_{1 \le k \le
  m}\frac{(1-q)q^{j_{k}-1}}{1-q^{i_k}} \le \left(C(1-q)\right)^m. \qedhere
\end{equation*} 
\end{proof}

%

\begin{proposition}\label{prop:prob-bound-i}
Let $1\le m\le n$ and let $I=(i_1, \ldots, i_m)\subseteq[n]$ be an
increasing sequence of integers. Then
\begin{equation*}
\p(E_I) \le \left(\frac{Cn(1-q)}{m} \right)^m.
\end{equation*}
\end{proposition}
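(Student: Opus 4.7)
The plan is to bound $\P(E_I)$ by a union bound driven by the Mallows process. I would use Corollary~\ref{cor:perms-that-are-mallows} to couple $\pi$ with the $q$-Mallows process $(p_k)$ via $\pi(i) = n+1-p_n(i)$, so that $E_I$ is equivalent to the event $\{p_n(i_1) > p_n(i_2) > \cdots > p_n(i_m)\}$. By Fact~\ref{fact:indep-of-orderings-on-blocks} this event depends only on the insertion-position vector $V := (p_j(j))_{j=i_1}^{i_m}$, so
\[
\P(E_I) \;=\; \sum_{V \in S_I} \P\bigl(p_j(j) = v_j \text{ for all } j\in[i_1,i_m]\bigr),
\]
where $S_I$ is the set of $V$'s with $v_j \in [1,j]$ that produce a decreasing $(p_n)_I$.

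Two ingredients feed this sum. First, Lemma~\ref{lem:prob-bound-i-j} applied to the full consecutive range $(i_1, i_1+1, \ldots, i_m)$ bounds each summand by $(C(1-q))^{i_m - i_1 + 1}$, under the mild hypothesis $i_1 \ge 1/(1-q)$ (which can be enforced by a preliminary reduction). Second, to count $|S_I|$ I would exploit the $q=1$ symmetry: when each insertion is uniform on its support the resulting $p_n$ is uniform on $S_n$, so the induced ordering $(p_n)_I$ is uniform on $S_m$. Hence every $\sigma \in S_m$ has exactly $\prod_{j=i_1}^{i_m} j / m!$ preimages under $V \mapsto (p_n)_I$, and in particular $|S_I| = \prod_{j=i_1}^{i_m} j / m!$.

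Combining the two ingredients and using $\prod_{j=i_1}^{i_m} j \le n^{i_m-i_1+1}$ together with Stirling's bound $m! \ge (m/e)^m$ yields
\[
\P(E_I) \;\le\; \frac{(Cn(1-q))^{i_m-i_1+1}}{m!}.
\]
When $I$ is consecutive the exponent equals $m$ and this already gives $(eCn(1-q)/m)^m$, which is the claimed bound.

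The main obstacle is the non-consecutive case, where $i_m - i_1 + 1 > m$ so the naive combination carries an extra factor $(Cn(1-q))^{i_m - i_1 + 1 - m}$ that can blow up in the regime $n(1-q)\to\infty$. I expect this is handled either by (a) reducing via Corollary~\ref{cor:induced_permutation_Mallows} and Lemma~\ref{lem:shift} to the induced Mallows permutation on the consecutive block $[i_1,i_m]$ of length $n' := i_m - i_1 + 1 \le n$, to which the consecutive case applies; or (b) sharpening the count of $|S_I|$ by using the additional constraints placed on the gap insertions $j \in [i_1,i_m]\setminus I$ to show $|S_I|$ is substantially smaller than the naive $\prod_j j / m!$ whenever $I$ has gaps. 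The preliminary reduction for $i_1 < 1/(1-q)$ (separating the first few insertion factors and bounding them crudely by one) and this refined counting are the only points beyond the strategy above, and absorbing their cost into the constant $C$ completes the argument.
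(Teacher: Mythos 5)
There is a genuine gap, and it sits exactly where you flag it: the non-consecutive case, which is the heart of the proposition, and neither of your proposed repairs closes it. Your union bound runs over the \emph{full} insertion vector $V=(p_j(j))_{j=i_1}^{i_m}$, so each summand is bounded by $(C(1-q))^{n'}$ with $n':=i_m-i_1+1$, while your (correct, and in fact exact) count gives $|S_I|=\prod_{j=i_1}^{i_m} j/m!$; combining them yields $\P(E_I)\le (Cn(1-q))^{n'}/m!$, whose exponent is the \emph{span} $n'$ rather than the cardinality $m$. In the relevant regime $n(1-q)\to\infty$ this is far weaker than the claimed $(Cn(1-q)/m)^m$ and is useless for the subsequent union bound over $I$ (take $I=(1,n)$, $m=2$: you get $(Cn(1-q))^{n}/2$ against a target of $(Cn(1-q)/2)^2$). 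Repair (a) fails because applying Corollary~\ref{cor:induced_permutation_Mallows} to the block $[i_1,i_m]$ only replaces $n$ by $n'$; the translated index set inside the block still has the same gaps, so you are not reduced to the consecutive case, and the exponent remains $n'$. Repair (b) cannot work either: by your own uniformity argument the count $|S_I|=\prod_j j/m!$ is exact, so there is nothing to sharpen. The real loss is in the probability estimate, namely discarding the factors $q^{v_j-1}$ for the roughly $n'-m$ ``gap'' insertions; keeping those weights turns the computation into a genuinely different (and harder) weighted sum, of the kind the paper only needs later for the refined estimate in Proposition~\ref{prop:prob-bound-i-2}.

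The paper's proof avoids this by never conditioning on the gap insertions at all: it decomposes $E_I$ only over the $m$-tuples $J=(j_1,\dots,j_m)$ of insertion positions at the indices of $I$ (the events $E_{I,J}$ of \eqref{eq:E_I_J_event}), so each summand is $(C(1-q))^m$ by Lemma~\ref{lem:prob-bound-i-j}, and the gap insertions are implicitly marginalized. The count of admissible $J$'s is then controlled not by a $q=1$ symmetry but by the deterministic constraint $j_{k+1}-j_k\le i_{k+1}-i_k$ forced by \eqref{eq:Mallows_iteration_procedure} when $E_I\cap E_{I,J}\neq\emptyset$, which after the substitution $\ell_k=j_k-i_k-k$ gives $|\mathcal{J}|\le\binom{2n}{m}$ and hence the exponent $m$. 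Finally, the hypothesis $i_1\ge 1/(1-q)$ is removed not by bounding early factors crudely by one (that again costs you a count over their values), but by translation invariance (Lemma~\ref{lem:shift}) when $i_m<1/(1-q)$ and by splitting $I$ at $1/(1-q)$ and using independence of induced orderings (Lemma~\ref{lem:indep-seqs}) in the mixed case. Your argument does establish the consecutive case, but as it stands it does not prove the proposition.
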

\begin{proof}
Fix a sequence $I$ as in the proposition. Let $\mathcal{J}$ be the
set of all integer sequences $J=(j_1,\ldots,j_m)$ satisfying $1\le
j_k\le i_k$ for $1\le k\le m$ and satisfying that the event $E_I\cap
E_{I,J}$ is non-empty. Observe that by
\eqref{eq:Mallows_iteration_procedure}, the Mallows process
satisfies for every $1\le k\le m-1$ that
\begin{align*}
&p_{i_{k+1}}(i_k)\le p_{i_k}(i_k) + i_{k+1} - i_k\quad\text{ and}\\
&p_n(i_{k+1})< p_n(i_k)\text{ if and only if }
p_{i_{k+1}}(i_{k+1})<p_{i_{k+1}}(i_k).
\end{align*}
Thus the coupling
\eqref{eq:pi_p_coupling_LIS_UB} implies that in order that
$J\in\mathcal{J}$ it is necessary that
\begin{align}\label{eq:j_k_increasing_nec_cond}
j_{k+1}-j_k \le i_{k+1}-i_k\quad\text{ for all $1
\le k \le m-1$}.  
\end{align}
We conclude that if $J\in\mathcal{J}$, then the transformed sequence
$(\ell_1,\ldots, \ell_m)$ defined by $\ell_k:=j_k-i_k-k$ satisfies
\begin{align*}
&1-2n \le \ell_k \le -1\quad\text{ for all $1 \le k \le m$,\, and}  \\
&\ell_{k+1}<\ell_k\quad\text{ for all $1 \le k \le m-1$}.  
\end{align*}
Since the above transformation is one-to-one, it follows that
\begin{equation}\label{eq:J_family_estimate}
  |\mathcal{J}|\le {2n \choose m}.
\end{equation}

We proceed to establish the proposition by considering separately
several cases. Suppose first that $i_1 \ge 1/(1-q)$. Combining
Lemma~\ref{lem:prob-bound-i-j} and the bound
\eqref{eq:J_family_estimate}, we obtain that
\begin{align*}
\p(E_I) = \displaystyle\sum_{J\in\mathcal{J}}\p(E_I
  \cap E_{I,J}) \le \displaystyle\sum_{J\in\mathcal{J}}\p(E_{I,J}) \le |\mathcal{J}| \left(C(1-q)\right)^m \le
\left(\frac{Cn(1-q)}{m} \right)^m. 
\end{align*}
This establishes the proposition for the case that $i_1\ge 1/(1-q)$.

Now suppose that $i_m<1/(1-q)$. Observe that by the assumptions on
$q$ in Proposition~\ref{prop:lis-ub}, we have $1/(1-q)\le n/2$.
Thus, the translated sequence $I + \lceil n/2\rceil$ is contained in
$[1/1-q, n]$. Applying the translation invariance
Lemma~\ref{lem:shift}, the case that $i_m<1/(1-q)$ reduces to the
case that $i_1\ge 1/(1-q)$ and we conclude that the proposition
holds for such $I$ as well.

Finally, suppose that $i_1<1/(1-q)$ and $i_m\ge 1/(1-q)$. Let $1 \le
k \le m-1$ be such that $I_1:=(i_1,\ldots,i_k) \subseteq
[0,1/(1-q))$ and $I_2 := (i_{k+1},\ldots,i_m) \subseteq
[1/(1-q),n]$. By the independence of induced orderings Lemma
\ref{lem:indep-seqs}, we may apply the proposition to each of $I_1$
and $I_2$ to obtain
\begin{align}\label{eq:comparison_for_x_to_x}
\p(E_I) & \le \p(E_{I_1}\cap E_{I_2}) = \p(E_{I_1})\cdot
\p(E_{I_2})\le \frac{\left(Cn(1-q)\right)^m}{k^k(m-k)^{m-k}}\le
\left(\frac{Cn(1-q)}{m}\right)^m.
\end{align}
The last inequality follows once we recall that $(ca)^a\le a!\le
(Ca)^a$ for $a\ge 1$, and note that ${m \choose k} \le 2^m$. This
finishes the proof of the proposition.
\end{proof}

\begin{proof}[Proof of Proposition~\ref{prop:lis-ub}]
For $1\le m\le n$, denote by $\mathcal{I}_m$ the set of all
increasing integer sequences $I=(i_1, \ldots, i_m)\subseteq[n]$.
Observe that $|\mathcal{I}_m|\le {n \choose m}$. Applying a union
bound and Proposition \ref{prop:prob-bound-i} we obtain for all
integer $L\ge Cn\sqrt{1-q}$ that
\begin{align*}
\p(\LIS(\pi)\ge L) & \le \displaystyle\sum_{L\le m \le n,\,
I\in\mathcal{I}_m} \p(E_I) \le \displaystyle\sum_{m \ge  L} {n
\choose m}  \left(\frac{Cn(1-q)}{m}\right)^m \le\\
& \le \displaystyle\sum_{m \ge  L}
\left(\frac{Cn^2(1-q)}{m^2}\right)^m \le
\left(\frac{Cn^2(1-q)}{L^2}\right)^L.\qedhere
\end{align*}
\end{proof}
%

\subsubsection{Bounds for $\E(\LIS(\pi))$}\label{sec:bounds_E_LIS}

\begin{proof}[Proof of Proposition~\ref{prop:small_q_LIS_exp_bound}] Suppose
that $n\ge 1$, $0<q\le 1$ and $\pi\sim\mu_{n,q}$. Couple $\pi$ with
the $q$-Mallows process using
Corollary~\ref{cor:perms-that-are-mallows} so that $\pi(i) = n+1 -
p_n(i)$ for all $i$. Define
\begin{equation*}
  I_1:=\{1\le i\le n\,:\, p_i(i)=1\}.
\end{equation*}
Then, by the definition of the Mallows process,
\begin{equation}\label{eq:LIS_I_1_comparison}
  \LIS(\pi)\ge |I_1|.
\end{equation}
Observe that by \eqref{eq:Mallows_folder_dist}, for each $i\ge 1$,
\begin{equation*}
  \P(i\in I_1) = \P(p_i(i)=1) \ge
  1-q.
\end{equation*}
Together with \eqref{eq:LIS_I_1_comparison} this implies that
$\E(\LIS(\pi))\ge n(1-q)$. To see the other direction, define the
set of descents of $\pi$,
\begin{equation*}
  I_2:=\{1\le i\le n-1\,:\, \pi(i)>\pi(i+1)\}.
\end{equation*}
It is not hard to check that
\begin{equation}\label{eq:LIS_I_2_comparison}
  \LIS(\pi)\le n - |I_2|.
\end{equation}
By Corollary~\ref{cor:induced_permutation_Mallows}, for each $1\le
i\le n-1$,
\begin{equation*}
  \P(i\in I_2) = \frac{q}{1+q}.
\end{equation*}
Together with \eqref{eq:LIS_I_2_comparison} this implies that
$\E(\LIS(\pi))\le n-\frac{q}{1+q}(n-1)$.
\end{proof}

We continue to prove the bound \eqref{eq:E(LIS)} of Theorem
\ref{thm:lis-largedev}. Fix $n\ge 1$ and $\frac{1}{2}\le q\le 1 -
\frac{4}{n}$. We make use of the large deviation bounds in
\eqref{eq:LIS_greater_than_L} and \eqref{eq:LIS_less_than_L_UB}
shown previously. Set $L^* := 2C_0 n\sqrt{1-q}$ where $C_0$ is the
constant $C$ appearing in
Theorem \ref{thm:lis-largedev}. Applying
\eqref{eq:LIS_greater_than_L}, for any integer $L \ge L^*$,
\[
\P(\LIS(\pi) \ge L) \le \frac{1}{2^L}.
\]
Thus,
\begin{align}
\E(\LIS(\pi)) \le L^* + \displaystyle\sum_{L>L^*}\p(\LIS(\pi) \ge L)
\le L^* + \displaystyle\sum_{L>L^*} \frac{1}{2^L} \le L^*+1. \nonumber
\end{align}

Now let $c_0$ be the constant $c$ appearing in Theorem
\ref{thm:lis-largedev}. We will prove that
\begin{equation}\label{eq:lower_bound_E_LIS_to_prove}
  \E(\LIS(\pi))\ge \frac{c_0}{4}n\sqrt{1-q}.
\end{equation}
Since $\E(\LIS(\pi))\ge n(1-q)$ by
Proposition~\ref{prop:small_q_LIS_exp_bound}, the bound
\eqref{eq:lower_bound_E_LIS_to_prove} follows when $q\le
1-\frac{c_0^2}{16}$. Assume that $q>1-\frac{c_0^2}{16}$. Since we
have also assumed that $q\le 1 - \frac{4}{n}$ we obtain that
\begin{equation}\label{eq:n_q_relation_lower_bound_E_LIS}
  \frac{c_0}{2}n\sqrt{1-q} > 2n(1-q)\ge 8.
\end{equation}
Thus, defining $L^*:=c_0n\sqrt{1-q}$, it follows that
\begin{equation*}
  L^*\ge \lfloor L^*\rfloor \ge \frac{L^*}{2}\ge n(1-q).
\end{equation*}
Applying the bound \eqref{eq:LIS_less_than_L_UB} and using
\eqref{eq:n_q_relation_lower_bound_E_LIS} gives
\[
\p(\LIS(\pi) < \lfloor L^* \rfloor ) \le \exp\left(-
\frac{c_0n^2(1-q)}{\left\lfloor c_0 n \sqrt{1-q}
\right\rfloor}\right) \le \exp\left(- n \sqrt{1-q} \right) \le
\exp\left(-n(1-q)\right)\le \frac 12.
\]
Therefore,
\begin{align}
\E(\LIS(\pi)) & \ge \lfloor L^* \rfloor (1-\p(\LIS(\pi) < \lfloor
L^* \rfloor )) \ge \frac{L^*}{4}, \nonumber
\end{align}
proving \eqref{eq:lower_bound_E_LIS_to_prove} in the case
$q>1-\frac{c_0^2}{16}$, as required.

%

\subsubsection{The $\LIS$ of elements mapped far by the Mallows process}\label{sec:generalized_bound_on_LIS}

In this section we extend the bound of Proposition~\ref{prop:lis-ub}
to a refined estimate which will be used in Section~\ref{sec:wlln}.
Let $n\ge 1$, $0<q<1$ and let $\pi$ be a random permutation with the
$\mu_{n,q}$ distribution. Consider again the coupling
\eqref{eq:pi_p_coupling_LIS_UB} of $\pi$ with the $q$-Mallows
process $(p_k)$. Fix a real number $a>0$ and define a subset $T$ of
the integers by
\begin{equation*}
  T:=\left\{i\,:\, p_i(i)\ge \frac{a}{1-q}\right\}.
\end{equation*}
Thus, $T$ is the set of all elements which, at the time of their
assignment by the Mallows process, were assigned a value no smaller
than $a/(1-q)$. Let $B\subseteq[n]$ be a contiguous block of
integers, i.e., $B := \{i_0, \ldots, i_0+|B|-1\}$ for some $i_0\ge
1$ such that $i_0 + |B| - 1\le n$. Our main result concerns the
length of the longest increasing subsequence of $\pi$ restricted to
$B\cap T$.
\begin{theorem}\label{thm:generalized_upper_bound_LIS}
Suppose $n\ge 1, a>0$ and $\frac{1}{2} \le q \le 1- \frac{2}{n}$. If
$|B| \ge \frac{a}{1-q}$ then
\[
\p(\LIS(\pi_{B\cap T})\ge L) \le \frac{1}{|B|(1-q)}
\left(\frac{Ce^{-a}|B|^2(1-q)}{L^2}\right)^L
\]
for all integer $L \ge Ce^{-a/2}|B|\sqrt{1-q}$.
\end{theorem}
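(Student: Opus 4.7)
The plan is to adapt the union bound proof of Proposition~\ref{prop:lis-ub}, sharpening each step to account for the additional restrictions $I\subseteq B$ and $I\subseteq T$. Set $A:=\lceil a/(1-q)\rceil$. Since $p_i(i)\le i$ always holds, one has $T\subseteq[A,n]$ and so $B\cap T\subseteq B\cap[A,n]$; as the claimed upper bound is monotone increasing in $|B|$ (the exponent $2L-1$ is positive), one may reduce to the case $B\subseteq[A,n]$ at the outset.

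For an increasing sequence $I=(i_1,\ldots,i_m)\subseteq B$ with $m\ge L$, I would follow the proof of Proposition~\ref{prop:prob-bound-i} and decompose
\[
\P(E_I\cap\{I\subseteq T\})\le\sum_{J}\P(E_{I,J}),
\]
where the sum runs over index assignments $J=(j_1,\ldots,j_m)$ with $j_k\in[A,i_k]$ satisfying the necessary compatibility condition $j_{k+1}-j_k\le i_{k+1}-i_k$. The constraint $j_k\ge A$ is what produces the $e^{-a}$ factor: using the inequality $q^{1/(1-q)}\le e^{-1}$ valid for $q\ge 1/2$, one has $q^{j_k-1}\le Ce^{-a}$, while $i_k\ge A$ yields $1-q^{i_k}\ge 1-e^{-a}$. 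Together these upgrade Lemma~\ref{lem:prob-bound-i-j} to $\P(E_{I,J})\le(C(1-q)e^{-a})^m$ for each valid $J$.

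For the count of valid $J$'s, the substitution $c_k:=j_k-i_k$ turns the compatibility condition into weak decrease of $(c_k)$ and restricts $c_k$ to the interval $[A-i_k,0]$. The number of such weakly decreasing sequences is at most $\binom{i_m-A+m}{m}\le(2e|B|/m)^m$ provided $i_m-A\lesssim|B|$; when $B\subseteq[A,n]$ starts precisely at $A$ this is immediate, and the general case is reduced to it via translation invariance (Lemma~\ref{lem:shift}), with some care needed since the set $T$ itself is defined through the absolute positions of the Mallows process and is not translation-invariant. The outcome is a per-sequence bound $\P(E_I\cap\{I\subseteq T\})\le\left(\frac{Ce^{-a}(1-q)|B|}{m}\right)^m$.

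A union bound over $\binom{|B|}{m}\le(e|B|/m)^m$ choices of $I$ followed by summation over $m\ge L$ then gives
\[
\P(\LIS(\pi_{B\cap T})\ge L)\le\sum_{m\ge L}\left(\frac{Ce^{-a}|B|^2(1-q)}{m^2}\right)^m.
\]
The hypothesis $L\ge Ce^{-a/2}|B|\sqrt{1-q}$ with $C$ large ensures that the $m=L$ term is at most a small absolute constant, so the series decays geometrically; the prefactor $\frac{1}{|B|(1-q)}$ in the theorem's bound arises from quantifying this decay rate more carefully. The main technical obstacle is the combinatorial counting step: ensuring the number of valid $J$'s is controlled by $|B|$ and not by the absolute size $n$ requires both the preliminary reduction $B\subseteq[A,n]$ and a translation argument that handles the position of $B$ within $[A,n]$ cleanly, since $T$ does not transform naturally under the shift.
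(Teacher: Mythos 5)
There is a genuine gap at the central counting step. After replacing the weight $q^{\sum_k j_k}$ by the uniform bound $\P(E_{I,J})\le (C(1-q)e^{-a})^m$, you need the number of admissible $J$ (those with $\frac{a}{1-q}\le j_k\le i_k$ and $j_{k+1}-j_k\le i_{k+1}-i_k$) to be at most $(C|B|/m)^m$. But this count is governed by how far $B$ sits above $a/(1-q)$, not by $|B|$: already for $m=1$ it equals $i_1-\lceil a/(1-q)\rceil+1$, which can be of order $n$, and in general it is of order $\binom{i_m-a/(1-q)+m}{m}$. Your proposed fix via translation invariance does not close this, and you essentially concede the point: Lemma~\ref{lem:shift} concerns only the induced relative order $\pi_I$, whereas the event $E_I\cap\{I\subseteq T\}$ also prescribes the values $p_{i_k}(i_k)$, and neither the set $T$ nor the laws of the $p_i(i)$ transform under a shift of $I$; no coupling is offered that would make $\P(E_I\cap\{I\subseteq T\})$ comparable to its translate. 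Without this, your bound degrades with $n$, defeating the stated purpose of the theorem (uniformity in $n$). The paper's proof is built precisely to avoid this: Lemma~\ref{lem:prob-bound-i-j-2} keeps the factor $q^{\sum_k j_k}$, Lemma~\ref{lem:partition_related_counting} counts admissible $J$ with a \emph{fixed} sum $s=\sum_k j_k$ by a quantity depending only on $s$ and $|B|$ (the transformation $\ell_k=j_k+i_m-i_k+(m-k)$ uses only the gaps $i_m-i_k\le|B|$, so the position of $B$ never enters), and the geometric summation over $s\ge ma/(1-q)$ then produces the $e^{-ma}$ gain while the hypothesis $|B|\ge\frac{a}{1-q}$ keeps the count at $(m|B|)^{m-1}$.

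A secondary, related problem is the prefactor. The factor $\frac{1}{|B|(1-q)}$ in the statement does not come from ``quantifying the decay'' of the final series over $m\ge L$ (that only yields an absolute constant); it comes from the summation over $s$, which contributes a single factor $\frac{1}{1-q}$ and yields the per-sequence bound of Proposition~\ref{prop:prob-bound-i-2} with exponent $m-1$ rather than $m$, namely $(Ce^{-a})^m\bigl(\frac{|B|(1-q)}{m}\bigr)^{m-1}$. Since your uniform bound discards exactly the $q^{\sum j_k}$ weights that produce this, even in the favorable case $\min B\approx\frac{a}{1-q}$ your argument would prove only $\P(\LIS(\pi_{B\cap T})\ge L)\le\bigl(\frac{Ce^{-a}|B|^2(1-q)}{L^2}\bigr)^L$, which is weaker than the stated bound.
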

An important feature of this bound is that it is uniform in $n$. In
fact, the result is similar to the upper bound of
\eqref{eq:LIS_greater_than_L} in Theorem~\ref{thm:lis-largedev},
with $n$ replaced by $e^{-a/2}|B|$.

Observe the trivial inequality $\LIS(\pi_{B\cap T})\le \LIS(\pi_B)$.
It implies that if $a\le10$, say, the theorem follows from
Corollary~\ref{cor:induced_permutation_Mallows} and
Proposition~\ref{prop:lis-ub}. Thus we assume in the sequel that
$a>10$. Assume in addition that $\frac{1}{2} \le q \le 1-
\frac{2}{n}$, as in the theorem.

The proof strategy is a modification of the argument of
Proposition~\ref{prop:lis-ub}, using a union bound
over all possible increasing subsequences which are subsets of
$B\cap T$. Recall the definitions of the events $E_{I,J}$ and $E_I$
from \eqref{eq:E_I_J_event} and \eqref{eq:E_I_event}.
\begin{lemma}\label{lem:prob-bound-i-j-2}
Let $m\ge 1$. Let $I=(i_1,\ldots,i_m)$ be an increasing sequence of
integers, and let $J=(j_1,\ldots,j_m)$ be a sequence of integers
satisfying $\frac{a}{1-q}\le j_k \le i_k$. Then
\[
\p(E_{I,J}) \le (C(1-q))^m q^{\sum j_k}.
\]
\end{lemma}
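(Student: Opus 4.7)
The plan is direct and computational, relying on the independence built into the Mallows process. By the construction of $(p_k)$, the random variables $p_{i_k}(i_k)$ for $k=1,\ldots,m$ are independent, so the event $E_{I,J}$ factorizes exactly:
\[
\P(E_{I,J}) = \prod_{k=1}^{m} \P(p_{i_k}(i_k) = j_k) = \prod_{k=1}^{m} \frac{(1-q)q^{j_k-1}}{1-q^{i_k}},
\]
using the truncated geometric formula \eqref{eq:Mallows_folder_dist}. Rewriting, this equals $q^{-m}(1-q)^m \frac{q^{\sum j_k}}{\prod_k (1-q^{i_k})}$, so the task reduces to bounding the product $\prod_k (1-q^{i_k})$ from below by $c^m$ for some absolute constant $c>0$, and absorbing the stray $q^{-m}$ into $C^m$.

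The key observation for the lower bound is that the hypothesis $j_k \ge a/(1-q)$ together with $i_k \ge j_k$ gives $i_k \ge a/(1-q)$. Combining this with the elementary inequality $q^{1/(1-q)} \le e^{-1}$ (which follows from $\log q \le -(1-q)$) yields $q^{i_k} \le q^{a/(1-q)} \le e^{-a}$, and hence $1 - q^{i_k} \ge 1 - e^{-a}$. Since the surrounding argument has already reduced to the case $a > 10$, we have $1 - e^{-a} \ge 1/2$, so $\prod_k (1-q^{i_k}) \ge 2^{-m}$. The factor $q^{-m}$ is likewise at most $2^m$ because $q \ge 1/2$ in the context of Theorem \ref{thm:generalized_upper_bound_LIS}. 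Putting these together gives
\[
\P(E_{I,J}) \le (4(1-q))^m q^{\sum j_k},
\]
which is the claimed bound.

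I do not expect any real obstacle here: the lemma is essentially a direct substitution into the Mallows-process formula followed by two bookkeeping bounds. The only subtlety to watch is that the constant $C$ must be absolute, which is what forces us to use both the assumption $a > 10$ (to make $1 - e^{-a}$ bounded away from $0$) and $q \ge 1/2$ (to control $q^{-m}$); both are present from the setup of the theorem even though they are not re-stated in the lemma.
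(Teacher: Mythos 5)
Your proposal is correct and follows essentially the same route as the paper: substitute the truncated geometric formula \eqref{eq:Mallows_folder_dist} for the independent variables $p_{i_k}(i_k)$, bound each denominator $1-q^{i_k}$ away from $0$ using $i_k\ge j_k\ge a/(1-q)$ (the paper only needs $i_k\ge 1/(1-q)$, which follows from $a>10$), and absorb the factor $q^{-1}$ per coordinate via $q\ge\frac{1}{2}$. The reliance on the standing assumptions $a>10$ and $q\ge\frac{1}{2}$ from the setup of Theorem~\ref{thm:generalized_upper_bound_LIS} is exactly what the paper does as well.
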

\begin{proof} Observe that, since $a>10$, we must have $i_1\ge 1/(1-q)$. Thus, by
\eqref{eq:Mallows_folder_dist} and our assumption that $q\ge
\frac{1}{2}$,
\begin{equation*}
\p(p_{i_k}(i_k)=j_k\text{ for all $1\le k\le m$}) =
\displaystyle\prod_{1 \le k \le
  m}\frac{(1-q)q^{j_{k}-1}}{1-q^{i_k}} \le \left(C(1-q)\right)^m q^{\sum j_k}. \qedhere
\end{equation*} 
\end{proof}
We need the following combinatorial lemma, inspired by a related
fact on partitions (see, e.g., \cite[Theorem 15.1]{LinWil01}).
\begin{lemma}\label{lem:partition_related_counting}
Let $1\le m\le |B|$ and let $I=(i_1, \ldots, i_m)\subseteq B$ be an
increasing sequence of integers. For an integer $s\ge 1$ define a
family of integer sequences by
\begin{equation*}
  \mathcal{J}_{s,I}':=\left\{(j_1,\ldots, j_m)\,:\,\sum_{k=1}^m j_k = s,\;
  j_k\ge 0\;\text{ and }\; j_{k+1} - j_k \le
  i_{k+1} - i_k\right\}.
\end{equation*}
Then
\begin{equation*}
  |\mathcal{J}'_{s,I}| \le \left(\frac{C}{m^2}\right)^{m-1}\left(s^{m-1} + (m|B|)^{m-1}\right).
\end{equation*}
\end{lemma}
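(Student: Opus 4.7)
The plan is to bound $|\mathcal{J}'_{s,I}|$ via a reduction to counting restricted partitions, then an elementary volume estimate on a lattice polytope. First I would introduce the substitution $c_k := j_k + (i_m - i_k)$. A direct check shows that $j_{k+1} - j_k \le i_{k+1} - i_k$ is equivalent to $c_k \ge c_{k+1}$, that $j_k \ge 0$ is equivalent to $c_k \ge i_m - i_k \ge 0$, and that $\sum_k c_k = s + R$ with $R := \sum_k (i_m - i_k) \le (m-1)(|B|-1) \le m|B|$. Setting $S := s + R$, this shows that $(j_k) \mapsto (c_k)$ injects $\mathcal{J}'_{s,I}$ into the set of weakly decreasing $m$-tuples of nonnegative integers summing to $S$, so $|\mathcal{J}'_{s,I}| \le p(S, m)$, where $p(S, m)$ denotes the number of partitions of $S$ with at most $m$ parts.

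Next, I would pass to the conjugate partition: $p(S, m)$ equals the number of partitions of $S$ whose parts all have size at most $m$. Writing such a partition by its multiplicity vector $(b_1, \ldots, b_m) \in \mathbb{Z}_{\ge 0}^m$ (so $b_k$ is the number of parts equal to $k$), we have $\sum_{k=1}^m k b_k = S$. Since $b_1$ is determined by $(b_2, \ldots, b_m)$ via $b_1 = S - \sum_{k\ge 2} k b_k$, the constraint $b_1 \ge 0$ becomes $\sum_{k\ge 2} k b_k \le S$, yielding
\[
p(S, m) \;=\; \#\bigl\{(b_2,\ldots,b_m) \in \mathbb{Z}_{\ge 0}^{m-1} : \textstyle\sum_{k=2}^m k b_k \le S\bigr\}.
\]

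The heart of the argument is to bound this lattice count by a volume. Each valid $(b_2, \ldots, b_m)$ labels a distinct half-open unit cube $\prod_{k=2}^m [b_k, b_k+1) \subseteq \mathbb{R}^{m-1}$; for any point $(x_k)$ in this cube one has $\sum_{k=2}^m k x_k < \sum_k k b_k + \sum_{k=2}^m k \le S + m^2$. Since the cubes are pairwise disjoint,
\[
p(S, m) \;\le\; \operatorname{Vol}\bigl\{(x_2, \ldots, x_m) \in \mathbb{R}_{\ge 0}^{m-1} : \textstyle\sum_{k=2}^m k x_k \le S + m^2\bigr\}.
\]
The change of variables $y_k := k x_k$ (with Jacobian $\prod_{k=2}^m 1/k = 1/m!$) transforms this polytope into the standard simplex $\{y_k \ge 0,\, \sum y_k \le S+m^2\}$ in $\mathbb{R}^{m-1}$, whose volume is $(S+m^2)^{m-1}/(m-1)!$. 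Hence $p(S, m) \le (S+m^2)^{m-1}/\bigl((m-1)!\,m!\bigr)$.

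Finally, since $I \subseteq B$ has $m$ distinct elements we have $|B| \ge m$, so $S + m^2 \le s + m|B| + m^2 \le s + 2m|B|$. Combining with $(x+y)^{m-1} \le 2^{m-1}(x^{m-1}+y^{m-1})$ and the Stirling bound $(m-1)!\,m! \ge m^{2m-1}/e^{2m}$ absorbs all polynomial prefactors in $m$, yielding $|\mathcal{J}'_{s,I}| \le (C/m^2)^{m-1}(s^{m-1}+(m|B|)^{m-1})$ for a suitable absolute constant $C$. The main obstacle is identifying the right lattice polytope and carrying out the volume comparison in the third step; everything else is elementary bookkeeping, with the small values $m=1,2$ checked directly.
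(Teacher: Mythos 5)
Your proof is correct, and its first step is essentially the paper's: the substitution $c_k=j_k+(i_m-i_k)$ is the same shift the paper uses (the paper adds the extra staircase $m-k$, writing $\ell_k=j_k+i_m-i_k+(m-k)$), and in both cases $\mathcal{J}'_{s,I}$ is injected into a set of monotone nonnegative tuples with controlled sum. Where you diverge is in how that set is counted. The paper strictifies the sequence, observes that all $m!$ permutations of a strictly decreasing tuple are distinct solutions of $x_1+\cdots+x_m=s'$, and so bounds $m!\,|\mathcal{J}'_{s,I}|$ by the stars-and-bars count $\binom{s'+m-1}{m-1}$. You instead bound the number of partitions of $S=s+R$ into at most $m$ parts by passing to the conjugate partition, encoding it by its multiplicity vector, and comparing the lattice count to the volume of the simplex $\{x\ge 0:\sum_{k=2}^m kx_k\le S+m^2\}$, giving $p(S,m)\le (S+m^2)^{m-1}/\bigl((m-1)!\,m!\bigr)$. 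Both are standard proofs of essentially the same classical bound on restricted partitions (the fact the paper cites from van Lint--Wilson), and both lead to the identical final bookkeeping $(s+2m|B|)^{m-1}\le 2^{m-1}\bigl(s^{m-1}+(2m|B|)^{m-1}\bigr)$ with the $m$-dependence absorbed via Stirling. The paper's count is a bit shorter and purely combinatorial; yours requires the extra care of handling the $+m^2$ slack from the unit cubes and the degenerate cases $m=1,2$, which you correctly flag, but it is equally rigorous and gives the same constant structure $(C/m^2)^{m-1}$.
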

\begin{proof}
  Define a transformation from a sequence $J\in\mathcal{J}'_{s,I}$
  to a sequence $(\ell_1,\ldots, \ell_m)$ by
  \begin{equation*}
    \ell_k := j_k + i_m - i_k
    + (m-k).
  \end{equation*}
  It follows from the definition of $\mathcal{J}'_{s,I}$ that each
  $\ell_k$ is an integer, $\ell_1>\ell_2>\cdots\ell_m\ge 0$ and
  \begin{equation*}
    \sum_{k=1}^m \ell_k = s+mi_m - \sum_{k=1}^m i_k + \frac{m(m-1)}{2}=:s'.
  \end{equation*}
  Thus, all $m!$ permutations of $(\ell_1,\ldots, \ell_m)$ are
  distinct and each such permutation solves the equation
  \begin{equation}\label{eq:J_integer_problem}
    x_1 + \cdots + x_m = s'\quad\text{ where each $x_i$ is a
    non-negative integer}.
  \end{equation}
  Since the transformation from $J$ to $(\ell_k)$ is one-to-one, we
  conclude that $m!|\mathcal{J}'_{s,I}|$ is bounded above by the
  number of solutions to \eqref{eq:J_integer_problem}. Thus,
  \begin{equation*}
    |\mathcal{J}'_{s,I}|\le \frac{1}{m!}{s' + m - 1 \choose m-1}\le \left(\frac{C(s'
    + m)}{m^2}\right)^{m-1}\le
    \left(\frac{C(s+2m|B|)}{m^2}\right)^{m-1},
  \end{equation*}
  and the lemma follows from the fact that $(s + 2m|B|)^{m-1}\le
  (2\max(s, 2m|B|))^{m-1}\le (2s)^{m-1} + (4m|B|)^{m-1}$.
\end{proof}

\begin{proposition}\label{prop:prob-bound-i-2}
Let $1\le m\le |B|$ and let $I=(i_1, \ldots, i_m)\subseteq B$ be an
increasing sequence of integers. If $|B| \ge \frac{a}{1-q}$ then
\begin{equation*}
\p(E_I\cap \{I\subseteq T\}) \le (Ce^{-a})^m\left(\frac{|B|(1-q)}{m}
\right)^{m-1}.
\end{equation*}
\end{proposition}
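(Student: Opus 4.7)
My plan is to adapt the union-bound strategy of Proposition~\ref{prop:prob-bound-i} to the refined setting where $I \subseteq T$. Fix $I = (i_1,\ldots,i_m) \subseteq B$; I may assume $i_1 \ge \lceil a/(1-q) \rceil$, since otherwise the event $\{I \subseteq T\}$ is empty. The first step will be the decomposition
\[
\p(E_I \cap \{I \subseteq T\}) = \sum_{J \in \mathcal{J}_T}\p(E_{I,J}),
\]
where $\mathcal{J}_T$ collects all integer sequences $J = (j_1,\ldots,j_m)$ with $\lceil a/(1-q)\rceil \le j_k \le i_k$ satisfying the Mallows-process necessary condition $j_{k+1}-j_k \le i_{k+1}-i_k$ derived in~\eqref{eq:j_k_increasing_nec_cond}. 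Lemma~\ref{lem:prob-bound-i-j-2} will then pull out a factor $(C(1-q))^m$ and reduce the task to bounding $\sum_{J \in \mathcal{J}_T} q^{\sum_k j_k}$.

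The heart of the calculation will be grouping by $s := \sum_k j_k$ and performing the shift $j'_k := j_k - \lceil a/(1-q)\rceil$, which is a bijection from $\{J \in \mathcal{J}_T : \sum j_k = s\}$ into $\mathcal{J}'_{s-s_0, I}$ of Lemma~\ref{lem:partition_related_counting}, with $s_0 := m\lceil a/(1-q)\rceil$. Summing over $s \ge s_0$ and invoking Lemma~\ref{lem:partition_related_counting} yields
\[
\sum_{J \in \mathcal{J}_T} q^{\sum j_k} \le q^{s_0} \left(\frac{C}{m^2}\right)^{m-1} \left(\sum_{s'' \ge 0} (s'')^{m-1} q^{s''} + (m|B|)^{m-1} \sum_{s'' \ge 0} q^{s''} \right).
\]
I would then apply the elementary identities $\sum_{s''\ge 0}(s'')^{m-1} q^{s''} \le (m-1)!/(1-q)^m$ and $\sum_{s'' \ge 0} q^{s''} = 1/(1-q)$, combined with the crucial estimate $q^{s_0} \le e^{-ma}$, which follows from $\ln q \le -(1-q)$ (equivalently $q^{1/(1-q)} \le e^{-1}$).

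Putting everything together, the hypothesis $|B|(1-q) \ge a$ forces the $(m|B|)^{m-1}/(1-q)$ term to dominate $(m-1)!/(1-q)^m$, so the bracket collapses to at most $2(m|B|)^{m-1}/(1-q)$. Using $(m-1)! \le m^{m-1}$ and straightforward rearrangement then gives $\p(E_I \cap \{I \subseteq T\}) \le C^{2m-1} e^{-ma}\bigl((1-q)|B|/m\bigr)^{m-1}$, which is the claimed bound after absorbing the numerical constant into $(Ce^{-a})^m$. The main obstacle I anticipate is just the bookkeeping in this final step --- verifying that the two competing terms from Lemma~\ref{lem:partition_related_counting} collapse cleanly under the $|B|(1-q) \ge a$ hypothesis, and that all the powers of $m$, $1-q$ and $|B|$ line up to produce exactly the exponents in the target bound; no deeper analytic input beyond the partition-style counting of Lemma~\ref{lem:partition_related_counting} is required.
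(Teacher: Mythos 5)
Your proof is correct and takes essentially the same approach as the paper's: a union bound over the initial positions $J$, Lemma~\ref{lem:prob-bound-i-j-2}, the counting Lemma~\ref{lem:partition_related_counting} applied to sequences satisfying \eqref{eq:j_k_increasing_nec_cond}, the estimate $q^{ma/(1-q)}\le e^{-ma}$, and the hypothesis $|B|\ge \frac{a}{1-q}$ to let the $(m|B|)^{m-1}$ term absorb the other contribution. The only differences are cosmetic bookkeeping (you shift the $j_k$'s by $\lceil a/(1-q)\rceil$ before counting and use $\sum_{s\ge 0}s^{m-1}q^{s}\le (m-1)!/(1-q)^{m}$, whereas the paper counts the unshifted sequences and bounds the tail sum $\sum_{s\ge ma/(1-q)}s^{m-1}q^{s}$ by comparing consecutive terms), together with two trivial touch-ups: your opening display should be an inequality once you replace $\P(E_I\cap E_{I,J})$ by $\P(E_{I,J})$, and your final domination step needs $|B|(1-q)\ge 1$, which is available because $a>10$ is a standing assumption in this section.
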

\begin{proof}
Fix a sequence $I$ as in the proposition. For an integer $s\ge
ma/(1-q)$, define a family of integer sequences by
\begin{equation*}
  \mathcal{J}_{s,I}:=\left\{(j_1,\ldots, j_m)\,:\,\sum_{k=1}^m j_k = s,\;
  \frac{a}{1-q}\le j_k\le i_k\;\text{ and the event $E_I\cap E_{I,J}$ is non-empty}\right\}.
\end{equation*}
As in Proposition~\ref{prop:prob-bound-i},
\eqref{eq:j_k_increasing_nec_cond} holds for all
$J\in\mathcal{J}_{s,I}$. Thus $\mathcal{J}_{s,I}\subseteq
\mathcal{J}'_{s,I}$ and Lemma~\ref{lem:partition_related_counting}
implies that
\begin{equation*}
  |\mathcal{J}_{s,I}| \le \left(\frac{C}{m^2}\right)^{m-1}\left(s^{m-1} + (m|B|)^{m-1}\right).
\end{equation*}
Combining this with Lemma~\ref{lem:prob-bound-i-j-2} we obtain that
\begin{align}
\p(E_I\cap \{I\subseteq T\}) &= \displaystyle\sum_{s\ge
\frac{ma}{1-q}}\sum_{J\in\mathcal{J}_{s,I}}\p(E_I
  \cap E_{I,J}) \le  \sum_{s\ge
\frac{ma}{1-q}}\sum_{J\in\mathcal{J}_{s,I}}\p(E_{I,J}) \le
\left(C(1-q)\right)^m \sum_{s\ge \frac{ma}{1-q}} |\mathcal{J}_{s,I}|
q^{s} \le\nonumber\\
&\le \left(C(1-q)\right)^m m^{-2(m-1)} \left(\sum_{s\ge
\frac{ma}{1-q}} s^{m-1}q^{s} + \sum_{s\ge \frac{ma}{1-q}}
(m|B|)^{m-1}q^{s}\right).\label{eq:E_I_first_probability_estimate}
\end{align}
To estimate the first sum in
\eqref{eq:E_I_first_probability_estimate}, observe that the ratio of
consecutive elements in it is at most $(1+1/s)^{m-1}q\le 1 -
(1-q)/2$ since $s\ge ma/(1-q)\ge 10m/(1-q)$. Thus,
\begin{align*}
  &\sum_{s\ge
\frac{ma}{1-q}} s^{m-1}q^{s} \le \frac{2}{1-q}\left( \frac{ma}{1-q}
\right)^{m-1} q^{ma/(1-q)}\le
\frac{2e^{-ma}}{1-q}\left( \frac{ma}{1-q} \right)^{m-1}  \quad\text{ and }\\
  &\sum_{s\ge \frac{ma}{1-q}}
(m|B|)^{m-1}q^s \le \frac{1}{1-q} (m|B|)^{m-1} q^{ma/(1-q)}  \le
\frac{e^{-ma}(m|B|)^{m-1}}{1-q}.
\end{align*}
Plugging these bounds into \eqref{eq:E_I_first_probability_estimate}
and using the assumption $|B| \ge \frac{a}{1-q}$ yields the result
of the proposition.
\end{proof}
\begin{proof}[Proof of Theorem~\ref{thm:generalized_upper_bound_LIS}]
For $1\le m\le |B|$, denote by $\mathcal{I}_m$ the set of all
increasing integer sequences $I=(i_1, \ldots, i_m)\subseteq B$.
Observe that $|\mathcal{I}_m|= {|B| \choose m}$. Let $C_1$ be a
large absolute constant. Applying a union bound and Proposition
\ref{prop:prob-bound-i-2} we obtain for all integer $L\ge C_1
e^{-a/2}|B|\sqrt{1-q}$
that
\begin{align*}
\p(\LIS(\pi_{B\cap T})\ge L)  &\le \displaystyle\sum_{\substack{L\le
m \le |B|\\I\in\mathcal{I}_m}} \p(E_I\cap \{I\subseteq T\}) \le
\displaystyle\sum_{m \ge  L} {|B| \choose m}
(Ce^{-a})^m\left(\frac{|B|(1-q)}{m} \right)^{m-1} \le\\
& \le \frac{1}{|B|(1-q)}\displaystyle\sum_{m \ge  L}
m\left(\frac{Ce^{-a}|B|^2(1-q)}{m^2}\right)^m \le
\frac{1}{|B|(1-q)}\displaystyle\sum_{m \ge  L}
\left(\frac{Ce^{-a}|B|^2(1-q)}{m^2}\right)^m \le\\
&\le \frac{2}{|B|(1-q)} \left(\frac{Ce^{-a}|B|^2(1-q)}{L^2}\right)^L
\end{align*}
where for the last inequality we took the constant $C_1$ to be
sufficiently large.
\end{proof}

\section{Law of large numbers for $\LIS(\pi)$} \label{sec:wlln}
In this section we prove Theorem \ref{thm:lp-convergence-lis}. Let
$\pi\sim\mu_{n,q}$. We wish to show that
\begin{align}\label{eq:LIS-limit-I}
\frac{\LIS(\pi)}{n\sqrt{1-q}} \to 1\quad \mathrm{\ as \ }\; n \to
\infty,\; q \to 1,\; n(1-q) \to \infty
\end{align}
in $L_p$ for every $0< p < \infty$. The restrictions on $n$ and $q$
in the above limit should be interpreted as saying that $n \to
\infty$ and $q \to 1$ in any way so that $n(1-q) \to \infty$.

\subsection{Block decomposition}\label{sec:block_decomposition}
Let $n=n(q)$ be a function of $q$ such that
\begin{align}\label{eq:n_q_limit}
\lim_{q \to 1}\; n = \infty\quad\text{ and}\quad\lim_{q \to 1}\;
n(1-q)= \infty.
\end{align}
Let $\pi\sim\mu_{n,q}$. To prove \eqref{eq:LIS-limit-I} it suffices
to show that
\begin{align*}
\lim_{q \to 1}\; \frac{\LIS(\pi)}{n\sqrt{1-q}} =1
\end{align*}
in $L_p$ for every $0< p < \infty$. As mentioned in the
introduction, we will achieve this by partitioning $\{1,\ldots, n\}$
into blocks of size $\frac{\beta}{1-q}$, for some large $\beta$,
considering the longest increasing subsequence of the permutation
restricted to each block, and showing that the concatenation of
these subsequences is close to being an increasing subsequence for
the entire permutation. We proceed to make this idea formal.

Let $\beta>0$ and define a function $\beta(q)$ such that
$\beta(q)/(1-q)$ is an integer and $\beta(q) \to \beta$ as $q \to
1$. As a note to the reader we remark that we would have gladly set
$\beta(q)$ equal to $\beta$ in the rest of our argument, but we need
$\beta(q)/(1-q)$ to be an integer for technical reasons.
Define
\begin{align}\label{eq:m_beta_def}
 m:=\left \lfloor \frac{n(1-q)}{\beta(q)} \right \rfloor
\end{align}
and for $1 \le i \le {m}$, define
\begin{align*}
{B}_i:=\left\{ (i-1) \frac{\beta(q)}{1-q}+1,\ldots, i
\frac{\beta(q)}{1-q}\right\}.
\end{align*}
Thus the ${B}_i$ are blocks of size $\beta(q)/(1-q)$ of consecutive
integers which, possibly along with a block of smaller size
${B}_{{m}+1}:=\big\{{m}\frac{\beta(q)}{(1-q)}+1,\ldots,n\big\}$,
partition $\{1,\ldots, n\}$. For $1 \le i \le m+1$, let
\begin{align*}
X_i:= \LIS(\pi_{B_i})
\end{align*}
be the length of the longest increasing subsequence of the
restriction of $\pi$ to $B_i$. By Lemma \ref{lem:indep-seqs}, the
$X_i$ are independent. By
Corollary~\ref{cor:induced_permutation_Mallows}, each $X_i$ has the
distribution of the length of the longest increasing subsequence of
a Mallows permutation of length $|B_i|$ and parameter $q$.

We regard the above objects, $\beta(q)$, $m$, $(B_i)$ and $(X_i)$,
as implicit functions of $\beta$ and $q$. In particular, when we
take the limits $q \to 1$ and $\beta \to \infty$ below it will be
assumed that for every $\beta$ and $q$ these objects are defined by
the above recipe.

Using the triangle inequality,
\begin{align*}
\left|\frac{\LIS(\pi)}{n\sqrt{1-q}}-1\right| \le \left|
\frac{\LIS(\pi) - \sum_{i=1}^{m}X_i}{n\sqrt{1-q}}\right|
+ \left| \frac{\sum_{i=1}^{m}X_i}{n\sqrt{1-q}}
-1\right|.
\end{align*}
We will prove that
\begin{align}
\limsup_{\beta \to \infty}\; \limsup_{q \to 1}\; \E\left( \left|
\frac{\LIS(\pi) -
    \sum_{i=1}^{m}X_i}{n\sqrt{1-q}} \right| \right)
=0, \label{eq:LIS-Mallows}\\
\limsup_{\beta \to \infty}\; \limsup_{q \to 1}\; \E \left(\left|
\frac{\sum_{i=1}^{m}X_i}{n\sqrt{1-q}} -1 \right|\right)
=0. \label{eq:Mueller-Starr}
\end{align}
These equalities 
imply that
\begin{align}
\limsup_{\beta \to \infty}\; \limsup_{q \to 1}\; \E\left( \left|
  \frac{\LIS(\pi)}{n\sqrt{1-q}} -1 \right| \right) =0 \nonumber
\end{align}
and since $\pi$ does not depend on $\beta$, in fact
\begin{align}
\lim_{q \to 1}\; \E\left( \left|
  \frac{\LIS(\pi)}{n\sqrt{1-q}} -1 \right| \right) =0. \nonumber
\end{align}
In other words,
\[
\frac{\LIS(\pi)}{n\sqrt{1-q}} \stackrel{L^1}{\to} 1\quad\text{as
$q\to 1$}.
\]
Convergence in $L^1$ implies convergence in probability. By our
large deviation bounds, Theorem~\ref{thm:lis-largedev}, for any $0<
p<\infty$, we have
\begin{align}\label{eq:LIS_uniform_integrability}
\limsup_{q\to 1}\; \E\left(\left| \frac{\LIS(\pi)}{n\sqrt{1-q}}
\right|^p\right) < \infty.
\end{align}
By considering some $p'>p$ we conclude that for each fixed $p$,
$\left\{\left| \frac{\LIS(\pi)}{n\sqrt{1-q}} \right|^p\right\}$,
regarded as a set of random variables indexed by $q$, is uniformly
integrable (starting from $q$ sufficiently close to $1$) and hence
\[
\frac{\LIS(\pi)}{n\sqrt{1-q}} \stackrel{L^p}{\to} 1\quad\text{for
all $0< p <\infty$}.
\]

In the following sections, we prove \eqref{eq:LIS-Mallows} using
properties of the Mallows process and show how
\eqref{eq:Mueller-Starr} follows from the results of Mueller and
Starr \cite{MueSta11}.

\subsection{Comparing $\LIS$ with $\sum X_i$}
\label{sec:LIS-Xi-comparison}
In this section we establish \eqref{eq:LIS-Mallows}. Recall that
$X_i$ is the length of the longest increasing subsequence of
$\pi$ restricted to $B_i$. Since the $(B_i)$ partition
$[n]$
it follows trivially that
\begin{align}\label{eq:lis-upper}
\LIS(\pi) \le \displaystyle\sum_{i=1}^{m+1}X_i.
\end{align}
Next, we show a bound in the other direction. Recalling the
$q$-Mallows process of Section~\ref{sec:Mallows_process}, we now use
the coupling of $\pi$ and $(p_i)$,
\begin{equation*}
\pi(j) = n+1-p_n(j),
\end{equation*}
introduced in Corollary~\ref{cor:perms-that-are-mallows}. Let $a =
a(\beta)>0$ be any function of $\beta$ satisfying
\begin{equation}\label{eq:a_function_prop}
  a\to\infty\quad\text{and}\quad\frac{a}{\beta}\to 0\quad\text{as $\beta\to
  \infty$}.
\end{equation}
For each $i$, let $E_i$ be the subset of elements of the block $B_i$
whose final position, after the block $B_i$ is assigned by the
Mallows process, is at most $a/(1-q)$. That is,
\[
E_i := \left\{j \in B_i \ : \ p_{\max B_i}(j) \le
\frac{a}{1-q}\right\}.
\]
Let $F_i$ be the subset of $B_i$ which is initially assigned a
position larger than $a/(1-q)$ by the Mallows process. That is,
\[
F_i := \left\{j \in B_i \ : \ p_j(j) >
\frac{a}{1-q}\right\}.
\]

Let $I_i\subseteq B_i$ be the indices of an (arbitrary)
longest increasing subsequence in the restriction of $\pi$ to
$B_i$, so that $|I_i|=X_i$.
Define
\begin{equation}\label{eq:I_i_prime_def}
I_i' := I_i \setminus (E_i \cup F_i).
\end{equation}
The definition of $B_i, E_i$ and $F_i$ implies
that $\cup_i I_i'$ is a set of indices of an increasing
subsequence in $\pi$. To see this, let $j,k\in\cup_i I_i'$
satisfy $j<k$. If $j,k\in I_i$ for some $i$ then $\pi(j)<\pi(k)$ by
definition of $I_i$. Otherwise $j\in I_{i_1} \setminus
(E_{i_1} \cup F_{i_1})$ and $k\in I_{i_2} \setminus
(E_{i_2} \cup F_{i_2})$ for some $i_1<i_2$. Then, by the
definitions of $F_{i_2}$ and $E_{i_1}$,
\begin{align*}
p_k(k)  \le \frac{a}{1-q} <p_{\max B_{i_1}}(j) \le p_{k}(j),
\end{align*}
which implies that $p_n(k)<p_n(j)$, so that $\pi(k)>\pi(j)$. Thus,
\begin{equation}\label{eq:LIS_first_lower_bound}
  \LIS(\pi) \ge \sum_{i=1}^{m} |I_i'|.
\end{equation}
Moreover, the definition of $I_i$ and \eqref{eq:I_i_prime_def}
implies that
\begin{equation*}
  X_i = |I_i| \le |I_i'| + \LIS(\pi_{E_i}) +\LIS(\pi_{F_i}),
\end{equation*}
%
so that together with \eqref{eq:LIS_first_lower_bound} we have
\begin{align}\label{eq:lis-lower}
\LIS(\pi) \ge \sum_{i=1}^{m}X_i
-\sum_{i=1}^{m}\LIS(\pi_{E_i}) -
\sum_{i=1}^{m}\LIS(\pi_{F_i}).
\end{align}
Thus, from the upper and lower bounds \eqref{eq:lis-upper} and \eqref{eq:lis-lower}, we deduce that
\begin{align*}
\E\Bigg[\Bigg|\LIS(\pi)-  \sum_{i=1}^{m}X_i \Bigg|
\Bigg] \le \sum_{i=1}^{m}\E\left(\LIS(\pi_{E_i})\right)
+\sum_{i=1}^{m}\E\left(\LIS(\pi_{F_i})\right) +\E\left(
  X_{m+1}\right).
\end{align*}

Relation \eqref{eq:LIS-Mallows} is a direct consequence of the next
lemma, which provides asymptotic bounds for each of the terms on the
right-hand side.
\begin{lemma}
\begin{align}
&\limsup_{\beta \to \infty}\; \limsup_{q \to 1}\; \E\left(
\frac{X_{m+1}}{n\sqrt{1-q}} \right)
=0,\label{eq:leftover_LIS_bound}\\
&\limsup_{\beta \to \infty}\; \limsup_{q \to 1}\; \E\left(
\frac{\sum_{i=1}^{m} \E(\LIS(\pi_{E_i}))}{n\sqrt{1-q}}
\right)
=0,\label{eq:LIS_E_I_bound}\\
&\limsup_{\beta \to \infty}\; \limsup_{q \to 1}\; \E\left(
\frac{\sum_{i=1}^{m} \E(\LIS(\pi_{F_i}))}{n\sqrt{1-q}}
\right) =0.\label{eq:LIS_F_i_bound}
\end{align}
%
%
%
%
\end{lemma}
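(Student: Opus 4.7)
The plan is to prove each of \eqref{eq:leftover_LIS_bound}--\eqref{eq:LIS_F_i_bound} separately, with the latter two handled by different applications of Theorem~\ref{thm:generalized_upper_bound_LIS}. For \eqref{eq:leftover_LIS_bound}, Corollary~\ref{cor:induced_permutation_Mallows} gives $\pi_{B_{m+1}}\sim\mu_{|B_{m+1}|,q}$ with $|B_{m+1}|<\beta(q)/(1-q)$, and the expectation bound in Theorem~\ref{thm:lis-largedev} (when its hypotheses hold) together with the Mueller--Starr bound $\E(\LIS)=O(\sqrt{k})$ for Mallows permutations of size $k$ with $k(1-q)$ bounded yield $\E(X_{m+1})\le C\beta(q)/\sqrt{1-q}$. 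Dividing by $n\sqrt{1-q}$ produces $O(\beta(q)/(n(1-q)))$, which tends to $0$ since $n(1-q)\to\infty$.

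For \eqref{eq:LIS_F_i_bound}, observe that $F_i=B_i\cap T$ in the notation of Theorem~\ref{thm:generalized_upper_bound_LIS}. Since $|B_i|=\beta(q)/(1-q)\ge a/(1-q)$ once $\beta\ge a$, Theorem~\ref{thm:generalized_upper_bound_LIS} applies directly with $B=B_i$. Integrating its tail bound yields $\E(\LIS(\pi_{F_i}))\le Ce^{-a/2}\beta(q)/\sqrt{1-q}+O(1/\beta(q))$, so summing over $i=1,\ldots,m$ and dividing by $n\sqrt{1-q}$ gives $Ce^{-a/2}+o(1)$ as $q\to 1$; since $a(\beta)\to\infty$, the $\limsup_{\beta\to\infty}$ is $0$.

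The heart of the argument is \eqref{eq:LIS_E_I_bound}, which I would establish by a reversal-and-inverse reduction. Let $N:=\max B_i$ and, using Corollary~\ref{cor:perms-that-are-mallows} applied to the truncated process $(p_k)_{k\le N}$, set $\rho(j):=N+1-p_N(j)$ for $j\le N$, so that $\rho\sim\mu_{N,q}$. Fact~\ref{fact:indep-of-orderings-on-blocks} ensures that the relative order of $p_n$ on $E_i\subseteq B_i$ matches that of $p_N$ on $E_i$, hence $\LIS(\pi_{E_i})=\LIS(\rho_{E_i})$. The general identity $\LIS(\sigma_S)=\LIS((\sigma^{-1})_{\sigma(S)})$ (immediate from the fact that inversion turns an increasing subsequence into an increasing subsequence), the inclusion $\rho(E_i)\subseteq[N+1-a/(1-q),N]$ (direct from the definition of $E_i$), and the monotonicity of $\LIS$ under set inclusion combine to give $\LIS(\rho_{E_i})\le\LIS((\rho^{-1})_{[N+1-a/(1-q),N]})$. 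By Lemma~\ref{lem:reversal_inverse_Mallows} we have $\rho^{-1}\sim\mu_{N,q}$, and by Corollary~\ref{cor:induced_permutation_Mallows} its restriction to the consecutive block of length $a/(1-q)$ is distributed as $\mu_{a/(1-q),q}$. Theorem~\ref{thm:lis-largedev} then gives $\E(\LIS(\pi_{E_i}))\le Ca/\sqrt{1-q}$; summing over $i\le m\le n(1-q)/\beta(q)$ and dividing by $n\sqrt{1-q}$ yields at most $Ca/\beta(q)$, which vanishes as $\beta\to\infty$ since $a(\beta)/\beta\to 0$.

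The main obstacle is the third bound: whereas $F_i$ is defined in terms of the independent initial-position variables $p_j(j)$ and so fits Theorem~\ref{thm:generalized_upper_bound_LIS} directly, $E_i$ is defined via the intermediate-time positions $p_{\max B_i}(j)$, which depend in a complicated way on $(p_k(k))_{k\le\max B_i}$. The reversal-inverse trick above circumvents this by converting the problem into one about the longest increasing subsequence of a Mallows permutation on only $a/(1-q)$ consecutive elements, where Theorem~\ref{thm:lis-largedev} applies directly.
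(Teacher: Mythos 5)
Your proof is correct and takes essentially the same route as the paper: the leftover block and the $F_i$ terms are handled exactly as there (Corollary~\ref{cor:induced_permutation_Mallows} with Theorem~\ref{thm:lis-largedev}, and Theorem~\ref{thm:generalized_upper_bound_LIS} integrated over its tail), and your $E_i$ bound is the paper's stochastic-domination argument in different bookkeeping --- you conjugate by $j\mapsto \max B_i+1-p_{\max B_i}(j)$ at the outset and restrict $\rho^{-1}$ to the top block of length $\lfloor a/(1-q)\rfloor$, whereas the paper bounds $\LDS(\sigma_{E_i})\le \LDS((\sigma^{-1})_I)$ on the bottom block and reverses at the end via Lemma~\ref{lem:reversal_inverse_Mallows}. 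The one loose point, which the paper shares, is the leftover block when $|B_{m+1}|(1-q)<4$, where the hypothesis of Theorem~\ref{thm:lis-largedev} fails and Mueller--Starr gives only convergence in probability rather than the $O(\sqrt{k})$ expectation bound you cite; it is patched by embedding $B_{m+1}$, via Corollary~\ref{cor:induced_permutation_Mallows} and monotonicity of $\LIS$ under restriction, into a block of full length $\beta(q)/(1-q)$ and applying Theorem~\ref{thm:lis-largedev} there.
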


\begin{proof}
%

Throughout the proof we assume that $\beta$ is sufficiently large
and $q$ is sufficiently close to $1$ so that $n(1-q)$ is large,
$\beta(q)$ is close to $\beta$, $a$ is large and $\frac{a}{\beta}$
is small.

Recall that $X_{m+1}$ has the distribution of the length of the
longest increasing subsequence of a Mallows permutation of length
$|B_{m+1}|\le \frac{\beta(q)}{1-q}$ and parameter $q$. Hence
Theorem~\ref{thm:lis-largedev} implies that
\[
\E\left( X_{m+1} \right) \le \frac{C\beta(q)}{\sqrt{1-q}}
\]
for some constant $C>0$ independent of $q$ and $\beta$. Thus
\[
\lim_{q \to 1}\; \E\left(\frac{X_{m+1}}{n\sqrt{1-q}} \right) \le
\lim_{q \to 1}\; \frac{C\beta(q)}{n(1-q)} =0
\]
for any fixed $\beta>0$, by our assumption that $\beta(q)\to\beta$
and $n(1-q)\to\infty\,$ as $q$ tends to 1. This establishes
\eqref{eq:leftover_LIS_bound}.

We continue to bound $\E(\LIS(\pi_{E_i}))$. Our goal is to
show that $\LIS(\pi_{E_i})$ is stochastically dominated by the
longest increasing subsequence of a permutation with the $(\lfloor\frac{a}{1-q}\rfloor,
q)$-Mallows distribution. To see this, set
\begin{equation*}
  I:=\left(1,2,\ldots, \left\lfloor\frac{a}{1-q}\right\rfloor\right)\;\;\text{ and }\;\;\bar{E}_i:=(p_{\max B_i})^{-1}(I).
\end{equation*}
It follows that $E_i \subseteq \bar{E}_i$.
Now, denote $\sigma:=p_{\max B_i}$. Then
\begin{equation*}
  \LIS(\pi_{E_i}) = \LDS((p_n)_{E_i}) = \LDS(\sigma_{E_i}) \le
  \LDS(\sigma_{\bar{E}_i}) = \LDS((\sigma^{-1})_{I}) = \LIS(((\sigma^{-1})_I)^R).
\end{equation*}
Since $\sigma^{-1}\sim\mu_{\max B_i,1/q}$ by
Lemma~\ref{lem:reversal_inverse_Mallows}, it follows by
Corollary~\ref{cor:induced_permutation_Mallows} that
$(\sigma^{-1})_I\sim\mu_{\lfloor\frac{a}{1-q}\rfloor,1/q}$. Finally,
another application of Lemma~\ref{lem:reversal_inverse_Mallows}
shows that
$((\sigma^{-1})_I)^R\sim\mu_{\lfloor\frac{a}{1-q}\rfloor,q}$,
proving the required stochastic domination. Applying
Theorem~\ref{thm:lis-largedev} we conclude that
\begin{equation*}
  \E(\LIS(\pi_{E_i}))\le C\left\lfloor\frac{a}{1-q}\right\rfloor\sqrt{1-q} \le C\frac{a}{\sqrt{1-q}}.
\end{equation*}


Thus, recalling the definition of $m$ from
\eqref{eq:m_beta_def}, our assumption that $\beta(q)\to\beta$ as
$q\to 1$ and the properties of $a$ from \eqref{eq:a_function_prop},
we have
\begin{align*}
  &\limsup_{\beta \to \infty}\; \limsup_{q \to 1}\; \E\left[
\frac{\sum_{i=1}^{m} \E(\LIS(\pi_{E_i}))}{n\sqrt{1-q}}
\right] \le \limsup_{\beta \to \infty}\; \limsup_{q \to 1}\;
\frac{C
a m}{n(1-q)} \le\nonumber\\
\le &\limsup_{\beta \to \infty}\; \limsup_{q \to 1}\;
\frac{Ca}{\beta(q)} = \limsup_{\beta \to \infty}\; \frac{Ca}{\beta}
= 0,
\end{align*}
proving \eqref{eq:LIS_E_I_bound}.


We finish by bounding $\E(\LIS(\pi_{F_i}))$. Observe that
$\LIS(\pi_{F_i})$ is of the form studied in
Theorem~\ref{thm:generalized_upper_bound_LIS}. Hence we may apply
this theorem to deduce that for any integer $L\ge
Ce^{-a/2}|B_i|\sqrt{1-q}$,
\begin{equation}\label{eq:generalized_LIS_application}
\p(\LIS(\pi_{F_i})\ge L) \le \frac{1}{|B_i|(1-q)}
\left(\frac{Ce^{-a}|B_i|^2(1-q)}{L^2}\right)^L.
\end{equation}
For each $1\le i\le m$ we may set
\begin{equation*}
  L_0:=C_0 e^{-a/2}|B_i|\sqrt{1-q} = C_0 e^{-a/2}\frac{\beta(q)}{\sqrt{1-q}},
\end{equation*}
with a sufficiently large absolute constant $C_0$, and apply
\eqref{eq:generalized_LIS_application} to obtain
\begin{equation*}
  \E(\LDS(\pi_{F_i})) \le L_0 + \sum_{L>L_0} \p(\LIS(\pi_{F_i})\ge
  L) \le L_0 + \sum_{L>L_0} \frac{1}{|B_i|(1-q)}2^{-L} = L_0 + \sum_{L>L_0}
  \frac{1}{\beta(q)}2^{-L} \le L_0 + 1.
\end{equation*}
Finally, we conclude that
\begin{align*}
  &\limsup_{\beta \to \infty}\; \limsup_{q \to 1}\; \E\left[
\frac{\sum_{i=1}^{m} \E(\LIS(\pi_{F_i}))}{n\sqrt{1-q}} \right] \le
\limsup_{\beta \to \infty}\; \limsup_{q \to 1}\; \frac{m(L_0+1)}{n\sqrt{1-q}} \le\nonumber\\
\le &\limsup_{\beta \to \infty}\; \limsup_{q \to 1}\; C_0 e^{-a/2} +
\frac{\sqrt{1-q}}{\beta(q)} = \limsup_{\beta \to \infty}\; C_0
e^{-a/2} = 0,
\end{align*}
proving \eqref{eq:LIS_F_i_bound}.
\end{proof}

\subsection{Relating to the results of Mueller and Starr}
In this section we establish \eqref{eq:Mueller-Starr}. We rely on
the following result of Mueller and Starr, who proved a weak law of
large numbers for the longest increasing subsequence of a random Mallows permutation in the
regime that $n(1-q)$ tends to a finite limit.
\begin{theorem}[Mueller-Starr \cite{MueSta11}]\label{thm:MS-wlln}
Suppose that $(q_n)_{n=1}^\infty$ satisfies that the limit
\[\beta = \lim_{n \to \infty}\; n(1-q_n)\]
exists and is finite. Then for any $\varepsilon>0$, if $\pi\sim
\mu_{n,q_n}$ then
\begin{align}
\lim_{n \to \infty}\; \p\left(\left| \frac{\LIS(\pi)}{\sqrt{n}} -
    \ell(\beta) \right| > \varepsilon \right) =0, \nonumber
\end{align}
where\begin{align}\label{eq:ell_beta_def} \ell(\beta) =
\begin{cases}
2 \beta^{-1/2}\sinh^{-1}(\sqrt{e^\beta-1}) & \mathrm{for }\ \beta>0 \\
2 &  \mathrm{for }\ \beta=0 \\
2 |\beta|^{-1/2}\sin^{-1}(\sqrt{1-e^\beta}) & \mathrm{for }\ \beta<0
\end{cases}.
\end{align}
\end{theorem}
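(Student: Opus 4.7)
The plan is to prove this weak law via a two-step strategy mirroring the Logan-Shepp/Vershik-Kerov program for uniform permutations: first, establish a deterministic limiting \emph{permuton} for the graphical representation of a Mallows permutation, and second, extract the $\LIS$ asymptotics from that density through a Deuschel-Zeitouni-type variational principle.

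For the first step I would exploit the Boltzmann-Gibbs structure of $\mu_{n,q}$ as a Gibbs measure with Hamiltonian $\inv(\pi)$ at inverse temperature $\log(1/q)\sim 1-q\sim \beta/n$. Encoding $\pi$ by its empirical graphical measure $\nu_\pi := \frac{1}{n}\sum_{i=1}^n \delta_{(i/n,\,\pi(i)/n)}$ on $[0,1]^2$, and using the explicit partition function \eqref{eq:Z_formula} to control entropy, one derives a large deviation principle at speed $n$ over the space of doubly-stochastic densities, with an explicit (energy minus entropy) rate functional. Calculus of variations then identifies a unique minimizer $u_\beta(x,y)$ in closed form, as in Starr \cite{Sta09}; for $\beta>0$ the density is concentrated along the diagonal with a width of order $1/\sqrt{\beta}$ in the rescaled coordinates, while for $\beta<0$ it is concentrated along the anti-diagonal. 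The large deviation principle then yields $\nu_\pi \to u_\beta\, dx\,dy$ weakly in probability.

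For the second step, with $u_\beta$ in hand, I would argue that in a small $\sqrt{n}\times\sqrt{n}$ window around $(x,y)\in[0,1]^2$ the graphical representation behaves like roughly $n u_\beta(x,y)$ approximately i.i.d.\ uniform points; this uses the exact translation invariance of Lemma \ref{lem:shift} and the independence of far-apart induced orderings from Lemma \ref{lem:indep-seqs}. Concatenating the local $\LIS$ contributions along a non-decreasing curve $\gamma=(\gamma_1,\gamma_2):[0,1]\to[0,1]^2$ from $(0,0)$ to $(1,1)$, the Deuschel-Zeitouni theorem \cite{DeuZei95} (applied locally) then gives
\[
\frac{\LIS(\pi)}{\sqrt{n}}\ \xrightarrow{p}\ J(u_\beta) := \sup_{\gamma} \int_0^1 2\sqrt{u_\beta(\gamma(t))\,\gamma_1'(t)\gamma_2'(t)}\,dt.
\]
The symmetries of $u_\beta$ force the optimal $\gamma$ to be a reparametrization of the diagonal, and an Euler-Lagrange calculation then produces the explicit closed form \eqref{eq:ell_beta_def}; the continuous extension at $\beta=0$ recovers the classical Vershik-Kerov-Logan-Shepp constant $\ell(0)=2$, and the case $\beta<0$ is handled symmetrically.

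The main obstacle is not the variational calculation itself but the transfer from the global permuton limit to a local i.i.d.\ picture on which Deuschel-Zeitouni applies verbatim. The Mallows measure has genuine long-range dependence through the global statistic $\inv(\pi)$, so one must quantify the near-independence of points in disjoint sub-windows of area $\sim 1/n$ and show that, conditional on the macroscopic density being close to $u_\beta$, the microscopic distribution inside such a window is close to uniform. Mueller and Starr accomplish this via careful entropy and interchange arguments combined with a concentration inequality for $\LIS$ under the Mallows law, analogous in spirit to, though stronger than, the Gaussian tail bound in Proposition \ref{prop:var_prop}.
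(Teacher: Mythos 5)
A point of reference first: the paper does not prove Theorem~\ref{thm:MS-wlln} at all --- it is quoted as an external input from Mueller and Starr \cite{MueSta11} and used as a black box in Section~\ref{sec:wlln}. So there is no internal proof to compare against; the relevant comparison is with the strategy the paper itself attributes to the original sources, namely Starr's Boltzmann--Gibbs formulation and limiting density for the graphical representation \cite{Sta09}, followed by Deuschel--Zeitouni-type variational arguments \cite{DeuZei95}. Your outline does track that strategy.

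As a proof, however, your text has genuine gaps: it is a program, not an argument. The two decisive steps are asserted rather than carried out. (i) You neither derive the limiting density $u_\beta$ nor solve the variational problem: the Euler--Lagrange computation that is supposed to produce the explicit constant $\ell(\beta)$ of \eqref{eq:ell_beta_def} is precisely the content of the theorem and is nowhere performed, so the statement cannot be verified even granting the density limit. (Your qualitative description is also off: in rescaled coordinates the density decays away from the diagonal at scale $1/\beta$, consistent with the displacement bound $q^t$ of Theorem~\ref{thm:displacement}, not $1/\sqrt{\beta}$; and a ``$\sqrt n\times\sqrt n$ window'' contains at most $\sqrt n$ points of the permutation, not $n\,u_\beta(x,y)$ of them, so the local bookkeeping as written does not scale --- the Deuschel--Zeitouni scheme needs mesoscopic boxes of rescaled side $1/k$ with $1\ll k\ll n$.) (ii) More seriously, the transfer from the global density limit to a local approximately-i.i.d.\ picture --- the one place where the correlations of the Mallows measure must actually be controlled, since \cite{DeuZei95} is stated for i.i.d.\ samples and cannot be applied verbatim --- is resolved by appealing to what ``Mueller and Starr accomplish via careful entropy and interchange arguments.'' In a blind proof this is circular: it defers the key step to the authors of the theorem being proved. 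Likewise, you correctly note that a concentration input stronger than Proposition~\ref{prop:var_prop} is required (that bound only controls fluctuations at scale $\sqrt{n}$, which is the same order as $\LIS(\pi)$ itself in this regime, hence insufficient to upgrade expectation asymptotics to convergence in probability), but you do not supply one. As it stands, the proposal identifies the right roadmap but does not prove the theorem.
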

We continue with the notation of
section~\ref{sec:block_decomposition} and, in particular, suppose
that $n=n(q)$ is such that \eqref{eq:n_q_limit} holds. Recall that
$X_1$ is distributed as the length of a longest increasing
subsequence of a $(\beta(q)/(1-q),q)$-Mallows permutation. Since the
limit
\[
\lim_{q \to 1}\; \frac{\beta(q)}{1-q}\cdot(1-q) = \beta
\]
exists and is finite, we may apply Theorem \ref{thm:MS-wlln} to
$X_1$ and deduce that
\begin{equation}\label{eq:Mueller_Starr_conclusion}
  \sqrt{\frac{1-q}{\beta(q)}}\cdot X_1
  \to \ell(\beta)\quad \text{ in probability, as $q$ tends to $1$.}
\end{equation}
Now fix $\beta_0$ sufficiently large and $q_0$ sufficiently close to
$1$ so that if $\beta\ge \beta_0$ and $q_0\le q<1$ then
$\frac{1}{2}<q<1 - \frac{4(1-q)}{\beta(q)}$ so that our large
deviation estimate, inequality \eqref{eq:LIS_greater_than_L} in
Theorem~\ref{thm:lis-largedev}, may be applied to $X_1$. It
follows, as in \eqref{eq:LIS_uniform_integrability}, that for any
fixed $\beta\ge\beta_0$, the random variables
\begin{equation}\label{eq:X_1_beta_uniform_integrability}
  \left\{\left(\frac{\sqrt{1-q}}{\beta(q)}\cdot X_1\right)^2\right\}\text{
  indexed by $q_0\le q<1$ are uniformly integrable}.
\end{equation}
Since $\beta(q)\to\beta$ as $q\to 1$,
\eqref{eq:Mueller_Starr_conclusion} and
\eqref{eq:X_1_beta_uniform_integrability} imply that for any fixed
$\beta\ge \beta_0$,
\begin{equation*}
  \sqrt{\frac{1-q}{\beta}}\cdot X_1
  \to \ell(\beta)\quad \text{ in $L_2$, as $q$ tends to $1$.}
\end{equation*}
In particular, for any fixed $\beta\ge \beta_0$, we have
\begin{equation}\label{eq:X_1_beta_expectation_and_variance}
  \lim_{q\to 1} \sqrt{\frac{1-q}{\beta}}\cdot \E(X_1) =
  \ell(\beta)\quad\text{and}\quad \lim_{q\to 1}
  (1-q)\cdot\var(X_1) = 0.
\end{equation}

We now consider the random variable
\begin{equation*}
  Y:=\frac{\sum_{i=1}^{m}X_i}{n\sqrt{1-q}}.
\end{equation*}
In order to prove \eqref{eq:Mueller-Starr} we first show that
\begin{align}
  &\lim_{\beta\to \infty}\; \lim_{q\to 1}\; \E(Y) = 1\quad\text{
  and}\label{eq:Y_exp}\\
  &\lim_{\beta\to \infty}\; \lim_{q\to 1}\; \var(Y) = 0.\label{eq:Y_var}
\end{align}
To prove \eqref{eq:Y_exp} we note that since the $(X_i)$ are
identically distributed, we may write
\begin{equation}\label{eq:Y_exp_rewriting}
  \lim_{q\to 1}\; \E(Y) = \lim_{q\to 1}\; \frac{m}{n\sqrt{1-q}}
  \E(X_1) = \lim_{q\to 1}\; \frac{m \beta}{n(1-q)} \frac{\sqrt{1-q}}{\beta}\cdot
  \E(X_1).
\end{equation}
Now, by \eqref{eq:n_q_limit} and \eqref{eq:m_beta_def} we have
\begin{equation}\label{eq:m_beta_ratio_limit}
  \lim_{q\to 1} \frac{m \beta}{n(1-q)} = 1.
\end{equation}
Plugging this into \eqref{eq:Y_exp_rewriting} and using
\eqref{eq:X_1_beta_expectation_and_variance} implies that
\begin{equation}\label{eq:Y_exp_q_limit_final}
  \lim_{q\to 1}\; \E(Y) = \frac{1}{\sqrt{\beta}} \lim_{q\to 1}\; \sqrt{\frac{1-q}{\beta}}\cdot
  \E(X_1) = \frac{\ell(\beta)}{\sqrt{\beta}}
\end{equation}
for any fixed $\beta\ge \beta_0$. Finally, we observe that by
\eqref{eq:ell_beta_def} we have
\begin{equation*}
  \lim_{\beta\to\infty} \frac{\ell(\beta)}{\sqrt{\beta}} = 1,
\end{equation*}
which together with \eqref{eq:Y_exp_q_limit_final} implies
\eqref{eq:Y_exp}.

To prove \eqref{eq:Y_var} we rely also on the fact that the $(X_i)$
are independent. Thus, by \eqref{eq:m_beta_ratio_limit},
\begin{equation*}
  \lim_{q\to 1}\; \var(Y) = \lim_{q\to 1}\; \frac{m}{n^2(1-q)}
  \var(X_1) = \lim_{q\to 1}\; \frac{1}{\beta n}
  \var(X_1) = \lim_{q\to 1}\; \frac{1}{\beta n(1-q)} (1-q)\cdot
  \var(X_1).
\end{equation*}
Hence, if $\beta\ge \beta_0$ then \eqref{eq:n_q_limit} and
\eqref{eq:X_1_beta_expectation_and_variance} imply that
\begin{equation*}
  \lim_{q\to 1}\; \var(Y) = 0,
\end{equation*}
proving \eqref{eq:Y_var}.

Finally, by the triangle and Cauchy-Schwartz inequalities we have
\begin{equation*}
  \E|Y - 1|\le \E|Y - \E(Y)| + |\E(Y) - 1| \le \sqrt{\var(Y)} + |\E(Y) -
  1|,
\end{equation*}
which shows that \eqref{eq:Y_exp} and \eqref{eq:Y_var} imply
\eqref{eq:Mueller-Starr}.

\section{Decreasing subsequences}\label{sec:lds}
In this section we prove Theorems~\ref{thm:E(LDS)} and
\ref{thm:lds-largedev} concerning the length of the longest
decreasing subsequence in a Mallows permutation.
Part~\eqref{part:LDS_large_dev_upperbound} of
Theorem~\ref{thm:lds-largedev} is established in
Section~\ref{sec:lds-large-dev}. In Section \ref{sec:lds_lb}
we prove part \eqref{part:LDS_greater_than_L_lb} of Theorem \ref{thm:lds-largedev} and in Section \ref{sec:erdos-szekeres} we prove part \eqref{part:lds-less-than-ub}. In Section \ref{sec:E(LDS)} using the established large deviation inequalities for $\LDS(\pi)$ we derive the different regimes of the order of magnitude of $E(\LDS(\pi))$ proving Theorem~\ref{thm:E(LDS)}. This last section also includes the proof of Proposition \ref{prop:identity_for_small_q}.

\subsection{An upper bound on the probability of a long decreasing subsequence}\label{sec:lds-large-dev} In this section
we obtain an upper bound on the probability of having a long
decreasing subsequence in a Mallows permutation. Precisely, we show
that if $\pi\sim\mu_{n,q}$ for $0<q<1-\frac{2}{n}$ then
\begin{equation}\label{eq:LDS_upper_bound_in_sec}
  \P(\LDS(\pi) \ge L) \le
  n^8\begin{cases}\left(\frac{C}{(1-q)L^2}\right)^L&
  L\le\frac{3}{1-q}\\(C(1-q))^L
  q^{\frac{L(L-1)}{2}}&L>\frac{3}{1-q}\end{cases}
\end{equation}
for any $L\ge 2$. This establishes \eqref{eq:LDS_greater_than_L}. We
also establish \eqref{eq:LDS_greater_than_L_refined}, a more refined
result for small $q$, showing that for $0<q<\frac{1}{2}$ and $L\ge
2$,
\begin{equation}\label{eq:LDS_refined_upper_bound_in_sec}
  \P(\LDS(\pi)\ge L)\le nC^L q^{\frac{L(L-1)}{2}}.
\end{equation}
The method of proof, as in
Section~\ref{sec:upper_bound_prob_long_LIS}, is to first bound the
probability that a particular set of inputs to the
permutation forms a decreasing subsequence of length $L$ and then to
perform a union bound over all the possibilities for such inputs. However, the calculations turn out to be somewhat involved.

\subsubsection{Preliminary Calculations}
We begin with some preliminary calculations.
\begin{lemma}\label{lem:sum_bin_prob}
For any $0<p<1$ and integer $r\ge 1$, if we denote by $X(d)$ a
random variable with distribution $\bin(d,p)$ then
\begin{equation*}
  \sum_{d=0}^\infty \P(X(d)<r) = \frac{r}{p}.
\end{equation*}
\end{lemma}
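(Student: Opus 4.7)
The plan is to interpret the sum probabilistically as the expectation of a waiting time. Let $Y_1, Y_2, \ldots$ be i.i.d.\ Bernoulli$(p)$ random variables and let
\[
T_r := \min\Bigl\{n \ge r : Y_1 + \cdots + Y_n = r\Bigr\}
\]
be the time of the $r$-th success. Then $T_r$ has a negative binomial distribution, and it is a standard fact that $\E(T_r) = r/p$ (the $r$-th success time is the sum of $r$ independent geometric$(p)$ random variables, each with mean $1/p$).

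Next, I would observe that $\{T_r > d\}$ is exactly the event that fewer than $r$ successes occur in the first $d$ trials, so $\P(T_r > d) = \P(X(d) < r)$ with the convention $X(0) = 0$ (so $\P(X(0) < r) = 1$ for $r \ge 1$).

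Finally, since $T_r$ is a nonnegative integer-valued random variable, the tail-sum formula gives
\[
\frac{r}{p} \;=\; \E(T_r) \;=\; \sum_{d=0}^\infty \P(T_r > d) \;=\; \sum_{d=0}^\infty \P(X(d) < r),
\]
which is exactly the claimed identity. There is essentially no obstacle here; the only thing to be careful about is the bookkeeping at $d = 0$ (and the fact that $\E(T_r)$ is finite, which follows from $p > 0$, justifies rearrangement of the telescoping sum). An alternative would be to verify the identity by induction on $r$, using $\P(X(d) < r) - \P(X(d) < r-1) = \P(X(d) = r-1)$ and the fact that $\sum_{d \ge r-1} \binom{d}{r-1} p^{r-1}(1-p)^{d-r+1} = 1/p$, but the probabilistic interpretation above is cleaner.
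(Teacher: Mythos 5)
Your proof is correct and follows essentially the same route as the paper: both interpret $\sum_d \P(X(d)<r)$ as the expected waiting time for the $r$-th success in i.i.d.\ Bernoulli$(p)$ trials (a sum of $r$ independent geometrics), the paper doing so by exchanging expectation and sum of indicators while you invoke the tail-sum formula for $T_r$. No gap.
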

In this lemma, as well as below, we say that $X$ has the
$\bin(0,p)$ distribution meaning that $X$ is the identically zero
random variable.
\begin{proof}
  Let $Y_1,Y_2,\ldots$ be an infinite sequence of independent
  Bernoulli$(p)$ random variables, i.e., $\P(Y_1=1)=1-\P(Y_1=0)=p$.
  Then
  \begin{equation*}
    \sum_{d=0}^\infty \P(X(d)<r) = \sum_{d=0}^\infty \E\left(
    {\mathbbm 1}_{\{\sum_{k=1}^d Y_i < r\}}\right) = \E \sum_{d=0}^\infty \left({\mathbbm 1}_{\{\sum_{k=1}^d Y_i <
    r\}}\right) = \E \bigg[\min\bigg(m\,:\, \sum_{k=1}^{m} Y_i = r\bigg)\bigg],
  \end{equation*}
  where ${\mathbbm 1}_E$ denotes the indicator random variable of the
  event $E$. Observing that $\min(m\,:\, \sum_{k=1}^{m} Y_i = r)$ has the
  distribution of the waiting time for $r$ successes in a sequence
  of independent trials with success probability $p$, that is,
  the distribution of a sum of $r$ independent geometric random variables with
  success probability $p$, we conclude that
\begin{equation*}
  \sum_{d=0}^\infty \P(X(d)<r) = \frac{r}{p}.\qedhere
\end{equation*}
\end{proof}

For integers $m\ge 1$ and $M\ge m$ define the set of integer vectors
\begin{equation*}
J_m(M):=\{(j_1,\ldots, j_{m})\,:\,0\le j_1<
    j_2<\cdots<j_m<M\}.
\end{equation*}
\begin{lemma}\label{lem:ind_calc}
  There exists an absolute constant $C_1>0$ such that for any integers $m\ge 2$ and $j_m\ge m-1$ we have
  \begin{equation*}
    \sum_{J_{m-1}(j_m)} \prod_{k=1}^{m-1} \frac{j_{k+1} - j_k}{j_k+1} \le
    \log(j_m+1)\left(\frac{C_1 j_m}{(m-1)^2}\right)^{m-1},
  \end{equation*}
\end{lemma}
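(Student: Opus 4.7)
The plan is to induct on $m$. Summing over the last index $j_{m-1}$ in the definition gives the recurrence
\begin{equation*}
S_{m-1}(j_m) := \sum_{J_{m-1}(j_m)} \prod_{k=1}^{m-1} \frac{j_{k+1}-j_k}{j_k+1} = \sum_{j=m-2}^{j_m-1} \frac{j_m-j}{j+1}\, S_{m-2}(j),
\end{equation*}
with the convention $S_0 \equiv 1$. For the base case $m=2$, a direct calculation yields $S_1(j_m) = (j_m+1)H_{j_m} - j_m$, where $H_n$ is the $n$-th harmonic number; combined with $H_n \le 2\log(n+1)$ for $n\ge 1$, this gives $S_1(j_m) \le 4 j_m \log(j_m+1)$, so the statement holds for any $C_1 \ge 4$.

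For the inductive step, I would substitute the inductive hypothesis $S_{m-2}(j) \le \log(j+1)(C_1 j/(m-2)^2)^{m-2}$ into the recurrence, apply the elementary bounds $\log(j+1) \le \log(j_m+1)$ and $j^{m-2}/(j+1) \le j^{m-3}$ (valid for $m\ge 3$ and $j\ge 1$) inside the sum, and reduce the problem to estimating $T_m(j_m) := \sum_{j=0}^{j_m-1}(j_m-j)j^{m-3}$. Comparing this sum with the integral $\int_0^{j_m}(j_m-x)x^{m-3}\,dx = \frac{j_m^{m-1}}{(m-1)(m-2)}$, with error controlled by the peak of the unimodal integrand (which is of order $j_m^{m-2}/(m-2)$), yields $T_m(j_m) \le \frac{2 j_m^{m-1}}{(m-1)(m-2)}$ whenever $j_m \ge m-1$.

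The main obstacle---and the key technical point---is verifying that the constant $C_1$ can be chosen once and for all, independently of $m$. Collecting factors, the inductive step closes provided
\begin{equation*}
C_1 \ge \frac{2(m-1)^{2m-3}}{(m-2)^{2m-3}} = 2\left(1+\frac{1}{m-2}\right)^{2m-3}.
\end{equation*}
The right-hand side decreases monotonically in $m\ge 3$ with limit $2e^2$, so it is bounded by an absolute constant (for instance, by $16$). Hence a single sufficiently large absolute constant $C_1$ propagates through every inductive step and completes the proof.
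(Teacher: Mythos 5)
Your proof is correct and follows essentially the same route as the paper's: induction on $m$ via the same recurrence over the last index $j_{m-1}$, the same bounds $\log(j_{m-1}+1)\le\log(j_m+1)$ and $j^{m-2}/(j+1)\le j^{m-3}$, an integral comparison for the unimodal sum $\sum_j (j_m-j)j^{m-3}$, and the observation that $2\left(1+\tfrac{1}{m-2}\right)^{2m-3}$ is bounded so a single absolute $C_1$ propagates. The only differences are cosmetic: you make the base case and the uniformity of $C_1$ explicit, and your integral-plus-peak estimate (using $j_m\ge m-1$) gives constant $2$ where the paper's monotone-majorant argument gives $4$.
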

In this lemma, as well as below, we write $\sum_{J_{m-1}(j_m)}$ as a
shorthand for $\sum_{(j_1,\ldots, j_{m-1})\in J_{m-1}(j_m)}$.
\begin{proof}
  We prove the claim by induction. For $m=2$ the claim is
  \begin{equation*}
    \sum_{j_1=0}^{j_2-1} \frac{j_2 - j_1}{j_1+1} \le C_1 j_2\log(j_2+1)
  \end{equation*}
  for any $j_2\ge 1$, which clearly holds if $C_1$ is sufficiently
  large. Now fix $m\ge 3$ and $j_m\ge m-1$, assume the claim holds for $m-1$ (and any $j_{m-1}\ge m-2$), and let
  us prove it for $m$. We have
  \begin{align*}
    \sum_{J_{m-1}(j_m)} &\prod_{k=1}^{m-1} \frac{j_{k+1} - j_k}{j_k+1}= \sum_{j_{m-1}=m-2}^{j_m-1} \left[\frac{j_m -
    j_{m-1}}{j_{m-1}+1} \sum_{J_{m-2}(j_{m-1})} \prod_{k=1}^{m-2} \frac{j_{k+1} -
    j_k}{j_k+1}\right] \le\\
    &\le \sum_{j_{m-1}=m-2}^{j_m-1} \frac{j_m -
    j_{m-1}}{j_{m-1}+1} \log(j_{m-1}+1)\left(\frac{C_1 j_{m-1}}{(m-2)^2}\right)^{m-2}
  \end{align*}
  by the induction hypothesis. It follows that
  \begin{equation}\label{eq:ind_step}
    \sum_{J_{m-1}(j_m)} \prod_{k=1}^{m-1} \frac{j_{k+1} - j_k}{j_k+1} \le
    \log(j_m+1)\left(\frac{C_1}{(m-2)^2}\right)^{m-2}
    \sum_{j_{m-1}=m-2}^{j_m-1} j_{m-1}^{m-3}(j_m-j_{m-1}).
  \end{equation}
  We have $\sum_{j_{m-1}=m-2}^{j_m-1}
  j_{m-1}^{m-3}(j_m-j_{m-1})\le \frac{4j_m^{m-1}}{(m-2)(m-1)}$. One
  way to see this is to let $f(x):=x^{m-3}(j_m-x)$ and
  $x_c:=\frac{(m-3)j_m}{m-2}$. Observing that $f$ attains its
  maximum on $[0,j_m]$ at $x_c$, we have $f(x)\le g(x)$ where
  $g(x):=f(x)$ for $x<x_c$ and $g(x):=f(x_c)$ for $x\ge x_c$. Now,
  since $g$ is increasing on $[0,j_m]$, we have $\sum_{j=1}^{j_m-1}
  f(j)\le \int_1^{j_m} g(x)dx$ which yields the required inequality.
  Thus, continuing \eqref{eq:ind_step}, we have
  \begin{align*}
    \sum_{J_{m-1}(j_m)} &\prod_{k=1}^{m-1} \frac{j_{k+1} - j_k}{j_k+1}\le
    4\log(j_m+1)\left(\frac{C_1}{(m-2)^2}\right)^{m-2}\frac{j_m^{m-1}}{(m-2)(m-1)}=\\
    &=
    \frac{4\log(j_m+1)}{C_1}\left(\frac{m-1}{m-2}\right)^{2m-3}
    \left(\frac{C_1j_m}{(m-1)^2}\right)^{m-1},
  \end{align*}
  from which the induction step follows if $C_1$ is sufficiently
  large.
\end{proof}

\begin{corollary}\label{cor:sum_ind}
  There exists an absolute constant $C>0$ such that for any integers $m\ge 2$ and $M\ge m$ we have
  \begin{equation*}
    \sum_{J_m(M)} \prod_{k=1}^{m-1} \frac{j_{k+1} - j_k}{j_k+1} \le
    \log(M+1)\left(\frac{CM}{m^2}\right)^m.
  \end{equation*}
\end{corollary}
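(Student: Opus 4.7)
The plan is to deduce the corollary from Lemma~\ref{lem:ind_calc} by isolating the largest index $j_m$ and summing the lemma's bound over the admissible range of $j_m$. Concretely, I would write
\begin{equation*}
\sum_{J_m(M)} \prod_{k=1}^{m-1} \frac{j_{k+1}-j_k}{j_k+1} \;=\; \sum_{j_m=m-1}^{M-1} \;\sum_{(j_1,\ldots,j_{m-1}) \in J_{m-1}(j_m)} \prod_{k=1}^{m-1} \frac{j_{k+1}-j_k}{j_k+1},
\end{equation*}
and observe that the inner sum is precisely the quantity estimated in Lemma~\ref{lem:ind_calc}, so it is at most $\log(j_m+1)\bigl(C_1 j_m/(m-1)^2\bigr)^{m-1}$. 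Note that the hypothesis $j_m \ge m-1$ needed for the lemma is exactly the range of summation, and $M\ge m$ ensures the outer sum is non-empty.

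Next I would bound $\log(j_m+1) \le \log(M+1)$ uniformly and estimate the remaining sum by comparison with an integral, using that $x \mapsto x^{m-1}$ is increasing to get $\sum_{j=0}^{M-1} j^{m-1} \le \int_0^M x^{m-1}\,dx = M^m/m$. Combining these gives
\begin{equation*}
\sum_{J_m(M)} \prod_{k=1}^{m-1} \frac{j_{k+1}-j_k}{j_k+1} \;\le\; \log(M+1) \cdot \frac{C_1^{m-1} M^m}{m\,(m-1)^{2(m-1)}}.
\end{equation*}

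The only remaining task is to massage the prefactor into the form $(CM/m^2)^m$. Writing the desired bound as $\log(M+1) \cdot C^m M^m / m^{2m}$, this reduces to verifying that $m^{2m-1}/(m-1)^{2m-2} \cdot C_1^{m-1} \le C^m$ for some absolute $C$. The key algebraic estimate is
\begin{equation*}
\frac{m^{2m-1}}{(m-1)^{2m-2}} \;=\; m\left(1 + \frac{1}{m-1}\right)^{2m-2} \;\le\; e^2 m \qquad (m \ge 2),
\end{equation*}
after which $e^2 m\, C_1^{m-1} \le C^m$ holds for any $C$ large enough (for instance $C = 3C_1$, using $e^2 m \le C^m / C_1^{m-1}$ for $m\ge 2$). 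I do not anticipate any real obstacle: all of the substance is in Lemma~\ref{lem:ind_calc}, and what remains is routine bookkeeping with the constants.
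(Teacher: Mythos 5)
Your proof is correct and follows essentially the same route as the paper: split off the largest index $j_m$, apply Lemma~\ref{lem:ind_calc} to the inner sum, bound $\log(j_m+1)\le\log(M+1)$, and bound $\sum j_m^{m-1}\le M^m/m$. The only difference is that you spell out the final constant absorption (the $m^{2m-1}/(m-1)^{2m-2}\le e^2 m$ estimate), which the paper leaves implicit with "for some $C>C_1$."
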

\begin{proof}
  By Lemma~\ref{lem:ind_calc} we have
  \begin{align*}
    \sum_{J_m(M)} \prod_{k=1}^{m-1} \frac{j_{k+1} - j_k}{j_k+1}&\le
    \sum_{j_m=m-1}^{M-1} \log(j_m+1)\left(\frac{C_1
    j_m}{(m-1)^2}\right)^{m-1}\le\\
    &\le
    \log(M+1)\left(\frac{C_1}{(m-1)^2}\right)^{m-1}\frac{M^m}{m},
  \end{align*}
  from which the corollary follows for some $C>C_1$.
\end{proof}
For integers $m\ge 1$ and $M\ge 0$ define the (infinite) set of
integer vectors
\begin{equation*}
J_m'(M):=\{(j_1,\ldots, j_{m})\,:\,M\le j_1<
    j_2<\cdots<j_m\}.
\end{equation*}
\begin{lemma}\label{lem:second_j_bound}
  There exists an absolute constant $C>0$ such that for any $0<q<1$ and integers $m\ge 2$ and $M\ge 0$ we have
  \begin{equation*}
    \sum_{J_m'(M)} q^{\sum_{k=1}^m j_k}\prod_{k=1}^{m-1} (j_{k+1} - j_k)
    = \frac{q^{\frac{m(m-1)}{2} + mM}}{(1-q^m)\prod_{k=1}^{m-1}
    (1-q^{m-k})^2} \le \frac{C^m q^{\frac{m(m-1)}{2} +
    mM}}{(m'(1-q))^{2m'}},
  \end{equation*}
  where $m':=\min(m,\lfloor\frac{1}{1-q}\rfloor)$.
\end{lemma}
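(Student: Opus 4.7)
The proof splits naturally into two stages: first deriving the closed-form identity, then bounding the denominator in terms of $m'(1-q)$.

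\textbf{Stage 1 (closed form).} I would change variables: set $d_0 := j_1 - M$ and $d_k := j_{k+1} - j_k$ for $1 \le k \le m-1$. The constraint $(j_1,\ldots,j_m) \in J_m'(M)$ becomes $d_0 \ge 0$ and $d_1,\ldots,d_{m-1} \ge 1$, with $j_k = M + d_0 + d_1 + \cdots + d_{k-1}$. A quick calculation gives
\[
\sum_{k=1}^m j_k = mM + \sum_{k=0}^{m-1}(m-k)\,d_k,
\]
so the sum factorizes as
\[
q^{mM}\Bigl(\sum_{d_0\ge 0} q^{md_0}\Bigr)\prod_{k=1}^{m-1}\Bigl(\sum_{d_k\ge 1} d_k q^{(m-k)d_k}\Bigr).
\]
Applying the elementary identities $\sum_{d\ge 0} x^d = 1/(1-x)$ and $\sum_{d\ge 1} d x^d = x/(1-x)^2$, then collecting the exponents of $q$ (the $q^{m-k}$ factors sum to $q^{m(m-1)/2}$), yields the stated equality.

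\textbf{Stage 2 (denominator bound).} The denominator on the RHS of the identity is $D := (1-q^m)\prod_{j=1}^{m-1}(1-q^j)^2$. I would rely on the elementary two-regime estimate
\[
1 - q^j \;\ge\; c\,\min\bigl(j(1-q),\,1\bigr)
\]
for some absolute $c>0$. This follows from $1-q^j = (1-q)(1+q+\cdots+q^{j-1})$ together with the standard bound $q^{1/(1-q)} \le 1/e$, which guarantees $q^{j-1}$ is bounded below by an absolute constant whenever $j \le 1/(1-q)$, and $1-q^j \ge 1 - 1/e$ whenever $j \ge 1/(1-q)$. I then split according to the definition of $m'$:
\begin{itemize}
\item If $m \le \lfloor 1/(1-q)\rfloor$ (so $m'=m$), then every $j \in \{1,\ldots,m\}$ satisfies $j(1-q) \le 1$, giving $D \ge c^{2m-1} m \cdot ((m-1)!)^2 (1-q)^{2m-1}$. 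Stirling's bound $(m-1)! \ge ((m-1)/e)^{m-1}$ converts this to $D \ge c^m (m(1-q))^{2m-1}$, and since $m(1-q)\le 1$ in this case, we further have $(m(1-q))^{2m-1} \ge (m(1-q))^{2m}$, i.e., $D \ge c^m (m'(1-q))^{2m'}$.
\item If $m > \lfloor 1/(1-q)\rfloor$ (so $m' = \lfloor 1/(1-q)\rfloor$ and $m \ge m'+1$), I split the product at $j=m'$: the $j \le m'$ factors contribute at least $(cj(1-q))^2$, while the $m' < j \le m$ factors each contribute an absolute constant. This yields $D \ge c^{2m}(m'!)^2(1-q)^{2m'}$, and Stirling again gives $D \ge c^m(m'(1-q))^{2m'}$.
\end{itemize}
Inverting gives the claimed $C^m/(m'(1-q))^{2m'}$ bound, and combining with the closed form from Stage 1 completes the proof.

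\textbf{Main obstacle.} Stage 1 is purely mechanical once the substitution is chosen, so the only delicate point is threading the correct constants through Stage 2. In particular, the natural bound in Case A produces the exponent $2m-1$ rather than $2m$ on $(1-q)$, and one must observe that the very hypothesis defining Case A ($m(1-q)\le 1$) is exactly what upgrades this to the claimed $(m'(1-q))^{2m'}$. The matching between the two cases at the boundary $m = \lfloor 1/(1-q)\rfloor$ also requires a little care but introduces no new ideas.
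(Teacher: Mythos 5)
Your proof is correct and follows essentially the same route as the paper: the identical change of variables $d_k=j_{k+1}-j_k$ factorizes the sum into geometric-type series for the closed form, and the inequality rests on the same two-regime estimate $1-q^j\ge c\min\bigl(j(1-q),1\bigr)$ together with Stirling, which the paper packages as a single bound $\prod_{k=1}^m(1-q^k)\ge c^m (m')!\,(1-q)^{m'}$ instead of your two cases. One small slip in your justification: the bound $q^{1/(1-q)}\le 1/e$ handles the regime $j\ge 1/(1-q)$ but does not lower-bound $q^{j-1}$ for $j\le 1/(1-q)$; there you need $q^{1/(1-q)}\ge 1/4$ for $q\ge\frac12$ (and for $q<\frac12$ the constraint forces $j=1$), after which everything goes through.
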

\begin{proof}
  We change variables, transforming the vector $I=(j_1,\ldots, j_m)$ to the vector $(j_1,d_1,\ldots,
d_{m-1})$ via the mapping $d_k := j_{k+1}-j_k$. Observing that this
transformation is one-to-one, we have
\begin{equation*}
  \sum_{J_m'(M)} q^{\sum_{k=1}^m j_k}\prod_{k=1}^{m-1} (j_{k+1} - j_k)= \sum_{j_1=M}^\infty \sum_D
  q^{mj_1 +\sum_{k=1}^{m-1} (m-k)d_k}\prod_{k=1}^{m-1} d_k,
\end{equation*}
where the sum is over all integer vectors $D:=\{(d_1,\ldots,
d_{m-1})\ :\ d_k\ge 1\}$. Observing that the sum of products equals
a product of sums since the factors involve different $d_k$'s, we
have
\begin{align*}
  \sum_{J_m'(M)} q^{\sum_{k=1}^m j_k}\prod_{k=1}^{m-1} (j_{k+1} -
  j_k) &= \left(\sum_{j_1=M}^\infty q^{mj_1}\right)
  \prod_{k=1}^{m-1}\left(\sum_{d=1}^\infty dq^{(m-k)d}\right) =
  \frac{q^{mM}}{1-q^m}\prod_{k=1}^{m-1}\frac{q^{m-k}}{(1-q^{m-k})^2},
\end{align*}
proving the equality in the lemma. To prove the inequality, we
observe that
\begin{equation*}
  (1-q^m)\prod_{k=1}^{m-1} (1-q^{m-k})^2 \ge \prod_{k=1}^m (1-q^k)^2 =
  \left[(1-q)^{m}\prod_{k=1}^m\left(\sum_{j=0}^{k-1}q^j\right)\right]^2.
\end{equation*}
Noting that $\sum_{j=0}^{k-1} q^j\ge ck$ when $k\le
\lfloor\frac{1}{1-q}\rfloor$ and $\sum_{j=0}^{k-1} q^j\ge
\frac{c}{1-q}$ when $k\ge \lfloor\frac{1}{1-q}\rfloor$, we deduce
\begin{equation*}
  (1-q)^m\prod_{k=1}^m\left(\sum_{j=0}^{k-1}q^j\right) \ge c^m (m')!(1-q)^{m'}
  \ge c^m(m'(1-q))^{m'},
\end{equation*}
as required.
\end{proof}

\subsubsection{Union bound} Fix $n\ge 3$, $0<q<1-\frac{2}{n}$ and let
$\pi\sim\mu_{n,q}$ for the remainder of this section and the next
(we assume that $n\ge 3$ since otherwise the range for $q$ is
empty). Using Corollary~\ref{cor:perms-that-are-mallows}, we couple
$\pi$ with the $q$-Mallows process so that
\begin{equation}\label{eq:pi_p_coupling_LDS_UB}
  \pi(i) = n+1 - p_n(i)\quad\text{ for all $1\le i\le n$}.
\end{equation}
In a similar (but not identical) way to
Section~\ref{sec:upper_bound_prob_long_LIS}, define, for an
increasing sequence of integers $I = (i_1,\ldots, i_m)$ and a
sequence of integers $J = (j_1, \ldots, j_m)$, the event
\begin{equation*}\label{eq:E_I_J_dec_event}
E_{I,J}:=\{p_{i_k}(i_k)=j_k+1\text{ for all $1\le k\le m$}\}.
\end{equation*}
Additionally, for an increasing sequence of integers $I =
(i_1,\ldots, i_m)\subseteq[n]$, define the event that $I$ is a set
of indices of a decreasing subsequence,
\begin{equation*}\label{eq:E_I_dec_event}
E_{I} := \{\pi(i_{k+1})<\pi(i_k)\text{ for all $1\le k\le m-1$}\}.
\end{equation*}
The starting point for our argument is a bound on the probability of
$E_{I,J}\cap E_I$. Recall the definition of $J_m(M)$ and $J_m'(M)$
from the previous section and define, for integers $m\ge 1$ and
$M\ge 1$, the set of integer vectors
\begin{equation*}
I_m'(M):=\{(i_1,\ldots, i_{m})\,:\,M\le i_1<
    i_2<\cdots<i_m\le n\}.
\end{equation*}

\begin{proposition}\label{prop:prob-I-decreases}
For any $m\ge 2$, $I\in I_m'(\lfloor\frac{1}{1-q}\rfloor)$ and $J\in
J_m'(0)$ we have
\begin{equation*}
 \p(E_{I,J}\cap E_I)\le \left(C(1-q)\right)^m
q^{\sum_{k=1}^m j_k} \prod_{k=1}^{m-1}\p(X_k<j_{k+1}-j_k),
\end{equation*}
where $X_k\sim \bin(i_{k+1}-i_{k} - 1,1-q^{j_k+1})$, $1 \le k \le
m-1$.
\end{proposition}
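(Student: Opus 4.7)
The plan is to couple $\pi$ with the $q$-Mallows process via $\pi(i)=n+1-p_n(i)$, as provided by Corollary~\ref{cor:perms-that-are-mallows}. Under this coupling, $E_I$ becomes the event that $p_n(i_1)<p_n(i_2)<\cdots<p_n(i_m)$, and $E_{I,J}$ fixes the initial insertion values $p_{i_k}(i_k)=j_k+1$. The proof will decouple $E_I$ across the interblock intervals $(i_k,i_{k+1})$, exploiting the fact that these intervals contribute independent randomness.

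First I would prove an order-preservation statement in the spirit of Lemma~\ref{lem:monotonicity_Mallows_process}: for any $t\ge i_{k+1}$, the relative order of $p_t(i_k)$ and $p_t(i_{k+1})$ equals that at time $i_{k+1}$. This follows by induction on $t$, since in one step both coordinates shift by the same amount unless the newly inserted value $p_{t+1}(t+1)$ falls strictly between them, in which case only the larger one shifts, preserving the sign of the difference. Consequently, on $E_{I,J}$,
\[
p_n(i_k)<p_n(i_{k+1}) \iff p_{i_{k+1}}(i_k)\le j_{k+1} \iff p_{i_{k+1}-1}(i_k)\le j_{k+1},
\]
the last equivalence coming from a two-case check using $p_{i_{k+1}}(i_k)=p_{i_{k+1}-1}(i_k)+\mathbbm{1}[j_{k+1}+1\le p_{i_{k+1}-1}(i_k)]$.

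I would then introduce the growth variable $Y_k:=p_{i_{k+1}-1}(i_k)-p_{i_k}(i_k)$, so that on $E_{I,J}$ the event $E_I$ is equivalent to $\bigcap_{k=1}^{m-1}\{Y_k<j_{k+1}-j_k\}$. The key structural observation is that $Y_k$ is a deterministic function of $p_{i_k}(i_k)$ (fixed by $E_{I,J}$) and of the independent variables $(p_t(t))_{i_k<t<i_{k+1}}$; the index sets $(i_k,i_{k+1})$ are pairwise disjoint for different $k$ and disjoint from $\{i_1,\ldots,i_m\}$. Hence, conditional on $E_{I,J}$, the $Y_k$'s are mutually independent and each one's distribution depends only on $p_{i_k}(i_k)=j_k+1$, yielding the factorization
\[
\P(E_I\cap E_{I,J})=\P(E_{I,J})\prod_{k=1}^{m-1}\P\bigl(Y_k<j_{k+1}-j_k \bigm| p_{i_k}(i_k)=j_k+1\bigr).
\]

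For the first factor, \eqref{eq:Mallows_folder_dist} gives $\P(E_{I,J})=\prod_k (1-q)q^{j_k}/(1-q^{i_k})$, and the hypothesis $i_k\ge\lfloor 1/(1-q)\rfloor$ together with the standard inequality $q^{1/(1-q)}\le e^{-1}$ (for $q\ge 1/2$) or the trivial bound $q^{i_k}\le q\le 1/2$ (for $q<1/2$) yields $1-q^{i_k}\ge c$ for an absolute $c>0$, so $\P(E_{I,J})\le (C(1-q))^m q^{\sum j_k}$. For each factor in the product, I would apply a standard stochastic domination: the process $M_t:=p_t(i_k)$ is non-decreasing from $M_{i_k}=j_k+1$, and the conditional probability of an increment at time $t\in(i_k,i_{k+1})$ is $\P(p_t(t)\le M_{t-1})=(1-q^{M_{t-1}})/(1-q^t)\ge 1-q^{j_k+1}$. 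Hence $Y_k$ stochastically dominates $X_k\sim\bin(i_{k+1}-i_k-1,\,1-q^{j_k+1})$, giving $\P(Y_k<j_{k+1}-j_k)\le \P(X_k<j_{k+1}-j_k)$; combining with the previous displays yields the proposition. The main obstacle is the reduction of $E_I$ to a product of independent interblock events, which hinges on the one-step shift from $p_{i_{k+1}}(i_k)$ to $p_{i_{k+1}-1}(i_k)$: without it, the variable $p_{i_{k+1}}(i_{k+1})$ would enter the definition of $Y_k$ and destroy the independence across blocks.
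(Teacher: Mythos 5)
Your proposal is correct and follows essentially the same route as the paper: the same coupling $\pi(i)=n+1-p_n(i)$, the same reduction of $E_I$ on $E_{I,J}$ to events $\{Y_k<j_{k+1}-j_k\}$ about the growth of $p_\cdot(i_k)$ between insertion times, the same conditional independence across the disjoint index ranges, and the same stochastic domination by $\bin(i_{k+1}-i_k-1,\,1-q^{j_k+1})$ together with the bound $1-q^{i_k}\ge c$ from $i_k\ge\lfloor\frac{1}{1-q}\rfloor$. The only differences are cosmetic (you track $p_{i_{k+1}-1}(i_k)$ instead of the paper's $D_k=p_{i_{k+1}}(i_k)-p_{i_k}(i_k)$, and you omit the trivial degenerate case $j_k\ge i_k$, where $\P(E_{I,J})=0$ and the bound holds anyway).
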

\begin{proof}
Fix $I$ and $J$ as in the proposition. By the
coupling~\eqref{eq:pi_p_coupling_LDS_UB} of $\pi$ with the Mallows
process, and the definition of the Mallows process, the event
$E_{I,J}\cap E_I$ occurs if and only if
\begin{align}
p_{i_k}(i_k) = j_k + 1,  \ \ \ \ \ & \forall 1 \le k \le m, \nonumber\\
p_{i_{k+1}}(i_{k+1})>p_{i_{k+1}}(i_k), \ \ \ \ \ & \forall 1 \le k
\le m-1.\label{eq:decreasing_cond_for_p}
\end{align}
If some $j_k\ge i_k$ the probability of this event is zero and the
proposition follows trivially. Assume from now on that $j_k<i_k$ for
all $k$. Then \eqref{eq:Mallows_folder_dist} implies that
\begin{equation}\label{eq:assignment_prob_bound}
  \P(E_{I,J}) = \P(\cap_{k=1}^m \{p_{i_k}(i_k) = j_k + 1\}) = \prod_{k=1}^m
  \frac{(1-q)q^{j_k}}{1-q^{i_k}} \le (C(1-q))^m q^{\sum_{k=1}^m
  j_k}
\end{equation}
since $i_k\ge \frac{1}{2(1-q)}$ for all $k$. Now, define the random
variables $D_k:=p_{i_{k+1}}(i_{k}) - p_{i_k}(i_k)$ for $1\le k\le
m-1$. Then we may reinterpret \eqref{eq:decreasing_cond_for_p} in
terms of the $D_k$. Indeed,
\begin{equation}\label{eq:E_I_D_k_reinterpretation}
  \text{on the event $E(I,J)$},\;\;
  p_{i_{k+1}}(i_{k+1})>p_{i_{k+1}}(i_k)\;\;\text{if and only if}\;\;
  D_k<j_{k+1} - j_k
\end{equation}
for each $1\le k\le m-1$. By \eqref{eq:Mallows_iteration_procedure},
\begin{equation}\label{eq:D_k_def}
  D_k\ge \sum_{i=i_k+1}^{i_{k+1}-1}{\bf 1}_{\{p_i(i)\le j_k+1\}}\quad\text{on the event
  $E_{I,J}$},
\end{equation}
where ${\bf 1}_E$ denotes the indicator random variable of the event
$E$, and, for all $i$,
\begin{equation}\label{eq:drift_prob}
  \P(p_i(i)\le j_k+1) = \frac{1+q+\cdots +
  q^{j_k}}{1+q+\cdots+q^{i-1}} = \frac{1-q^{j_k+1}}{1-q^i} \ge
  1-q^{j_k+1}.
\end{equation}
Hence, using the fact that the $(p_i(i))$ are independent, we may
combine \eqref{eq:D_k_def} and \eqref{eq:drift_prob} to deduce that
conditioned on $E_{I,J}$, the $(D_k)$ are independent and each $D_k$
stochastically dominates a binomial random variable with
$i_{k+1}-i_k-1$ trials and success probability $1-q^{j_k+1}$. In
particular,
\begin{equation*}
  \P(\cap_{k=1}^{m-1} \{D_k<j_{k+1}-j_k\}\,|\, E_{I,J}) \le \prod_{k=1}^{m-1}\P(X_k < j_{k+1}-j_k),
\end{equation*}
where $X_k\sim \bin(i_{k+1}-i_{k}-1,1-q^{j_k+1})$. Combined with
\eqref{eq:assignment_prob_bound} and
\eqref{eq:E_I_D_k_reinterpretation} this proves the proposition.
\end{proof}

As the next step in using a union bound over the sequences $I$ and
$J$, we continue by performing the summation over $I$.


\begin{proposition}\label{prop:sum-I-for_LDS}
For any $m\ge 2$ and $J\in J_m'(0)$ we have
\begin{equation*}
\sum_{I\in I_m'(\lfloor\frac{1}{1-q}\rfloor)} \p(E_{I,J}\cap E_I)\le
n(C(1-q))^m q^{\sum_{k=1}^m j_k} \prod_{k=1}^{m-1}
\frac{j_{k+1}-j_k}{1-q^{j_k+1}}.
\end{equation*}
\end{proposition}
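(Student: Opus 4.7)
The plan is to combine Proposition~\ref{prop:prob-I-decreases} with a change of variables that decouples the sum over the indices $(i_1,\ldots,i_m)$, and then to evaluate the resulting one-dimensional sums with Lemma~\ref{lem:sum_bin_prob}.

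First I would substitute the bound from Proposition~\ref{prop:prob-I-decreases}, which gives
\[
\sum_{I\in I_m'(\lfloor\frac{1}{1-q}\rfloor)} \p(E_{I,J}\cap E_I)\le (C(1-q))^m q^{\sum_{k=1}^m j_k}\sum_{I\in I_m'(\lfloor\frac{1}{1-q}\rfloor)}\prod_{k=1}^{m-1}\p(X_k<j_{k+1}-j_k),
\]
where for each $I$ and each $k$, $X_k\sim\bin(i_{k+1}-i_k-1,\,1-q^{j_k+1})$. The key observation is that the $k$-th factor depends on $I$ only through the \emph{gap} $d_k:=i_{k+1}-i_k-1$. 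Hence I would change variables from $(i_1,i_2,\ldots,i_m)$ to $(i_1,d_1,\ldots,d_{m-1})$, with the $(d_k)$ ranging independently over $\{0,1,2,\ldots\}$ and $i_1$ ranging over $\{\lfloor 1/(1-q)\rfloor,\ldots,n\}$, giving at most $n$ choices. Relaxing the original constraint $i_m\le n$ to allow each $d_k$ to range over all nonnegative integers can only enlarge the sum, and the resulting sum factorizes as
\[
\sum_{I\in I_m'(\lfloor\frac{1}{1-q}\rfloor)}\prod_{k=1}^{m-1}\p(X_k<j_{k+1}-j_k)\;\le\;n\prod_{k=1}^{m-1}\sum_{d=0}^{\infty}\p\bigl(Y_k(d)<j_{k+1}-j_k\bigr),
\]
where $Y_k(d)\sim\bin(d,\,1-q^{j_k+1})$.

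Next I would apply Lemma~\ref{lem:sum_bin_prob} with $p=1-q^{j_k+1}$ and $r=j_{k+1}-j_k\ge 1$ (recall $J\in J_m'(0)$ is strictly increasing), yielding
\[
\sum_{d=0}^{\infty}\p\bigl(Y_k(d)<j_{k+1}-j_k\bigr)=\frac{j_{k+1}-j_k}{1-q^{j_k+1}}.
\]
Substituting this back and collecting the factors gives
\[
\sum_{I\in I_m'(\lfloor\frac{1}{1-q}\rfloor)} \p(E_{I,J}\cap E_I)\le n(C(1-q))^m q^{\sum_{k=1}^m j_k}\prod_{k=1}^{m-1}\frac{j_{k+1}-j_k}{1-q^{j_k+1}},
\]
which is the claimed bound.

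There is no real obstacle here — the only delicate point is recognising that the tail probability $\p(X_k<j_{k+1}-j_k)$ depends on $I$ only through $i_{k+1}-i_k$, so that after enlarging the index set to one that is a Cartesian product, Lemma~\ref{lem:sum_bin_prob} applies termwise and gives exactly the product on the right-hand side. The factor of $n$ simply accounts for the free choice of $i_1$.
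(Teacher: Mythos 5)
Your proposal is correct and follows essentially the same route as the paper: substitute the bound from Proposition~\ref{prop:prob-I-decreases}, change variables to the gaps $d_k$ (the paper uses $d_k=i_{k+1}-i_k$ rather than $i_{k+1}-i_k-1$, an immaterial shift), relax the constraint $i_m\le n$ so the sum factorizes with a factor of $n$ from the free choice of $i_1$, and evaluate each one-dimensional sum via Lemma~\ref{lem:sum_bin_prob} with $r=j_{k+1}-j_k\ge 1$. No gaps.
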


\begin{proof}
Comparing the result of the proposition with
Proposition~\ref{prop:prob-I-decreases} we see it suffices to show
that
\begin{equation*}
    \sum_{I_m'(\lfloor\frac{1}{1-q}\rfloor)} \prod_{k=1}^{m-1}\p(X_k(I)<j_{k+1}-j_k)\le n \prod_{k=1}^{m-1}
    \frac{j_{k+1}-j_k}{1-q^{j_k+1}},
  \end{equation*}
where $X_k(I)\sim \bin(i_{k+1}-i_{k}-1,1-q^{j_k+1})$. We change
variables, transforming the vector $I=(i_1,\ldots, i_m)$ to the
vector $D=(i_1,d_1,\ldots, d_{m-1})$ via the mapping
\begin{equation*}
  d_k := i_{k+1}-i_k.
\end{equation*}
Observing that this transformation is one-to-one, we have
\begin{equation*}
  \sum_I \prod_{k=1}^{m-1}\p(X_k(I)<j_{k+1}-j_k)\le \sum_D
  \prod_{k=1}^{m-1}\p(X_k(D)<j_{k+1}-j_k),
\end{equation*}
where the sum is over all integer vectors $D$ satisfying $1\le
i_1\le n$ and $d_k\ge 1$ for $1\le k\le m-1$, and where $X_k(D)\sim
\bin(d_k-1,1-q^{j_k+1})$. We continue by observing that the product
does not depend on $i_1$, and further observing that the sum of
products becomes a product of sums since the factors involve
different $d_k$'s, whence
\begin{equation*}
  \sum_D
  \prod_{k=1}^{m-1}\p(X_k(D)<j_{k+1}-j_k) \le n \prod_{k=1}^{m-1}
  \left[\sum_{d=1}^\infty \P(X_k(d)<j_{k+1}-j_k)\right],
\end{equation*}
where $X_k(d)\sim \bin(d-1,1-q^{j_k+1})$. Applying
Lemma~\ref{lem:sum_bin_prob} we conclude that
\begin{equation*}
  \sum_{d=1}^\infty \P(X_k(d)<j_{k+1}-j_k) =
  \frac{j_{k+1}-j_k}{1-q^{j_k+1}},
\end{equation*}
and the proposition follows.
\end{proof}
We next perform the summation over $J$. This is best done separately
over two regimes. To deal with certain edge cases later in the
proof, we extend our previous definitions by setting
$J_0(M):=\{\emptyset\}, J_0'(M):=\{\emptyset\},
I_0'(M):=\{\emptyset\}$, for integer $M\ge 1$, and setting
$\P(E_{I,J})=\P(E_I) = 1$ whenever $I=J=\emptyset$. We also adopt
the convention that $0^0$ is $1$.
\begin{proposition}\label{prop:final_I_J_summation}
There exists an absolute constant $C_1>0$ such that for any integer
$m\ge 0$ we have
\begin{equation*}
\sum_{J\in J_m(\lfloor \frac{1}{2(1-q)}\rfloor)}\sum_{I\in
I_m'(\lfloor\frac{1}{1-q}\rfloor)} \p(E_{I,J}\cap E_I)\le
n^2\left(\frac{C_1}{(1-q)m^2}\right)^m,
\end{equation*}
and
\begin{equation*}
\sum_{J\in J_m'(\lfloor \frac{1}{2(1-q)}\rfloor)}\sum_{I\in
I_m'(\lfloor\frac{1}{1-q}\rfloor)} \p(E_{I,J}\cap E_I)\le
\frac{n(C_1(1-q))^m q^{m(m-1)/2}}{(m'(1-q))^{2m'}},
\end{equation*}
where $m':=\min(m,\lfloor\frac{1}{1-q}\rfloor)$.
\end{proposition}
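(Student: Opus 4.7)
The plan is to apply Proposition \ref{prop:sum-I-for_LDS}, which already evaluates $\sum_I \p(E_{I,J}\cap E_I)$ for each fixed $J$, and then perform the remaining summation over $J$ in each of the two regimes of $j$-values using the combinatorial estimates Corollary \ref{cor:sum_ind} and Lemma \ref{lem:second_j_bound}. The key observation is that Proposition \ref{prop:sum-I-for_LDS} produces the factor $\prod_{k=1}^{m-1}(j_{k+1}-j_k)/(1-q^{j_k+1})$, and the denominators $1-q^{j_k+1}$ must be estimated by entirely different means in the two regimes. The heart of the matter is therefore to justify two clean bounds on these denominators and then feed them into the appropriate combinatorial lemma.

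For the first bound, all indices satisfy $j_k < \lfloor 1/(2(1-q))\rfloor$. In this regime I would show $1-q^{j_k+1} \ge c(1-q)(j_k+1)$ for an absolute constant $c>0$, starting from $1-q^{j_k+1}=(1-q)\sum_{i=0}^{j_k}q^i \ge (1-q)(j_k+1)q^{j_k}$ and verifying $q^{j_k} \ge e^{-1}$ via the inequality $-\log q \le (1-q)/q$ combined with $j_k(1-q) \le 1/2$ (treating $q<1/2$ and $q\ge 1/2$ separately). Using $q^{\sum j_k}\le 1$, Proposition \ref{prop:sum-I-for_LDS} then yields
\[
\sum_I \p(E_{I,J}\cap E_I) \le nC^m(1-q)\prod_{k=1}^{m-1}\frac{j_{k+1}-j_k}{j_k+1}.
\]
Summing over $J\in J_m(M)$ with $M=\lfloor 1/(2(1-q))\rfloor$ via Corollary \ref{cor:sum_ind} contributes $\log(M+1)(CM/m^2)^m \le \log(1/(1-q))(C/((1-q)m^2))^m$. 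The residual factor $n(1-q)\log(1/(1-q))$ is bounded by an absolute constant times $n$, which I absorb into $n^2$ using $n\ge 3$, producing the claimed $n^2(C_1/((1-q)m^2))^m$.

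For the second bound, every $j_k \ge \lfloor 1/(2(1-q))\rfloor$, so $q^{j_k+1}\le q^{1/(2(1-q))}\le e^{-1/2}$ (using $\log q \le -(1-q)$), hence $1-q^{j_k+1}\ge c$ for an absolute constant. Proposition \ref{prop:sum-I-for_LDS} becomes
\[
\sum_I \p(E_{I,J}\cap E_I) \le n(C(1-q))^m q^{\sum j_k}\prod_{k=1}^{m-1}(j_{k+1}-j_k),
\]
and Lemma \ref{lem:second_j_bound} applied to $\sum_{J_m'(M)}q^{\sum j_k}\prod(j_{k+1}-j_k)$ supplies the factor $C^m q^{m(m-1)/2+mM}/(m'(1-q))^{2m'}$. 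Since $q^{mM}\le 1$, combining the two estimates and absorbing the $C^m$ into the overall constant delivers the required $n(C_1(1-q))^m q^{m(m-1)/2}/(m'(1-q))^{2m'}$.

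The cases $m=0$ (trivial, as both sides reduce to $1\le n^2$ or $1\le n$) and $m=1$ (direct summation of the geometric weights from \eqref{eq:Mallows_folder_dist}) are handled separately, since Corollary \ref{cor:sum_ind} and Lemma \ref{lem:second_j_bound} are stated for $m\ge 2$. I expect the main obstacle to be neither conceptual nor technical — the calculations are straightforward applications of earlier lemmas — but rather the careful bookkeeping of $(1-q)$-powers: starting from $(1-q)^m$ in Proposition \ref{prop:sum-I-for_LDS}, the first regime loses $(1-q)^{m-1}$ from inverting the denominators, while the second retains the full $(1-q)^m$, and one must track these shifts to see that the final exponents match the proposition's statement.
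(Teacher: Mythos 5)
Your proposal is correct and follows essentially the same route as the paper: handle $m\le 1$ separately, apply Proposition~\ref{prop:sum-I-for_LDS}, lower-bound the denominators $1-q^{j_k+1}$ by $c(1-q)(j_k+1)$ when $j_k<\lfloor\frac{1}{2(1-q)}\rfloor$ and by an absolute constant when $j_k\ge\lfloor\frac{1}{2(1-q)}\rfloor$, and then invoke Corollary~\ref{cor:sum_ind} and Lemma~\ref{lem:second_j_bound} respectively, discarding $q^{\sum j_k}\le 1$ in the first regime and $q^{mM}\le 1$ in the second. The only differences are cosmetic (the paper derives both denominator bounds from $1-q^a\ge\frac{(1-q)a}{1+(1-q)a}$ and bounds the logarithmic factor by $n$, whereas you use $q^{j_k}\ge e^{-1}$ and $(1-q)\log\frac{1}{1-q}\le C$), and these do not change the argument.
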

\begin{proof}
The cases that $m\in\{0,1\}$ follow trivially since the right-hand
side of the above inequalities is larger than $1$ when $C_1$ is
sufficiently large. Thus we assume that $m\ge 2$.
  The relation
  \begin{equation*}
    1-q^a\ge \frac{(1-q)a}{1+(1-q)a}
  \end{equation*}
  holds for any $a\ge0$. Hence by Proposition~\ref{prop:sum-I-for_LDS}
  we have
  \begin{align*}
    \sum_{J\in J_m(\lfloor \frac{1}{2(1-q)}\rfloor)}&\sum_{I\in I_m'(\lfloor\frac{1}{1-q}\rfloor)}
\p(E_{I,J}\cap E_I) \le n(C(1-q))^m \sum_{J\in J_m(\lfloor
\frac{1}{2(1-q)}\rfloor)} \prod_{k=1}^{m-1}
\frac{j_{k+1}-j_k}{1-q^{j_k+1}} \le\\
&\le nC^m \sum_{J\in J_m(\lfloor \frac{1}{2(1-q)}\rfloor)}
\prod_{k=1}^{m-1} \frac{j_{k+1}-j_k}{j_k+1}.
  \end{align*}
  Thus, noting that $\log(\lfloor
  \frac{1}{2(1-q)}\rfloor+1)\le n$, the first part of the proposition follows from
  Corollary~\ref{cor:sum_ind}.

  Similarly,
  \begin{align*}
    \sum_{J\in J_m'(\lfloor \frac{1}{2(1-q)}\rfloor)}&\sum_{I\in I_m'(\lceil\frac{1}{1-q}\rceil)}
\p(E_{I,J}\cap E_I) \le n(C(1-q)^m \sum_{J\in J_m'(\lfloor
\frac{1}{2(1-q)}\rfloor)} q^{\sum_{k=1}^m j_k}\prod_{k=1}^{m-1}
\frac{j_{k+1}-j_k}{1-q^{j_k+1}} \le\\
&\le n(C(1-q))^m \sum_{J\in J_m'(\lfloor \frac{1}{2(1-q)}\rfloor)}
q^{\sum_{k=1}^m j_k}\prod_{k=1}^{m-1}(j_{k+1}-j_k),
  \end{align*}
from which the second part of the proposition follows by applying
Lemma~\ref{lem:second_j_bound} (and bounding $q^{m\lfloor
\frac{1}{2(1-q)}\rfloor}\le 1$).
\end{proof}

\subsubsection{Proof of bound} In this section we complete the estimate of
$\P(\LDS(\pi)\ge L)$. First, if $0<q<\frac{1}{2}$, we may apply the union bound and the
second part of Proposition~\ref{prop:final_I_J_summation} in a
straightforward way to obtain that for any $L\ge 2$,
\begin{equation*}
  \P(\LDS(\pi)\ge L)\le \sum_{J\in J_L'(0)}\sum_{I\in I_L'(1)} \p(E_{I,J}\cap E_I)
  \le nC^L q^{\frac{L(L-1)}{2}}\quad\left(0<q<\frac{1}{2}\right),
\end{equation*}
proving \eqref{eq:LDS_upper_bound_in_sec} for this range of $q$ and
establishing \eqref{eq:LDS_refined_upper_bound_in_sec}.

In the rest of the section we assume $q\ge\frac{1}{2}$ (and
$q<1-\frac{2}{n}$, as before). Fix $2\le L\le n$. The union bound
yields
\begin{equation}\label{eq:LDS_pi_union_bound}
  \P(\LDS(\pi)\ge L) \le \sum_{I\in I_L'(1)}
  \P(E_I).
\end{equation}
Now, given $I=(i_1,\ldots, i_L)\in I_L'(1)$ we let $a(I)$ be the
maximal $k$ such that $i_k<\lfloor\frac{1}{1-q}\rfloor$ (or 0 if no
such $k$ exists), and let $I_1:=(i_1,\ldots,i_{a(I)})$ and
$I_2:=(i_{a(I)+1},\ldots, i_L)$ (where one of these vectors may be
empty). By the independence of induced orderings Lemma
\ref{lem:indep-seqs},
\begin{equation}\label{eq:LDS_indep_induced_orderings}
  \P(E_I)\le \p(E_{I_1}\cap E_{I_2}) = \P(E_{I_1})\P(E_{I_2}).
\end{equation}
Define, for integers $m\ge 1$ and $M\ge 2$, the set of integer
vectors
\begin{equation*}
I_m(M):=\{(i_1,\ldots, i_{m})\,:\,1\le i_1<
    i_2<\cdots<i_m<M\}.
\end{equation*}
As before, we also set $I_0(M):=\{\emptyset\}$. Plugging
\eqref{eq:LDS_indep_induced_orderings} into
\eqref{eq:LDS_pi_union_bound} and using the translation invariance
Lemma~\ref{lem:shift} (with our assumption that
$\frac{1}{1-q}<\frac{n}{2}$) we find that
\begin{align}\label{eq:LDS_break_I}
  \P&(\LDS(\pi)\ge L)\le \sum_{I\in I_L'(1)}
  \P(E_I)\le \sum_{a=0}^{\min(L,\lfloor\frac{1}{1-q}\rfloor-1)} \sum_{I_1\in
  I_a(\lfloor\frac{1}{1-q}\rfloor)} \sum_{I_2\in
  I_{L-a}'(\lfloor\frac{1}{1-q}\rfloor)} \P(E_{I_1})\P(E_{I_2})\le\nonumber\\
  &\le \sum_{a=0}^{\min(L,\lfloor\frac{1}{1-q}\rfloor-1)}\left(\sum_{I\in
  I_a(\lfloor\frac{1}{1-q}\rfloor)} \P(E_I)\right) \left(\sum_{I\in
  I_{L-a}'(\lfloor\frac{1}{1-q}\rfloor)} \P(E_I)\right)\le\nonumber\\
  &\le \sum_{a=0}^{\min(L,\lfloor\frac{1}{1-q}\rfloor-1)}\left(\sum_{I\in
  I_a'(\lfloor\frac{1}{1-q}\rfloor)} \P(E_I)\right) \left(\sum_{I\in
  I_{L-a}'(\lfloor\frac{1}{1-q}\rfloor)} \P(E_I)\right).
\end{align}
Our next task is to estimate the first factor in the above product
for a fixed $0\le a\le \min(L,\lfloor\frac{1}{1-q}\rfloor-1)$. Using
the union bound,
\begin{equation*}
  \sum_{I\in
  I_a'(\lfloor\frac{1}{1-q}\rfloor)} \P(E_I)\le \sum_{J\in J_a'(0)}\sum_{I\in
  I_a'(\lfloor\frac{1}{1-q}\rfloor)} \p(E_{I,J}\cap E_I).
\end{equation*}
Now, given $J=(i_1,\ldots, i_a)\in J_a'(0)$ we let $b(J)$ be the
maximal $k$ such that $j_k<\lfloor\frac{1}{2(1-q)}\rfloor$ (or 0 if
no such $k$ exists), let $I^1:=(i_1,\ldots,i_{b(J)})$,
$I^2:=(i_{b(J)+1},\ldots, i_L)$, $J^1:=(j_1,\ldots,j_{b(J)})$ and
$J^2:=(j_{b(J)+1},\ldots, j_L)$ (where any of these vectors may be
empty). By Fact~\ref{fact:indep-of-orderings-on-blocks}, the event
$E_{I^1,J^1}\cap E_{I^1}$ is a function of $(p_i(i))$ for $i\le
i_{b_J}$, and the event $E_{I^2, J^2}\cap E_{I^2}$ is a function of
$(p_i(i))$ for $i>i_{b_J}$. Since the $(p_i(i))$ are independent we
obtain
\begin{equation*}
  \P(E_{I,J}\cap E_I)\le \P(E_{I^1,J^1}\cap E_{I^1}\cap E_{I^2, J^2}\cap E_{I^2})= \P(E_{I^1,J^1}\cap E_{I^1})\P(E_{I^2, J^2}\cap E_{I^2}).
\end{equation*}
Thus, in a similar way to \eqref{eq:LDS_break_I}, we obtain
\begin{align}\label{eq:p_I_decreasing_estimate}
  &\sum_{I\in
  I_a'(\lfloor\frac{1}{1-q}\rfloor)} \P(E_I)\le\nonumber\\
  &\le
  \sum_{b=0}^{a} \left(\sum_{J\in J_b(\lfloor\frac{1}{2(1-q)}\rfloor)}\sum_{I\in
  I_b'(\lfloor\frac{1}{1-q}\rfloor)} \P(E_{I,J}\cap E_I)\right) \left(\sum_{J\in J_{a-b}'(\lfloor\frac{1}{2(1-q)}\rfloor)}\sum_{I\in
  I_{a-b}'(\lfloor\frac{1}{1-q}\rfloor)} \P(E_{I,J}\cap E_I)\right).
\end{align}
To estimate this product, we let $C_1>0$ be the constant from
Proposition~\ref{prop:final_I_J_summation} and define, for $m\ge 0$,
\begin{align*}
  f(m)&:=\left(\frac{C_1}{(1-q)m^2}\right)^m,\\
  g(m)&:=\frac{(C_1(1-q))^m q^{m(m-1)/2}}{(m'(1-q))^{2m'}},
\end{align*}
where $m':=\min(m,\lfloor\frac{1}{1-q}\rfloor)$. It is immediate
that $g(m)\le f(m)$ if $m\le \lfloor\frac{1}{1-q}\rfloor$. In
addition, as in the last inequality of
\eqref{eq:comparison_for_x_to_x},
\begin{equation}\label{eq:f_multiplicative}
  f(k)f(m)\le C^{k+m}f(k+m)
\end{equation}
for $m,k\ge 0$. Now, applying
Proposition~\ref{prop:final_I_J_summation} to the sums in
\eqref{eq:p_I_decreasing_estimate} and recalling that
$a<\lfloor\frac{1}{1-q}\rfloor$, we deduce
\begin{align}\label{eq:first_I_in_I_a_estimate}
  \sum_{I\in
  I_a'(\lfloor\frac{1}{1-q}\rfloor)} \P(E_I)\le
  n^3\sum_{b=0}^{a} f(b)g(a-b)\le n^3\sum_{b=0}^{a}
  f(b)f(a-b)\le
  C^a n^3 f(a).
\end{align}
In a completely analogous fashion, we estimate the second factor in
\eqref{eq:LDS_break_I} by
\begin{equation}\label{eq:second_I_in_I_a_estimate}
  \sum_{I\in
  I_{L-a}'(\lfloor\frac{1}{1-q}\rfloor)} \P(E_I)\le
  n^3\sum_{b=0}^{\min(L-a,\lfloor\frac{1}{2(1-q)}\rfloor-1)}
  f(b)g(L-a-b).
\end{equation}
Plugging \eqref{eq:first_I_in_I_a_estimate} and
\eqref{eq:second_I_in_I_a_estimate} into \eqref{eq:LDS_break_I} and
again using \eqref{eq:f_multiplicative} we finally arrive at
\begin{align}\label{eq:final_LDS_inequality}
  \P(\LDS(\pi)\ge L)&\le n^6
  \sum_{a=0}^{\min(L,\lfloor\frac{1}{1-q}\rfloor-1)}\
  \sum_{b=0}^{\min(L-a,\lfloor\frac{1}{2(1-q)}\rfloor-1)}
  C^a f(a)f(b)g(L-a-b)\le\nonumber\\
  &\le C^L n^8\max_{0\le m\le
  \min\left(L,\frac{3}{2(1-q)}\right)} f(m)g(L-m).
\end{align}
It remains to estimate $f(m)g(L-m)$. It is simple to see that
$g(m)\le C^m f(m)$ when $m\le \frac{3}{1-q}$ since for such $m$,
$\frac{(m(1-q))^{2m}}{(m'(1-q))^{2m'}}\le C^m$. Hence, if we assume
that $L\le \frac{3}{1-q}$ we obtain by \eqref{eq:f_multiplicative}
that
\begin{equation*}
  \P(\LDS(\pi)\ge L) \le C^L n^8f(L) = n^8
  \left(\frac{C}{(1-q)L^2}\right)^L\qquad \left(L\le
\frac{3}{1-q}\right),
\end{equation*}
proving \eqref{eq:LDS_upper_bound_in_sec} in this case. We continue
to the case $L>\frac{3}{1-q}$. For all $0\le m\le \frac{3}{2(1-q)}$
we have $L-m\ge \frac{1}{1-q}$, $((1-q)^2m^2)^{-m}\le
C^{\frac{1}{1-q}}$ (by differentiating with respect to $m$) and
$q^{-m}\le C$ (by our assumption that $q\ge \frac{1}{2}$). Thus, for
these $m$,
\begin{align*}
  f(m)g(L-m) &= \left(\frac{C}{(1-q)m^2}\right)^m
  \frac{(C(1-q))^{L-m}q^{(L-m)(L-m-1)/2}}{((L-m)'(1-q))^{2(L-m)'}}=\\
  &=\frac{q^{-mL+m/2} }{((1-q)^2m^2)^m
  (\lfloor\frac{1}{1-q}\rfloor
  (1-q))^{2\lfloor\frac{1}{1-q}\rfloor}} C^{L-m}(1-q)^L q^{\frac{L(L-1)}{2}}\le\\
  &\le (C(1-q))^L q^{\frac{L(L-1)}{2}} \qquad\qquad\qquad\qquad\quad \left(L>
\frac{3}{1-q}\right).
\end{align*}
Using this estimate in \eqref{eq:final_LDS_inequality} finishes the
proof of \eqref{eq:LDS_upper_bound_in_sec}.

\subsection{A lower bound on $\p(\LDS(\pi) \ge L)$}\label{sec:lds_lb}

In this section we prove part \eqref{part:LDS_greater_than_L_lb} of
Theorem \ref{thm:lds-largedev} by establishing the bound
\eqref{eq:LDS_greater_than_L_lb}, giving a lower bound on the
probability of a long decreasing subsequence. We give two bounds,
one which applies only when the length $L$ of the subsequence
satisfies $C(1-q)^{-1/2} < L < (1-q)^{-1}$, and one which applies
for all $L$. The first bound is superior to the second in the cases
to which it applies.


\begin{proposition}
Let $n\ge 1$, $\frac{1}{2}\le q\le 1-\frac{4}{n}$ and
$\pi\sim\mu_{n,q}$. There exist absolute constants $C,c>0$ such that
for all integer $L$ satisfying
\begin{equation}\label{eq:conditions_on_L_LDS_lower_bound}
\frac{C}{\sqrt{1-q}} \le L \le \frac{1}{1-q}
\end{equation}
we have
\[
\p(\LDS(\pi) \ge L) \ge  1-
\left(1-\left(\frac{c}{(1-q)L^2}\right)^L\right)^{\left\lfloor\frac{n(1-q)}{4}\right\rfloor}.
\]
\end{proposition}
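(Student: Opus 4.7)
The plan is to use a block decomposition to reduce the proposition to a per-block lower bound, and then to use the Mallows process to produce a long decreasing subsequence inside a single block. Partition $[n]$ into $k:=\lfloor n(1-q)/4\rfloor$ disjoint consecutive blocks $B_1,\ldots,B_k$, each of length $m:=\lfloor 4/(1-q)\rfloor$ (discarding any remaining indices at the right). By Corollary~\ref{cor:induced_permutation_Mallows} each induced permutation $\pi_{B_i}$ is distributed as $\mu_{m,q}$, and by Lemma~\ref{lem:indep-seqs} the family $(\pi_{B_i})_{i=1}^{k}$ is mutually independent. Since any decreasing subsequence inside $B_i$ is automatically a decreasing subsequence of $\pi$,
\[
\P(\LDS(\pi)<L)\le\prod_{i=1}^{k}\P(\LDS(\pi_{B_i})<L)=\bigl(1-\P(\LDS(\pi_{B_1})\ge L)\bigr)^{k},
\]
which reduces the proposition to the per-block estimate $\P(\LDS(\pi_{B_1})\ge L)\ge\bigl(c/((1-q)L^{2})\bigr)^{L}$.

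For this per-block estimate I would couple $\pi_{B_1}$ with a $q$-Mallows process $(p_j)_{j=1}^{m}$ via $\pi_{B_1}(j)=m+1-p_m(j)$ using Corollary~\ref{cor:perms-that-are-mallows}, converting $\LDS(\pi_{B_1})$ into $\LIS(p_m)$, and then build a long increasing subsequence of $p_m$ by a stopping-time argument modeled on Section~\ref{sec:lis_lb}. Specifically, place $L$ disjoint position-windows $W_1<W_2<\cdots<W_L$ in the lower portion of $[1,m]$, each of width of order $1/((1-q)L)$ and separated by gaps of the same order, and say that the $s$-th success occurs at a time $t$ at which a new element is deposited in $W_s$ by the Mallows process. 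Each such success has probability of order $c/L$ per step by~\eqref{eq:Mallows_folder_dist}, so that the total number of successes stochastically dominates a $\bin(m,c/L)$ random variable; applying the binomial lower-tail bound~\eqref{eq:bin_ld_lower_bound_for_upper_tail} of Proposition~\ref{prop:binom-largedev} with $n=m$, $p=c/L$ and target $L$ then yields a probability of order $(mp/(eL))^{L}=(c'/((1-q)L^{2}))^{L}$, exactly the rate required.

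The main obstacle is that the $L$ successes, on their own, do not immediately produce an increasing subsequence of $p_m$ at the final time $m$: later insertions in the Mallows process push the positions of already-deposited elements upward, and an element deposited in $W_s$ may drift into some $W_{s'}$ with $s'>s$, thereby destroying the monotone relative order. This is in contrast with the LIS argument of Section~\ref{sec:lis_lb}, where the monotonicity of $t\mapsto p_t(i)$ in $t$ automatically converts hits of a single window into a decreasing subsequence of $p_n$-values. Controlling this drift will require an additional binomial-tail estimate on the number of push-events affecting each deposit, together with a careful calibration of window widths, inter-window gaps, and the effective time horizons after each success --- all naturally on the scale $1/((1-q)L)$ in the parameter regime under consideration --- so that with the claimed probability the windows are hit in increasing order of $s$ and no deposit drifts across a gap before time $m$.
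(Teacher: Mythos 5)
Your outer reduction is sound and close in spirit to the paper's: splitting $[n]$ into $\lfloor n(1-q)/4\rfloor$ independent stretches of length about $4/(1-q)$ (the paper works with disjoint time intervals of the Mallows process rather than with Corollary~\ref{cor:induced_permutation_Mallows} and Lemma~\ref{lem:indep-seqs}, but this is equivalent), and your target rate $\left(c/((1-q)L^2)\right)^L$ per block is the right one. The genuine gap is exactly the issue you flag and then leave open: with position-windows $W_1<\cdots<W_L$ of width and spacing both of order $\frac{1}{(1-q)L}$, and with the $L$ deposits allowed to occur at arbitrary times within a block of length $\sim\frac{4}{1-q}$, the construction does not produce a monotone sequence. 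By \eqref{eq:Mallows_iteration_procedure} the only general bound on the upward drift of an already-deposited element is the elapsed time, which between two consecutive successes can be as large as $\sim\frac{4}{1-q}$, i.e.\ larger than your gaps by a factor of order $L$; even probabilistically, an element sitting near position $\frac{3s}{(1-q)L}$ is pushed at rate about $\frac{s}{L}$ per step by \eqref{eq:Mallows_folder_dist}, so over inter-success times of order $\frac{1}{(1-q)L}$ the expected drift for the higher windows is already comparable to the gap. Moreover, the proposed patch (``an additional binomial-tail estimate on the number of push-events'') is delicate because the push events and the success events are events about the same variables $p_t(t)$, and you are conditioning on a large-deviation event (for $L\gg(1-q)^{-1/2}$ the probability $\left(c/((1-q)L^2)\right)^L$ is exponentially small), so the conditional law of the inter-success times and of the pushes is not the unconditional one. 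In short, the drift control is not a routine add-on; it is the heart of the proof, and your sketch does not contain it.

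The paper's resolution is deterministic rather than probabilistic: for each position-window $O_k$ it prescribes a matching \emph{time}-window $I_{j,k}$ of length $\frac{1}{(1-q)L}$, consecutive in $k$, and asks that some element arriving during $I_{j,k}$ be deposited in $O_k$. Then the elapsed time between the $k$-th and $(k+1)$-st required deposits is at most $\max(I_{j,k+1})-\min(I_{j,k})<\frac{2}{(1-q)L}$, which is no more than the spacing between $\max(O_k)$ and $\min(O_{k+1})$, so by \eqref{eq:Mallows_iteration_procedure} the deposited elements keep their relative order up to time $n$ with no further estimate needed. Each joint event has probability at least $c\,|I_{j,k}|/L\ge \frac{c}{(1-q)L^2}$, and since the time-windows are disjoint the events are independent over $k$, giving $\left(\frac{c}{(1-q)L^2}\right)^L$ per block directly (no appeal to \eqref{eq:bin_ld_lower_bound_for_upper_tail}); the $N=\lfloor n(1-q)/4\rfloor$ disjoint blocks then give the outer exponent. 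If you want to salvage your version, you would have to constrain the deposit times in essentially this way; as written, the proposal is incomplete at its crucial step.
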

\begin{proof}
Fix an integer $L$ satisfying
\eqref{eq:conditions_on_L_LDS_lower_bound} with the constant $C$
large enough and the constant $c$ small enough for the following
calculations. Using Corollary~\ref{cor:perms-that-are-mallows}, we
couple $\pi$ with the $q$-Mallows process so that
\begin{equation}\label{eq:pi_p_coupling_LDS_LB}
  \pi(i) = n+1 - p_n(i)\quad\text{ for all $1\le i\le n$}.
\end{equation}
For $1\le k\le L$, define the set of integers
\begin{equation*}
O_k:=\left[1+\frac{3(k-1)}{(1-q)L},\,
1+\frac{3(k-1)+1}{(1-q)L}\right] \cap \mathbb{Z}.
\end{equation*}
Observe that
\begin{equation}\label{eq:lower_bound_O_k}
  |O_k|\ge \left\lfloor\frac{1}{(1-q)L}\right\rfloor \ge 1
\end{equation}
by \eqref{eq:conditions_on_L_LDS_lower_bound}. Let
\begin{equation*}
  N:=\left\lfloor\frac{n(1-q)}{4}\right\rfloor
\end{equation*}
and observe that $N\ge 1$ by our assumption on $q$. For $1\le j\le
N$ and $1\le k\le L$ define the set of integers and the event
\begin{align*}
  I_{j,k}&:=\left[\frac{j+2}{1-q} + \frac{k-1}{(1-q)L},\, \frac{j + 2}{1-q} + \frac{k}{(1-q)L}\right)\cap\mathbb{Z},\\
  E_{j,k}&:=\{\exists\, i\in I_{j,k}\text{ such that } p_i(i)\in
  O_k\}.
\end{align*}
Observe that $\max_{j,k} (\max(I_{j,k}))\le n$ by our assumption on
$q$. Our strategy for proving a lower bound for $\p(\LDS(\pi)\ge L)$
is based on the following containment of events,
\begin{equation}\label{eq:LDS_LB_containment_of_events}
  \{\LDS(\pi)\ge L\}\supseteq \cup_{j=1}^N \cap_{k=1}^L E_{j,k}.
\end{equation}
Let us prove this relation. Suppose that $\cap_{k=1}^L E_{j,k}$
occurs for some $1\le j\le N$. For each $k$, let $i_{j,k}\in
I_{j,k}$ be such that $p_{i_{j,k}}(i_{j,k})\in O_k$. For each $1\le
k\le L-1$ we have by \eqref{eq:Mallows_iteration_procedure} that
\begin{align*}
  p_{i_{j,k+1}}(i_{j,k})&\le p_{i_{j,k}}(i_{j,k}) + i_{j,k+1} - i_{j,k}
  \le \max(O_k) + \max(I_{j,k+1}) - \min(I_{j,k}) <\\
  &< 1+\frac{3(k-1)+1}{(1-q)L} + \frac{2}{(1-q)L} \le \min(O_{k+1}) \le
  p_{i_{j,k+1}}(i_{j,k+1}).
\end{align*}
This implies, again by \eqref{eq:Mallows_iteration_procedure}, that
$p_n(i_{j,k})<p_n(i_{j,k+1})$ and hence, by
\eqref{eq:pi_p_coupling_LDS_LB}, that $\pi(i_{j,k})>\pi(i_{j,k+1})$.
Thus the event $\{\LDS(\pi)\ge L\}$ occurs.

We continue to establish a lower bound for the probability of the
event on the right-hand side of
\eqref{eq:LDS_LB_containment_of_events}. Observe that the sets
$(I_{j,k})$ are pairwise-disjoint. Hence, since the random variables
$(p_i(i))$ are independent, we have
\begin{equation}\label{eq:LDS_pi_lower_bound_with_E_j_k}
  \P(\LDS(\pi)\ge L) \ge \P\left(\cup_{j=1}^N \cap_{k=1}^L E_{j,k}\right) = 1 -
  \prod_{j=1}^N \P\left(\cup_{k=1}^L E_{j,k}^c\right) = 1 - \prod_{j=1}^N \left(1 -
  \prod_{k=1}^L \P(E_{j,k})\right).
\end{equation}
Now, to estimate $\P(E_{j,k})$, observe first that $\max_k
(\max(O_k))\le \frac{3}{1-q}\le \min_{j,k} (\min(I_{j,k}))$ by
\eqref{eq:conditions_on_L_LDS_lower_bound}. In addition, it follows
from our assumption that $q\ge \frac{1}{2}$ that $\min_{m\in O_k} (
q^{m-1})\ge c>0$. Thus, by \eqref{eq:Mallows_folder_dist} and
\eqref{eq:lower_bound_O_k}, for each $j$ and $k$,
\begin{align*}
  \P(E_{j,k}) &= \P(\cup_{i\in I_{j,k}} \{p_i(i)\in O_k\}) = 1 -
  \prod_{i\in I_{j,k}} \left(1 - \p(p_i(i)\in O_k)\right) = 1 -
  \prod_{i\in I_{j,k}} \left(1 - \frac{(1-q)\sum_{m\in O_k} q^{m-1}}{1-q^i}\right)
  \ge\\
  &\ge
  1 - \prod_{i\in I_{j,k}} \left(1 - c(1-q)|O_k|\right) \ge 1 - \prod_{i\in I_{j,k}} \left(1 - \frac{c}{L}\right) = 1 -
  \left(1 - \frac{c}{L}\right)^{|I_{j,k}|},
\end{align*}
and, since $\max_{j,k}|I_{j,k}|\le
\left\lceil\frac{1}{(1-q)L}\right\rceil\le CL$ and
$\min_{j,k}|I_{j,k}|\ge \left\lfloor\frac{1}{(1-q)L}\right\rfloor\ge
1$ by \eqref{eq:conditions_on_L_LDS_lower_bound}, we may continue
the last inequality to obtain
\begin{equation*}
  \P(E_{j,k}) \ge \frac{c|I_{j,k}|}{L} \ge \frac{c}{(1-q)L^2}.
\end{equation*}
Plugging this estimate into \eqref{eq:LDS_pi_lower_bound_with_E_j_k}
finishes the proof of the proposition.
\end{proof}

We now prove our second bound, which applies to all $L$. The
strategy in this bound is to simply look for a decreasing
subsequence composed of \emph{consecutive} elements.
\begin{proposition}
Let $n\ge 1$, $0< q< 1$ and $\pi\sim\mu_{n,q}$. Then for all integer
$L\ge 2$,
\[
\p(\LDS(\pi) \ge L) \ge 1 - \left(1 - q^{\frac{L(L-1)}{2}}
(1-q)^L\right)^{\lfloor \frac{n}{L}\rfloor}.
\]
\end{proposition}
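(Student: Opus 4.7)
The plan is to exhibit a decreasing subsequence of length $L$ formed entirely of \emph{consecutive} indices and then use independence across disjoint blocks of consecutive indices. Partition the index set $\{1,\ldots,n\}$ into $\lfloor n/L \rfloor$ disjoint contiguous blocks $B_1,\ldots,B_{\lfloor n/L\rfloor}$, each of size exactly $L$ (discarding any leftover indices at the end). For each block $B_j$, let $E_j$ denote the event that $\pi$ restricted to $B_j$ is strictly decreasing, i.e.\ $\pi_{B_j}$ equals the reverse identity permutation in $S_L$. If any $E_j$ occurs then $\LDS(\pi) \ge L$, so
\begin{equation*}
  \P(\LDS(\pi) \ge L) \ge \P\!\left(\bigcup_{j=1}^{\lfloor n/L\rfloor} E_j\right).
\end{equation*}

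By Lemma~\ref{lem:indep-seqs}, the induced permutations $\pi_{B_1},\ldots,\pi_{B_{\lfloor n/L\rfloor}}$ are independent, so the events $(E_j)$ are independent. Moreover, by Corollary~\ref{cor:induced_permutation_Mallows}, each $\pi_{B_j}$ is distributed as $\mu_{L,q}$. The reverse identity has exactly $\binom{L}{2} = L(L-1)/2$ inversions, so by \eqref{eq:mu_n_q_def} and \eqref{eq:Z_formula},
\begin{equation*}
  \P(E_j) = \frac{q^{L(L-1)/2}}{Z_{L,q}} = q^{L(L-1)/2}\cdot(1-q)^L \prod_{i=1}^{L}\frac{1}{1-q^i} \ge q^{L(L-1)/2}(1-q)^L,
\end{equation*}
where the inequality uses $1-q^i \le 1$ for each $i\ge 1$.

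Combining independence with this lower bound on each $\P(E_j)$,
\begin{equation*}
  \P\!\left(\bigcup_{j=1}^{\lfloor n/L\rfloor} E_j\right) = 1 - \prod_{j=1}^{\lfloor n/L\rfloor}\bigl(1-\P(E_j)\bigr) \ge 1 - \left(1 - q^{L(L-1)/2}(1-q)^L\right)^{\lfloor n/L\rfloor},
\end{equation*}
which is the desired bound. There is no real obstacle here: the only mild point to verify is the $1/Z_{L,q}\ge (1-q)^L$ inequality, which is immediate from the product formula for $Z_{L,q}$.
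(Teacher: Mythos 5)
Your proof is correct and is essentially identical to the paper's: the same partition into $\lfloor n/L\rfloor$ consecutive blocks of size $L$, the same events that each block is the reversed identity, independence via Lemma~\ref{lem:indep-seqs}, the $\mu_{L,q}$ distribution of each block via Corollary~\ref{cor:induced_permutation_Mallows}, and the same bound $1/Z_{L,q}\ge(1-q)^L$. Nothing further is needed.
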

\begin{proof}
Let $N:=\left\lfloor\frac{n}{L}\right\rfloor$ and define the sets
$I_i:=\{1+(i-1)L,\, 2+(i-1)L,\ldots, iL\}$ for $1\le i\le N$. Define
the events
\begin{equation*}
  E_i:=\{\pi_{I_i}\text{ is the reversed identity}\},\quad(1\le i\le
  N).
\end{equation*}
Then we have the following containment of events,
\begin{equation*}
  \{\LDS(\pi)\ge L\}\supseteq\cup_{i=1}^N E_i.
\end{equation*}
The events $(E_i)$ are independent by Lemma~\ref{lem:indep-seqs},
and have the same probability by
Corollary~\ref{cor:induced_permutation_Mallows}. Hence,
\begin{equation}\label{eq:LDS_pi_lower_bound_with_E_i}
  \P(\LDS(\pi)\ge L)\ge \P[\cup_{i=1}^N E_i] = 1 - \P(\cap_{i=1}^N
  E_i^c) = 1 - \prod_{i=1}^N (1 - \P(E_i)) = 1 - (1 - \p(E_1))^N.
\end{equation}
Since the reversed identity permutation on $L$ elements has
$L(L-1)/2$ inversions, we conclude by
Corollary~\ref{cor:induced_permutation_Mallows},
\eqref{eq:mu_n_q_def} and \eqref{eq:Z_formula} that
\begin{equation*}
  \P(E_1) = \frac{q^{\frac{L(L-1)}{2}}}{Z_{L,q}} =
  q^{\frac{L(L-1)}{2}}(1-q)^L\prod_{i=1}^L\frac{1}{1-q^i}\ge
  q^{\frac{L(L-1)}{2}}(1-q)^L.
\end{equation*}
Plugging this estimate into \eqref{eq:LDS_pi_lower_bound_with_E_i}
finishes the proof of the proposition.
\end{proof}

\subsection{Upper bound on $\P(\LDS(\pi) <
L)$}\label{sec:erdos-szekeres}

In this section use a classical combinatorial result of Erd\"{o}s
and Szekeres to show that $\LDS(\pi)$ is not likely to be very
small, proving the bound~\eqref{eq:LDS_less_than_L} of
Theorem~\ref{thm:lds-largedev}. The following well-known theorem is
a consequence of the pigeonhole principle.
\begin{theorem}[Erd\"{o}s-Szekeres] \label{thm:Erd-Sze}Let $r,s\ge 1$ be any
  integers such that
  $n>(r-1)(s-1)$. Then a permutation of length $n$ contains either an
  increasing subsequence of length $r$ or a decreasing subsequence of
  length $s$.
\end{theorem}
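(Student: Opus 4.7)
The plan is to give the classical pigeonhole argument due to Seidenberg. Let $\pi$ be a permutation of length $n$ with $n > (r-1)(s-1)$. For each position $i \in \{1,\ldots,n\}$, I will associate a pair of positive integers $(a_i, b_i)$ where $a_i$ is the length of a longest increasing subsequence of $\pi$ ending at position $i$ and $b_i$ is the length of a longest decreasing subsequence of $\pi$ ending at position $i$. The whole argument hinges on showing these pairs are pairwise distinct.

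First I would verify the injectivity claim: if $i < j$, then $(a_i, b_i) \ne (a_j, b_j)$. Consider the two cases based on the relative order of $\pi(i)$ and $\pi(j)$. If $\pi(i) < \pi(j)$, then appending position $j$ to any longest increasing subsequence ending at $i$ yields an increasing subsequence of length $a_i + 1$ ending at $j$, whence $a_j \ge a_i + 1 > a_i$. Symmetrically, if $\pi(i) > \pi(j)$, then $b_j \ge b_i + 1 > b_i$. Either way the pairs differ in at least one coordinate.

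Next I would complete the pigeonhole step. Suppose, for contradiction, that $\pi$ contains no increasing subsequence of length $r$ and no decreasing subsequence of length $s$. Then every $a_i$ lies in $\{1, 2, \ldots, r-1\}$ and every $b_i$ lies in $\{1, 2, \ldots, s-1\}$, so each pair $(a_i, b_i)$ is one of only $(r-1)(s-1)$ possibilities. Since the map $i \mapsto (a_i, b_i)$ is injective by the previous step, this forces $n \le (r-1)(s-1)$, contradicting the hypothesis.

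I do not expect any real obstacle here; the only subtle point is the injectivity verification, which is a one-line case analysis. The argument is entirely deterministic and does not rely on any of the probabilistic machinery from the earlier sections. The statement will be invoked later to deduce that a random Mallows permutation whose longest increasing subsequence is forced to be short must contain a reasonably long decreasing subsequence, but that application is separate from the proof of the theorem itself.
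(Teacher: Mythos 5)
Your proof is correct and is precisely the standard Seidenberg pigeonhole argument that the paper alludes to when it states the theorem ``is a consequence of the pigeonhole principle'' (the paper itself gives no further details, treating the result as classical). The injectivity of $i \mapsto (a_i,b_i)$ and the subsequent counting are both carried out correctly, so there is nothing to add.
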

The theorem allows us to translate the large deviation bound on
$\LIS(\pi)$ given by the upper bound of
\eqref{eq:LIS_greater_than_L} into an upper bound on the probability
that $\LDS(\pi)$ is very small.
\begin{proposition} \label{thm:lower-bnd-lds}
There are absolute constants $C,c>0$ for which, if $n\ge 1$,
$\frac{1}{2} \le q \le 1-\frac{4}{n}$ and $\pi \sim \mu_{n,q}$, then
for all integer $2\le L<\frac{c}{\sqrt{1-q}}$,
\[
\p(\LDS(\pi)<L) \le \left(C(1-q)L^2\right)^{\frac{n}{L}}.
\]
\end{proposition}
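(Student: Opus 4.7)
My plan is to apply Theorem~\ref{thm:Erd-Sze} (Erd\H{o}s--Szekeres) directly to the whole permutation $\pi$ in order to reduce an upper bound on $\P(\LDS(\pi) < L)$ to an upper bound on $\P(\LIS(\pi) \ge r)$ for $r \approx n/L$, and then to invoke the large-deviation bound on $\LIS$ from Proposition~\ref{prop:lis-ub}. No block decomposition is needed: the bound from Proposition~\ref{prop:lis-ub} applied to $\pi$ itself is already sharp enough, because its dependence on the permutation size enters only through the ratio $n^2/r^2$, which in our application will be $\le L^2$.

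Concretely, let $C_*$ denote the absolute constant from Proposition~\ref{prop:lis-ub}, and choose the constant $c$ in the present statement small enough that $c \le 1/C_*$ and $C_* c^2 \le 1$ both hold. Set $r := \lceil n/L \rceil$, so that $r \ge n/L$ and
\[
(r-1)(L-1) \le \tfrac{n}{L}(L-1) = n - \tfrac{n}{L} < n.
\]
By Theorem~\ref{thm:Erd-Sze} every permutation of $[n]$ contains either an increasing subsequence of length $r$ or a decreasing subsequence of length $L$. In particular $\{\LDS(\pi) < L\} \subseteq \{\LIS(\pi) \ge r\}$, whence $\P(\LDS(\pi) < L) \le \P(\LIS(\pi) \ge r)$. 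I now apply Proposition~\ref{prop:lis-ub}: its hypothesis $q \le 1 - 2/n$ follows from $q \le 1 - 4/n$, and the threshold condition $r \ge C_* n\sqrt{1-q}$ follows from $r \ge n/L$ together with $L \le c/\sqrt{1-q} \le 1/(C_* \sqrt{1-q})$. Proposition~\ref{prop:lis-ub} therefore yields
\[
\P(\LIS(\pi) \ge r) \le \Big(\tfrac{C_*(1-q)n^2}{r^2}\Big)^{r} \le \bigl(C_* L^2 (1-q)\bigr)^{r},
\]
using $n^2/r^2 \le L^2$ in the second inequality.

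It remains to replace the exponent $r$ by $n/L$. The choice of $c$ gives $C_* L^2 (1-q) \le C_* c^2 \le 1$, and since $r \ge n/L$, raising a base in $[0,1]$ to the larger exponent only makes it smaller:
\[
(C_* L^2 (1-q))^{r} \le (C_* L^2 (1-q))^{n/L}.
\]
This establishes the proposition with $C := C_*$. The only ``obstacle'' is purely bookkeeping: one must choose a single $c>0$ for which all three of the above conditions $(L \le 1/(C_*\sqrt{1-q})$, $C_* L^2 (1-q) \le 1$, and the constants in the hypotheses of Proposition~\ref{prop:lis-ub}$)$ are met simultaneously, but any sufficiently small $c$ works. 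Note that, as a by-product, the proof automatically accommodates small $L$ (including $L = 2$), where a naive block-decomposition argument over blocks of size $L$ would break down since Proposition~\ref{prop:lis-ub} requires the permutation size to exceed $2/(1-q)$.
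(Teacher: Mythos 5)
Your proposal is correct and follows essentially the same route as the paper: Erd\H{o}s--Szekeres reduces $\P(\LDS(\pi)<L)$ to the upper tail $\P(\LIS(\pi)\ge r)$ with $r\approx n/L$, which is then controlled by the upper bound of \eqref{eq:LIS_greater_than_L} (Proposition~\ref{prop:lis-ub}). The only cosmetic differences are that the paper takes $r=\lceil n/(L-1)\rceil$ and absorbs the case of a base exceeding $1$ via a $\min(1,\cdot)$, whereas you take $r=\lceil n/L\rceil$ and force the base below $1$ through the choice of $c$; both are equivalent bookkeeping.
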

\begin{proof}
  By Theorem~\ref{thm:Erd-Sze}, for any integer $L\ge 2$,
  $\{\LDS(\pi)<L\}\subseteq\{\LIS(\pi)\ge\lceil\frac{n}{L-1}\rceil\}$.
  If, in addition, $L<\frac{c}{\sqrt{1-q}}$, we may apply the upper bound of \eqref{eq:LIS_greater_than_L}
  to obtain
  \begin{equation*}
    \P(\LDS(\pi)<L)\le \P\left(\LIS(\pi)\ge \left\lceil\frac{n}{L-1}\right\rceil\right)\le
    \min\Bigg(1, \left(\frac{C(1-q)n^2}{\lceil\frac{n}{L-1}\rceil^2}
    \right)^{\lceil\frac{n}{L-1}\rceil}\Bigg)\le
    \left(C(1-q)L^2\right)^{\frac{n}{L}}.\qedhere
  \end{equation*}
\end{proof}

It is possible to use Theorem~\ref{thm:Erd-Sze} in the other
direction as well, to prove upper bounds for $\P(\LIS(\pi)<L)$ via
upper bounds on $\P(\LDS(\pi)\ge L)$. For certain ranges of $n,q$
and $L$ this provides an improvement over
\eqref{eq:LIS_less_than_L_UB}. For instance, when $q =
1-\frac{4}{n}$ and $L=4$, the bound \eqref{eq:LIS_less_than_L_UB}
shows that $\P(\LIS(\pi)<4)\le e^{-cn}$, whereas
Theorem~\ref{thm:Erd-Sze} and the
bound~\eqref{eq:LDS_greater_than_L} show that $\P(\LIS(\pi)<4)\le
(C/n)^{cn}$.
We do not pursue a systematic study
of the ranges in which each of the bounds is optimal, nor do we
prove a matching lower bound for $\P(\LIS(\pi)<L)$ here. We direct
the reader to Section~\ref{sec:open-ques} for a discussion of these
open problems.

%
%

\subsection{Bounds for $\E(\LDS(\pi))$}\label{sec:E(LDS)}
In this section we prove Theorem~\ref{thm:E(LDS)}. The proof
requires also Proposition~\ref{prop:identity_for_small_q} which we
now establish.
\begin{proof}[Proof of Proposition~\ref{prop:identity_for_small_q}]
Fix $n\ge 2$ and $0<q\le\frac{1}{n}$. Since $1-x\le \exp(-x)$ for
all $x$ and $1-x\ge\exp(-Cx)$ for $0<x\le\frac{1}{2}$,
\begin{align*}
  \max(c, 1-Cnq)\le &(1-q)^n\le 1-cnq,\\
  1-Cq\le &\prod_{i=1}^n(1-q^i) \le 1.
\end{align*}
Now, letting $\pi\sim\mu_{n,q}$, \eqref{eq:mu_n_q_def} and
\eqref{eq:Z_formula} show that
\begin{equation*}
  \P(\pi\text{ is not the identity}) = 1 -
  \frac{(1-q)^n}{\prod_{i=1}^n(1-q^i)} \le 1 - (1-q)^n\le Cnq,
\end{equation*}
and, if $n$ is sufficiently large,
\begin{equation*}
  \P(\pi\text{ is not the identity}) = 1 -
  \frac{(1-q)^n}{\prod_{i=1}^n(1-q^i)} \ge 1 - \frac{1-cnq}{1-Cq}\ge
  cnq.
\end{equation*}
To obtain the lower bound for small $n$, let $\sigma\in S_n$ be any
permutation with $\inv(\sigma)=1$ (here we assume $n\ge 2$). Then,
by \eqref{eq:mu_n_q_def} and \eqref{eq:Z_formula},
\begin{equation*}
  \P(\pi = \sigma) = \frac{q(1-q)^n}{\prod_{i=1}^n (1-q^i)} \ge
  c q.\qedhere
\end{equation*}
\end{proof}
We now establish Theorem \ref{thm:E(LDS)} using the large deviation
inequalities proved above. We consider separately several different
regimes depending on the relative sizes of $q$ and $n$.
\begin{proof}[Proof of Theorem \ref{thm:E(LDS)}]
The constants $C_0, c_0, c_1$ appearing in the proof below are fixed
positive constants with $C_0$ taken large enough for our
calculations and $c_0,c_1$ taken small enough for our calculations.
Also, we will assume throughout the proof of
\eqref{eq:E_LDS_large_q} that $n\ge C$ for some constant $C$,
sufficiently large for our calculations. This is without loss of
generality since the theorem bounds $\E(\LDS(\pi))$ up to constants,
and we may always adjust these constants so that
\eqref{eq:E_LDS_large_q} applies also to the case $n\le C$.

\begin{enumerate}[{\em (i)}]
\item Suppose $1-\frac{C_0}{(\log n)^2}\le q\le1-\frac{4}{n}$.

Let $L^* := \frac{c}{\sqrt{1-q}}$, for a sufficiently small $c$.
Then, by \eqref{eq:LDS_less_than_L},
\begin{align}
\E(\LDS(\pi)) \ge L^*\p(\LDS(\pi) \ge L^*)  = L^*
\left(1-\p\left(\LDS(\pi) < \lceil L^* \rceil\right)\right)  \ge L^*
\left(1-\left(C_{13}(1-q)\lceil L^*\rceil^2 \right)^{n/\lceil
L^*\rceil}\right) \ge \frac{L^*}{2}, \nonumber
\end{align}
where $C_{13}$ is the constant $C$ appearing in
\eqref{eq:LDS_less_than_L}. Now let $L^* := \frac{C}{\sqrt{1-q}}$
where $C$ is chosen large enough so that, using the lower bound on
$q$, $L^* \geq 9\log_2 n$. Therefore, by the first bound of
\eqref{eq:LDS_greater_than_L},
\begin{align}
\E(\LDS(\pi)) \le L^* + n\p(\LDS(\pi) > L^*) \le L^* + n\p(\LDS(\pi)
\ge \lceil L^* \rceil) \le L^*+ n^9 \left(\frac{1}{2}\right)^{L^*}
\le L^*+1. \nonumber
\end{align}

\item Suppose $1-\frac{c_0\log\log n}{\log n }\le q\le 1-\frac{C_0}{(\log
n)^2}$. Note that this is only part of the range of $q$'s in the
second part of the theorem. The other part will be treated later.

Let $L^* := \frac{c \log n}{\log((1-q) (\log n)^2)}$ for a
sufficiently small $c$. We claim that
\begin{equation}\label{eq:L_star_restrictions}
  \frac{C_{12}}{\sqrt{1-q}} \le L^* \le \frac{1}{2(1-q)}
\end{equation}
where $C_{12}$ is the constant $C$ appearing in the first part of
inequality \eqref{eq:LDS_greater_than_L_lb}. To see this, observe
that $L^*\ge\frac{C_{12}}{\sqrt{1-q}}$ is equivalent to
\begin{equation*}
  c\sqrt{(1-q)(\log n)^2}\ge C_{12}\log((1-q)(\log n)^2),
\end{equation*}
which holds when $(1-q)(\log n)^2$ is at least a sufficiently large
constant. This follows from the upper bound on $q$ by taking $C_0$
large enough. Similarly, observe that $L^*\le\frac{1}{2(1-q)}$ is
equivalent to
\begin{equation*}
  2c(1-q)(\log n)^2\le\log((1-q)(\log n)^2) \log n,
\end{equation*}
which holds when $e\le (1-q)(\log n)^2\le \frac{1}{2c}\log
n\cdot\log \log n$, which follows from our restrictions on $q$ by
taking $C_0$ large enough and $c_0$ small enough. This establishes
\eqref{eq:L_star_restrictions}.

Next, we claim that
\begin{equation}\label{eq:LDS_lower_bound_half}
\P(\LDS(\pi)\ge L^*)= \P(\LDS(\pi)\ge \lceil L^* \rceil ) \ge
\frac{1}{2}.
\end{equation}
Observing that \eqref{eq:L_star_restrictions} implies that
$\frac{C_{12}}{\sqrt{1-q}}\le \lceil L^*\rceil\le \frac{1}{1-q}$,
\eqref{eq:LDS_lower_bound_half} will follow from the first part of
\eqref{eq:LDS_greater_than_L_lb} if we show that
\begin{equation*}
 \left\lfloor \frac{n(1-q)}{4}\right\rfloor \left(\frac{c_{12}}{(1-q)\lceil L^* \rceil^2}\right)^{\lceil L^* \rceil} \ge \log 2,
\end{equation*}
where $c_{12}$ is the constant $c$ appearing in the first part of
\eqref{eq:LDS_greater_than_L_lb}. 
Recalling our bounds on $q$, it suffices to show that
\begin{equation}\label{eq:LDS_lower_bound_half_reduction}
  \frac{n}{(\log n)^2} \exp\left(-\lceil L^*\rceil\log\left(\frac{(1-q)\lceil L^*\rceil^2}{c_{12}}\right)\right)\ge \frac{8\log 2}{C_0}.
\end{equation}
Now, taking the constant in the definition of $L^*$ small enough, we
have $(1-q)\lceil L^*\rceil^2/c_{12}\le (1-q)(\log n)^2$. Therefore,
again taking the constant in the definition of $L^*$ small enough,
$\lceil L^*\rceil \log((1-q)\lceil L^*\rceil^2/c_{12})\le
\frac{1}{2}\log n$. This establishes
\eqref{eq:LDS_lower_bound_half_reduction} and hence
\eqref{eq:LDS_lower_bound_half}. Finally,
\eqref{eq:LDS_lower_bound_half} implies that
\begin{equation*}
  \E(\LDS(\pi)) \ge L^* \p(\LDS(\pi) \ge L^*) \ge \frac{L^*}{2}.
\end{equation*}

Now let $L^* := \frac{C \log n}{\log((1-q) (\log n)^2)}$ for a
sufficiently large $C$. As in the proof of
\eqref{eq:L_star_restrictions}, also in this case we have $\lceil
L^*\rceil\le \frac{3}{1-q}$ if the constant $C_0$ is large enough
and the constant $c_0$ is small enough. We also have $L^*\ge 2$ by
our restrictions on $q$ and by taking the constant $C$ large enough.
Hence we may apply the first bound of \eqref{eq:LDS_greater_than_L}
and obtain the bound below, taking $C$ to be large enough
\begin{equation}\label{eq:bound}
  \P(\LDS(\pi)\ge L^*)= \P(\LDS(\pi)\ge \lceil L^* \rceil)\le n^8\left(\frac{C_{10}}{(1-q)\lceil
  L^*\rceil^2}\right)^{\lceil
  L^*\rceil},
\end{equation}
where $C_{10}$ is the constant $C$ from
\eqref{eq:LDS_greater_than_L}.
We claim that the right-hand side of \eqref{eq:bound} is at most
$\frac{1}{n}$ if the constant in the definition of $L^*$ is taken
large enough. Equivalently,
\begin{equation*}
  \lceil L^*\rceil\log((1-q)\lceil L^*\rceil^2 / C_{10})\ge 9\log n.
\end{equation*}
For this, substituting the definition of $L^*$ with a large enough
constant, it suffices to show that
\begin{equation*}
  (1-q)\lceil L^*\rceil^2 / C_{10}\ge \left((1-q)(\log n)^2\right)^{\frac{1}{2}}.
\end{equation*}
We now substitute the definition of $L^*$ in the left-hand side.
Again taking the constant $C$ large enough, the inequality reduces
to showing
\begin{equation*}
  \frac{(1-q)(\log n)^2}{(\log((1-q)(\log n)^2))^2} \ge \left((1-q)(\log
  n)^2\right)^{\frac{1}{2}}.
\end{equation*}
Denoting $y:=(1-q)(\log n)^2$, we may rewrite this as
\begin{equation*}
  y^{\frac{1}{2}}\ge (\log y)^2.
\end{equation*}
This inequality is satisfied whenever $y$ is sufficiently large, and
this condition is assured in our setting by choosing the constant
$C_0$ in the upper bound on $q$ large enough.

Finally, we conclude that
\begin{equation*}
  \E(\LDS(\pi))\le L^* + n\P(\LDS(\pi)\ge L^*) \le L^* + 1.
\end{equation*}

\item Suppose $1 - \frac{c_1(\log\log n)^2}{\log n}\le q\le 1-\frac{c_0\log\log n}{\log n}$. Continuing the previous item, the second part of the theorem
will follow by showing that for this range of $q$'s,
$\E(\LDS(\pi))\approx \frac{\log n}{\log\log n}$. Note that the
assumptions on $q$ imply that for some constants $C(c_0), c(c_1),
C(c_1)>0$ we have
\begin{align}
  c(c_1)\log \log n\le \log&\left(\frac{1}{1-q}\right)\le C(c_0)\log\log
  n,\label{eq:log_1_over_1_q_inequality}\\
  e^{-C(c_1)(1-q)}\le &q\le
  e^{-(1-q)}.\label{eq:q_inequality_LDS_bounds}
\end{align}

Let $L^* := \frac{c \log n}{\log\log n}$ for a sufficiently small
$c$. We take $n$ sufficiently large compared to $c$ so that $L^*\ge
2$. By the second part of \eqref{eq:LDS_greater_than_L_lb},
\begin{align}
  \E(\LDS(\pi))&\ge L^*\P(\LDS(\pi)\ge \lceil L^* \rceil)\ge L^*\left(1-\left(1-q^{\frac{\lceil L^* \rceil(\lceil L^* \rceil-1)}{2}}
  (1-q)^{\lceil L^* \rceil}\right)^{\left\lfloor\frac{n}{\lceil L^* \rceil}\right\rfloor}\right).\label{eq:LDS_third_part_LB}
\end{align}
Applying \eqref{eq:log_1_over_1_q_inequality} and
\eqref{eq:q_inequality_LDS_bounds}, recalling our assumptions on $q$
and taking $c$ small enough, we have
\begin{align*}
  q^{\frac{\lceil L^* \rceil (\lceil L^* \rceil-1)}{2}}(1-q)^{\lceil L^* \rceil}&\ge
  \exp\left(-C(c_1)(1-q)\frac{\lceil L^* \rceil(\lceil L^* \rceil-1)}{2}-C(c_0)\lceil L^* \rceil\log\log n
  \right) \ge  \\ & \ge
  \exp\left(-C(c_1)(1-q)(L^*)^2-2C(c_0)L^*\log\log n
  \right) \ge\\
  &\ge \exp\left(-\frac{1}{2}\log n\right) = \frac{1}{\sqrt{n}}.
\end{align*}
Substituting into \eqref{eq:LDS_third_part_LB} shows that
\begin{equation*}
  \E(\LDS(\pi))\ge L^{*}\left(1
  - \left(1 - \frac{1}{\sqrt n}\right)^{\left\lfloor
  \frac{n}{L^*}\right\rfloor}\right) \ge
  \frac{L^*}{2}.
\end{equation*}
%
%

Now, let $L^* := \frac{C \log n}{\log\log n}$ for a sufficiently
large $C$. Applying \eqref{eq:log_1_over_1_q_inequality},
\begin{equation*}
q^{\frac{\lceil L^* \rceil(\lceil L^* \rceil-1)}{2}}(1-q)^{\lceil
L^* \rceil}\le
  \exp\left(-c(c_1)\lceil L^* \rceil \log\log n
  \right) \le \exp\left(-c(c_1)L^*\log\log n
  \right) \le \frac{1}{n^{10}}.
\end{equation*}
For our choice of $L^*$ we have $L^*>\frac{3}{1-q}$ by the upper
bound on $q$. Thus, using the second part of
\eqref{eq:LDS_greater_than_L},
\begin{equation}\label{eq:LDS_third_part_UB}
  \E(\LDS(\pi))\le L^* + n\P(\LDS(\pi)\ge \lceil L^{*}\rceil)\le L^* +
  n^9(C_{10}(1-q))^{\lceil L^* \rceil}
  q^{\frac{\lceil L^* \rceil(\lceil L^* \rceil-1)}{2}}\le L^* + \frac{(C_{10})^{\lceil L^*\rceil}}{n}\le L^* + 1.
\end{equation}

%
%
%
%
%
%

\item Let $\frac{1}{n} \le q \le 1-c_1\frac{(\log\log n)^2}{\log n}$. In
this regime we have for an appropriate $C(c_1)>0$,
\begin{align*}
  &\log\left(\frac{1}{1-q}\right)\le C(c_1)\log\log
  n,\\
  &\log(1/q)\ge 1-q\ge c_1\frac{(\log \log n)^2}{\log n}.
\end{align*}

Let $L^* := c\sqrt{\frac{\log n}{\log(1/q)}}$ for a sufficiently
small $c$. If $L^*<2$ then, trivially,
\begin{equation*}
  \E(\LDS(\pi))\ge 1> \frac{1}{2}L^*.
\end{equation*}
Otherwise, assume that $L^*\ge 2$. Then, as in
\eqref{eq:LDS_third_part_LB},
\begin{align}
  \E(\LDS(\pi))&\ge L^*\P(\LDS(\pi)\ge \lceil L^* \rceil)\ge L^*\left(1-\left(1-q^{\frac{\lceil L^* \rceil(\lceil L^* \rceil-1)}{2}}
  (1-q)^{\lceil L^* \rceil}\right)^{\left\lfloor\frac{n}{\lceil L^* \rceil}\right\rfloor}\right).\label{eq:LDS_fourth_part_LB}
\end{align}
We may estimate the term on the right-hand side as
\begin{align*}
  q^{\frac{\lceil L^* \rceil (\lceil L^* \rceil-1)}{2}}(1-q)^{\lceil L^* \rceil} & \ge
  \exp\left(-\log\left(\frac{1}{q}\right)\frac{\lceil L^* \rceil(\lceil L^* \rceil-1)}{2}-C(c_1)\lceil L^* \rceil\log\log n
  \right) \ge \\
  & \ge \exp\left(-\log\left(\frac{1}{q}\right)(L^*)^2-2C(c_1)L^* \log\log n
  \right) \ge \exp(-\frac{1}{2}\log n) = \frac{1}{\sqrt{n}}.
\end{align*}
Plugging into \eqref{eq:LDS_fourth_part_LB} implies that
$\E(\LDS(\pi))\ge \frac{L^*}{2}$.

Now, let $L^* := C\sqrt{\frac{\log n}{\log(1/q)}}$ for a
sufficiently large $C$. First observe that $L^*\ge 2$ by our
assumptions on $q$ and $C$. In addition, note that when $q\ge
\frac{1}{2}$ we have $\log(1/q)\le C'(1-q)$ for some $C'>0$. It
follows that $L^*> \frac{3}{1-q}$ for our range of $q$'s. Thus,
using the second part of \eqref{eq:LDS_greater_than_L},
\begin{equation}\label{eq:LDS_fourth_part_UB}
  \E(\LDS(\pi))\le L^* + n\P(\LDS(\pi)\ge \lceil L^{*}\rceil)\le L^* +
  n^9(C_{10}(1-q))^{\lceil L^* \rceil}
  q^{\frac{\lceil L^* \rceil(\lceil L^* \rceil-1)}{2}}.
\end{equation}
Then, using that $L^*-1\ge \frac{L^*}{2}$ since $L^*\ge 2$,
\begin{equation*}
  q^{\frac{\lceil L^* \rceil (\lceil L^* \rceil-1)}{2}}(1-q)^{\lceil L^* \rceil}\le
  \exp\left(-\log\left(\frac{1}{q}\right)\frac{\lceil L^* \rceil(\lceil L^* \rceil-1)}{2}
  \right) \le \exp\left(-\log\left(\frac{1}{q}\right)\frac{(L^*)^2}{4}
  \right)\le \exp(-10\log n) = \frac{1}{n^{10}}.
\end{equation*}
Plugging into \eqref{eq:LDS_fourth_part_UB} and using our assumption
on $q$,
\begin{equation*}
  \E(\LDS(\pi))\le L^* + \frac{(C_{10})^{\lceil L^*\rceil}}{n}\le L^* + 1.
\end{equation*}
\item Let $n\ge 2$ and $0<q\le \frac{1}{n}$.

By the second part of \eqref{eq:LDS_greater_than_L_lb},
\begin{align*}
  \E(\LDS(\pi))-1&=\sum_{L=2}^n \P(\LDS(\pi)\ge L)\ge \P(\LDS(\pi)\ge
  2)\ge \left(1-\left(1-q^{\frac{2(2-1)}{2}}
  (1-q)^{2}\right)^{\left\lfloor\frac{n}{2}\right\rfloor}\right)
  \ge\\
  &\ge \left(1-\exp\left(-\left\lfloor\frac{n}{2}\right\rfloor
  q(1-q)^2\right)\right)\ge cnq,
\end{align*}
where in the second to last inequality we used the fact that when
$0\le x\le \frac{1}{2}$, $\exp(-x)\le 1-cx$ for some $c>0$. If $2\le
n\le 3$, Proposition~\ref{prop:identity_for_small_q} implies that
\begin{equation*}
  \E(\LDS(\pi))-1\le 2\P(\pi\text{ is not the
  identity})\le cnq.
\end{equation*}
Otherwise, if $n\ge 4$, we may use the second part of
\eqref{eq:LDS_greater_than_L} along with
Proposition~\ref{prop:identity_for_small_q} to obtain
\begin{align*}
  \E(\LDS(\pi))-1&=\sum_{L=2}^n \P(\LDS(\pi)\ge L)\le 3\P(\LDS(\pi)\ge 2) + n\P(\LDS(\pi)\ge 5)\\
  &\le 3\P(\pi\text{ is not the
  identity}) + n\P(\LDS(\pi)\ge 5)\le Cnq + Cn^9q^{10}\le Cnq +
  Cq.\qedhere
\end{align*}
\end{enumerate}
\end{proof}

\section{Variance of the length of monotone subsequences}\label{sec:lis-var}
In this section we prove Proposition~\ref{prop:var_prop}, giving a
bound on the variance of $\LIS(\pi)$ and a Gaussian tail bound for
it.

Fix $n\ge 1, q>0$, and let $(p_i)$ be the $q$-Mallows
  process. Since $p_n\sim\mu_{n, 1/q}$, and $q$ is arbitrary, it
  suffices to show that
  \begin{equation}\label{eq:var_LIS_p_n}
    \var(\LIS(p_n))\le n-1
  \end{equation}
  and, for all $t>0$,
  \begin{equation}\label{eq:LIS_Gaussian_tail}
    \P(|\LIS(p_n)-\E(\LIS(p_n))|> t\sqrt{n-1})< 2e^{-t^2/2}.
  \end{equation}
  Recall from the definition of the Mallows process that $p_n$ is
  determined by the random variables $(p_i(i))$, $2\le i\le n$, and
  that these random variables are independent. Let us define a
  function $f$ by the relation
  \begin{equation*}
    \LIS(p_n) = f(p_2(2), p_3(3),\ldots, p_n(n)).
  \end{equation*}
  We will show that $f$ has the bounded
  differences property. Precisely, that if $x := (x_2,\ldots, x_n)$ and
  $x' := (x_2',\ldots, x_n')$ satisfy $1\le x_i,
  x_i'\le i$ for all $i$ and $x_i = x_i'$ for all but one
  value of $i$, then
  \begin{equation}\label{eq:f_bdd_diff}
    |f(x) - f(x')| \le 1.
  \end{equation}
  This implies \eqref{eq:var_LIS_p_n} and
  \eqref{eq:LIS_Gaussian_tail} by standard facts. To see
  this, define the martingale $L_i:=\E(\LIS(p_n)\,|\,(p_j(j)), j\le
  i)$ for $1\le i\le n$, where we note that $L_1=\E(\LIS(p_n))$ since $p_1(1)$ is constant.
  Then \eqref{eq:f_bdd_diff} and \cite[Theorem 7.4.1]{AloSpe08} imply that
  $|L_{i+1}-L_i|\le 1$ for all $i$, almost surely. Thus, by the
  martingale property,
  \begin{equation*}
    \var(\LIS(p_n)) = \E(L_n - L_1)^2 = \sum_{i=2}^n \E(L_i -
    L_{i-1})^2 \le n-1.
  \end{equation*}
  The tail bound \eqref{eq:LIS_Gaussian_tail} follows from the
  Bernstein-Hoeffding-Azuma inequality \cite[Theorem 7.2.1]{AloSpe08}.

  Let us now prove \eqref{eq:f_bdd_diff}. Let $x, x'$ be as above and suppose that $x_i = x_i'$
  for all $i\neq i_c$, and $x_{i_c}\neq x_{i_c}'$. By symmetry of $x$ and $x'$, it suffices to show that
  \begin{equation}\label{eq:f_bdd_diff_sym}
    f(x')\ge f(x)-1.
  \end{equation}
  Write $(p_i^x)$, $1\le i\le n$, for the first $n$ permutations in the Mallows process which result when $p_1(1) = 1$ and $p_i(i) = x_i$ for $2\le i\le n$. Similarly let $(p_i^{x'})$
  be the first $n$ permutations which result when $p_i(i)=x_i'$ for $2\le i\le n$.
  Recall that, by definition, $\LIS(p_n^x) = f(x)$ and let $1\le i_1<\cdots<i_{f(x)}\le n$
  be the indices of
  an (arbitrary) longest increasing subsequence in $p_n^x$. That is,
  $(i_1,\ldots, i_{f(x)})$ satisfy
  \begin{equation}\label{eq:p_n_x_lis}
    p_n^x(i_{j+1})>p_n^x(i_j)\quad\text{for $1\le j\le f(x)-1$}.
  \end{equation}
  We will make repeated use of the following two facts which follows directly
  from the definition of the Mallows process: For any $i<j$, if
  $p_j(i)<p_j(j)$ then $p_k(i)<p_k(j)$ for all $k\ge j$. In addition, the values of $(p_m(m))$, $i\le m\le j$, determine whether $p_j(i)<p_j(j)$ (this is a special case of
  Fact~\ref{fact:indep-of-orderings-on-blocks}).
  Let us now consider several cases.
  \begin{enumerate}
    \item If $f(x)=1$ then \eqref{eq:f_bdd_diff_sym} is trivial.
    \item If
  $i_c>i_{f(x)-1}$ it follows from \eqref{eq:p_n_x_lis} that $(i_1,\ldots, i_{f(x)-1})$ form the indices of an increasing
  subsequence in $p_n^{x'}$ and hence $f(x')\ge f(x)-1$, proving
  \eqref{eq:f_bdd_diff_sym}. Similarly, if $i_c<i_2$ it follows from \eqref{eq:p_n_x_lis}
  that $(i_2, \ldots, i_{f(x)})$ form the indices of an increasing
  subsequence in $p_n^{x'}$ and hence $f(x')\ge f(x)-1$, again proving
  \eqref{eq:f_bdd_diff_sym}.
  \item Finally, suppose that $i_2\le i_c\le
  i_{f(x)-1}$ and let $2\le j_c\le f(x)-1$ be equal to the maximal integer
  for which $i_{j_c}\le i_c$. In this case, by the aforementioned facts about the Mallows process, we have that each of $(i_1,\ldots,
  i_{j_c-1})$ and $(i_{j_c+1}, \ldots, i_{f(x)})$ form the indices
  of an increasing subsequence in $p_n^{x'}$. Hence, to prove
  \eqref{eq:f_bdd_diff_sym}, it suffices to prove that
  $p_n^{x'}(i_{j_c+1})>p_n^{x'}(i_{j_c-1})$, which is
  equivalent to
  \begin{equation}\label{eq:LIS_skipping_one_element}
    p_{i_{j_c+1}}^{x'}(i_{j_c+1})>p_{i_{j_c+1}}^{x'}(i_{j_c-1}).
  \end{equation}
  Condition~\eqref{eq:p_n_x_lis} implies that
  \begin{equation}\label{eq:p_i_0_x_j_0}
    p_{i_c}^x(i_{j_c-1})<p_{i_c}^x(i_{j_c}).
  \end{equation}
  Now, \eqref{eq:Mallows_iteration_procedure} implies that in a
  Mallows process $(p_i)$,
  $p_i(j) - p_{i-1}(j)$ can change by at most one when $p_i(i)$
  changes. Thus, we deduce from \eqref{eq:p_i_0_x_j_0} that
  \begin{equation*}
    p_{i_c}^{x'}(i_{j_c-1})\le p_{i_c}^x(i_{j_c}).
  \end{equation*}
  By \eqref{eq:Mallows_iteration_procedure} again, since $x_i=x_i'$ for $i_c<i\le i_{j_c}+1$,
  we conclude that
  \begin{equation*}
    p_{i_{j_c+1}}^{x'}(i_{j_c-1})\le p_{i_{j_c+1}}^x(i_{j_c}) <
    p_{i_{j_c+1}}^x (i_{j_c+1}) = x_{i_{j_c+1}} = x'_{i_{j_c+1}} =
    p_{i_{j_c+1}}^{x'} (i_{j_c+1}),
  \end{equation*}
  proving \eqref{eq:LIS_skipping_one_element} and finishing the
  proof of the proposition.\qedhere
\end{enumerate}

\section{Discussion and open questions}\label{sec:open-ques}
A number of interesting directions remain for further research.
\begin{enumerate}

\item (Variance of the $\LIS$ and limiting distribution). A natural next step is to determine the variance of the longest increasing subsequence and its limiting distribution.
By the work of Baik, Deift and Johansson \cite{BaiDeiJoh99} the
variance is of order $n^{1/3}$ and the limiting distribution is
Tracy-Widom when $q=1$. In the case that $0<q<1$ is fixed we expect
the variance to be of order $n$ and the limiting distribution to be
Gaussian. Establishing these last facts should not be difficult
(Proposition~\ref{prop:var_prop} shows one direction for the
variance). It is less clear what the variance and limiting
distribution should be in the intermediate regime of $q$ though it
may at least seem reasonable that the variance decreases with $q$.


The bounds on the displacement obtained in
Theorem~\ref{thm:displacement} show that in the graphical
representation of a Mallows permutation most points lie in a strip
whose width is proportional to $1/(1-q)$ (see
Figure~\ref{fig:mallows-points}). This suggests a possible
connection between the length of the longest increasing subsequence
of a Mallows permutation and the model of last passage percolation
for random points in a strip. The analogy is not perfect, however,
since the points in the graphical representation of the Mallows
measure are correlated. It is not clear whether, asymptotically,
these correlations have a significant effect on the variance and
limiting distribution (see also Question
\ref{question:limits_graphical_repr} below).


Chatterjee and Dey \cite{ChaDey13} investigated \emph{undirected}
first passage percolation in the rectangle $[0,k]\times [0, h_k]$
and conjectured that the first passage time has variance
$kh_k^{-1/2+o(1)}$ and Gaussian limit distribution when $h_k\ll
k^{2/3}$. They proved that the limiting distribution is indeed
Gaussian when $h_k\ll k^{1/3}$ and gave certain evidence for the
full conjecture (as well as similar results in higher dimensions).

Several authors \cite{BaiSui05,BodMar05,Sui06} have investigated
\emph{directed} first and last passage percolation in the rectangle
$[0,k]\times [0, h_k]$. They have shown that when $1\ll h_k \ll
k^{3/7}$ the passage time converges to the Tracy-Widom distribution,
in contrast to the aforementioned results of \cite{ChaDey13} for
undirected first passage percolation. While directed last passage
percolation is more similar to the longest increasing subsequence
model than undirected first passage percolation, the convergence to
the Tracy-Widom law in this result seems related to the fact that
the rectangle considered is horizontal, unlike our diagonal strip.

Thus an intriguing question is which limit distribution appears for
the length of the longest increasing subsequence in the intermediate
regime of $q$, when $q\to 1$ with $n$ at some rate. Is it a
Tracy-Widom distribution as is the case for $q=1$, or is it the
Gaussian distribution as we expect for fixed $0<q<1$, or some other
possibility? Is it the same throughout the entire intermediate
regime?

%

What is the dependence of the variance on $n$ and $q$? Does it have
the asymptotic form $n^a (1-q)^b$, for some $a,b$, as the
expectation does? Possibly, if there are several regimes for the
limiting distribution then there would also be several regimes for
the values of $a$ and $b$ depending on the precise rate at which $q$
tends to $1$ with $n$.

In Section \ref{sec:LIS-Xi-comparison} we have shown that the longest increasing subsequence is close to a sum of i.i.d. random variables corresponding to
the longest increasing subsequences of disjoint blocks of elements. However, our bounds on the error terms in this approximation do not seem to be strong enough to draw useful conclusions on the distribution or variance of the longest increasing subsequence.

\item (RSK correspondence). In prior work on the distribution of the longest increasing subsequence for the uniform distribution, e.g., \cite{LogShe77,VerKer77,BaiDeiJoh99},
the combinatorial bijection known as the Robinson-Schensted-Knuth
(RSK) correspondence between permutations and Young tableaux has
played an important role. A natural question is to study the measure
induced on Young tableaux by the RSK correspondence applied to
Mallows-distributed permutations.

\item \label{question:limits_graphical_repr}(Limits of graphical representation). Consider the graphical
representation of Mallows-distributed permutations as in
Figure~\ref{fig:mallows-points}. Theorem~\ref{thm:displacement} and
the figure suggest that the empirical distribution of the points in
a square of width $\frac{1}{1-q}$ around the diagonal converges to a
limiting density. What is the form of this density? Starr
\cite{Sta09} has answered this question in the regime where $n(1-q)$
tends to a finite constant.

Additionally, what is the local limit of the points in the graphical
representation (the limit when zooming to a scale in which there is
one point per unit area on average)? Is it a Poisson process or does
it have non-trivial correlations?

%

A related question is to understand the joint distribution of displacements beyond the estimates given in Theorem \ref{thm:displacement}.


\item (Law of large numbers for $\LDS$). It remains to establish a law of large numbers for the longest decreasing
subsequence. Extrapolating from the results of Mueller and Starr
\cite{MueSta11}, we expect that the length of the longest decreasing
subsequence multiplied by $\sqrt{1-q}$ converges in probability to
the constant $\pi$, at least when $n(1-q)\to\infty$ and $(\log
n)^2(1-q)\to 0$. See also Remark~\ref{rem:LDS_limit}.

\item (Expected $\LIS$ for fixed $q$). Fix $0<q<1$ and let $\pi_n$ have the $(n,q)$-Mallows distribution.
Corollary~\ref{cor:induced_permutation_Mallows} implies that
\begin{equation*}
  \E(\LIS(\pi_{n+m})) \le \E(\LIS((\pi_{n+m})_{\{1,\ldots, n\}}) +
  \LIS((\pi_{n+m})_{\{n+1,\ldots, n+m\}})) = \E(\LIS(\pi_n)) +
  \E(\LIS(\pi_m)).
\end{equation*}
Thus, by Fekete's subadditive lemma,
\begin{equation*}
  \lim_{n\to\infty}\frac{\E(\LIS(\pi_n))}{n}= \inf_n \frac{\E(\LIS(\pi_n))}{n} =:
  c(q).
\end{equation*}
It would be interesting to find an explicit expression for $c(q)$.
Proposition~\ref{prop:small_q_LIS_exp_bound} shows that $1-q\le
c(q)\le \frac{1}{1+q}$ for all $0<q<1$, which is rather tight for
small $q$. In addition, Theorem~\ref{thm:lp-convergence-lis} and the
above representation of $c(q)$ as an infimum imply that
\begin{equation*}
  \limsup_{q\uparrow1}\frac{c(q)}{\sqrt{1-q}}\le 1.
\end{equation*}
%
%

\item (Improved large deviation bounds). Our large deviation results are not always sharp. For instance, our bound \eqref{eq:LIS_less_than_L_UB} on the lower
tail of $\LIS(\pi)$ can probably be improved. Deuschel and Zeitouni
\cite{DeuZei99} proved that $\P(\LIS(\pi)<c\sqrt{n})$ is
exponentially small in $n$ for a uniform permutation $\pi\in S_n$.
However, substituting $q=1-4/n$ (which one may expect behaves
similarly to the uniform case) in \eqref{eq:LIS_less_than_L_UB}
yields only that $\P(\LIS(\pi)<c\sqrt{n})$ is at most exponentially
small in $\sqrt{n}$. See also the remark at the end of
Section~\ref{sec:erdos-szekeres}.

\end{enumerate}

\bibliographystyle{plain}
\bibliography{all}

\end{document}